\@date \else {\vskip3ex \centering\footnotesize\@date\par\vskip1ex}\fi
\else \@footnotetext{\@setdate}\fi}
\newtheorem{thm}{Theorem}[section]
\newtheorem{cor}[thm]{Corollary}
\newtheorem{lemma}[thm]{Lemma}
\newtheorem{remark}[thm]{Remark}
\newcommand{\R}{{\mathbb{R}}}
\newcommand{\Z}{{\mathbb{Z}}}
\newcommand{\supp}{\operatorname{supp}}
\newcommand{\La}{\triangle}
\newcommand{\bs}{\backslash}
\newcommand{\1}{\partial}
\newcommand{\2}{\overline}
\newcommand{\3}{\varepsilon}
\begin{document}

\title[Singular limits and Properties of Degenerate equations]{Singular limits and properties of solutions of some degenerate elliptic and parabolic equations}

\author[K. M. Hui]{\textbf{Kin Ming Hui}}
\address{\textbf{Kin Ming Hui}:
Institute of Mathematics, Academia Sinica\\
Taipei, 10617, Taiwan, R.O.C.}
\email{kmhui@gate.sinica.edu.tw}

\author[S. Kim]{\textbf{Sunghoon Kim}}
\address{\textbf{Sunghoon Kim}:
Department of Mathematics, School of Natural Sciences, The Catholic University of Korea\\
43 Jibong-ro, Wonmi-gu, Bucheon-si, Gyeonggi-do, 14662, Republic of Korea}
\email{math.s.kim@catholic.ac.kr}

\keywords{Singular limit, degenerate elliptic equation, fast diffusion equation, higher order blow-up rate}

\subjclass[2010]{Primary 35B40 Secondary 35B44, 35J75, 35K55}

\begin{abstract}
Let $n\geq 3$, $0\le m<\frac{n-2}{n}$, $\rho_1>0$, $\beta>\beta_0^{(m)}=\frac{m\rho_1}{n-2-nm}$, $\alpha_m=\frac{2\beta+\rho_1}{1-m}$ and $\alpha=2\beta+\rho_1$. For any $\lambda>0$, we prove the uniqueness of radially symmetric solution $v^{(m)}$ of 
$\La(v^m/m)+\alpha_m v+\beta x\cdot\nabla v=0$, $v>0$, in $\R^n\setminus\{0\}$  which satisfies
$\lim_{|x|\to 0}|x|^{\frac{\alpha_m}{\beta}}v^{(m)}(x)=\lambda^{-\frac{\rho_1}{(1-m)\beta}}$ and obtain higher order estimates of $v^{(m)}$ near the blow-up point $x=0$. We prove that as $m\to 0^+$, $v^{(m)}$ converges uniformly in $C^2(K)$ for any compact subset $K$ of $\R^n\setminus\{0\}$ to the solution $v$ of $\La\log v+\alpha v+\beta x\cdot\nabla v=0$, $v>0$,
in $\R^n\bs\{0\}$, which satisfies $\lim_{|x|\to 0}|x|^{\frac{\alpha}{\beta}}v(x)=\lambda^{-\frac{\rho_1}{\beta}}$. We also prove that if the solution $u^{(m)}$ of $u_t=\Delta (u^m/m)$, $u>0$, in $(\R^n\setminus\{0\})\times (0,T)$ which blows up near $\{0\}\times (0,T)$ at the rate $|x|^{-\frac{\alpha_m}{\beta}}$ satisfies some mild growth condition on $(\R^n\setminus\{0\})\times (0,T)$, then as $m\to 0^+$, $u^{(m)}$  converges 
uniformly in $C^{2+\theta,1+\frac{\theta}{2}}(K)$ for some constant $\theta\in (0,1)$ and any compact subset $K$ of $(\R^n\setminus\{0\})\times (0,T)$ to the solution of $u_t=\La\log u$, $u>0$, in $(\R^n\setminus\{0\})\times (0,T)$. As a consequence of the proof we obtain existence of a unique radially symmetric solution $v^{(0)}$ of $\La \log v+\alpha v+\beta x\cdot\nabla v=0$, $v>0$, in $\R^n\setminus\{0\}$, which satisfies $\lim_{|x|\to 0}|x|^{\frac{\alpha}{\beta}}v(x)=\lambda^{-\frac{\rho_1}{\beta}}$. 
\end{abstract}

\maketitle
\vskip 0.2truein

\setcounter{equation}{0}
\setcounter{section}{0}

\section{Introduction}\label{section-intro}
\setcounter{equation}{0}
\setcounter{thm}{0}

Recently there is a lot of study on the equation \cite{DGL}, \cite{DS1},  \cite{FVWY}, [FW1--4], \cite{Hs3}, \cite{KL}, \cite{PS}, \cite{VW1}, \cite{VW2},
\begin{equation}\label{fde}
u_t=\La \phi_m(u),\quad u>0,
\end{equation}
where 
\begin{equation}\label{phi-m-defn}
\phi_m(u)=\left\{\begin{aligned}
&u^m/m\quad\mbox{ if }m\ne 0\\
&\log u\quad\mbox{ if }m=0\end{aligned}\right.
\end{equation} 
and the associated elliptic equation \cite{DKS}, \cite{Hs2}, \cite{Hs4}, \cite{Hu4}, 
\begin{equation}\label{elliptic-eqn}
\La \phi_m(v)+\alpha_m v+\beta x\cdot\nabla v=0, \quad v>0,
\end{equation}
where $\alpha_m$ and $\beta$ are some constants. Recently P.~Daskalopoulos, M.~del Pino, M. Fila, S.Y.~Hsu, K.M.~Hui, S.~Kim, J.~King, Ki-Ahm Lee, N. Sesum, M.~S\'aez, J. L. Vazquez, M. Winkler, E. Yanagida, E.~DiBenedetto, U.~Gianazza and N.~Liao, etc. have many results on 
\eqref{fde} and \eqref{elliptic-eqn}.
The equation \eqref{fde} appears in many physical models \cite{Ar}, \cite{DK}, \cite{V3} and in the study of Ricci and Yamabe flow on manifolds \cite{DS2}, \cite{H}, \cite{V2}, \cite{W}. When $m>1$, it appears in modelling the evolution of various diffusion processes such as the flow of a gas through a porous medium \cite{Ar}. When $m=1$, \eqref{fde} is the heat equation. When $0<m<1$, \eqref{fde} is the fast diffusion equation. When $n\geq 3$ and $g=u^{\frac{4}{n+2}}dx^2$ is a metric on $\R^n$ which evolves by the Yamabe flow
\begin{equation*}
\frac{\partial g}{\partial t}=-Rg\quad\mbox{ on }(0,T)
\end{equation*}
 where $R(\cdot,t)$ is the scalar curvature of the metric $g(\cdot,t)$, then $u$ satisfies \cite{DS2}, \cite{PS}, \cite{Y},
\begin{equation*}
u_t=\frac{n-1}{m}\Delta u^m\quad\mbox{ in }\R^n\times (0,T),\quad m=\frac{n-2}{n+2}
\end{equation*}
which after rescaling is equivalent to \eqref{fde}. Note that if  $n\ge 3$, $0\le m<\frac{n-2}{n}$, $\beta>0$, $\alpha_m=\frac{2\beta+\rho_1}{1-m}$ and $v^{(m)}$ is a solution of 
\eqref{elliptic-eqn} in $\R^n$ (or $\R^n\setminus\{0\}$), then with $\rho_1=1$ and $T>0$ the rescaled function
\begin{equation}\label{self-similar-soln1}
V^{(m)}(x,t)=\left(T-t\right)^{\alpha_m}v^{(m)}\left((T-t)^{\beta}x\right)
\end{equation}
is a self-similar solution of \eqref{fde} in $\R^n\times (0,T)$ ($(\R^n\setminus\{0\})\times (0,T)$, respectively) which vanishes at time $T$. Since solutions of \eqref{fde} which vanishes at a finite time usually behaves like self-similar solutions of the form \eqref{self-similar-soln1}, in order to understand the behaviour of the solutions of \eqref{fde}, it is important to study the properties of solutions of \eqref{elliptic-eqn}. 

For $m>\frac{(n-2)_+}{n}$, there are lots of studies on the solutions of \eqref{fde} (\cite{DK}, \cite{V3}). However there is not much study on the equations \eqref{fde} and \eqref{elliptic-eqn} for the case $n\ge 3$ and $0\le m<\frac{n-2}{n}$ until recently. This is because there is a big difference on the behaviour of solutions of \eqref{fde} for the case $\frac{(n-2)_+}{n}<m<1$ and the case $n\ge 3$, $0\le m<\frac{n-2}{n}$ \cite{DK}, \cite{HP}, \cite{V1}. For example for any $0\le u_0\in L_{loc}^1(\R^n)$, $u_0\not\equiv 0$, when $\frac{(n-2)_+}{n}<m<1$, there exists (\cite{HP}) a unique global positive smooth solution of \eqref{fde} in $\R^n\times (0,\infty)$ with initial data $u_0$ on $\R^n$. However for $n\ge 3$ and $0\le m<\frac{n-2}{n}$ the Barenblatt solutions \cite{DS1} 
\begin{equation*}
B_k(x,t)=\left(\frac{C_{\ast}}{k+(T-t)^{\frac{2}{n-2-nm}}|x|^2}\right)^{\frac{1}{1-m}}(T-t)^{\frac{n}{n-2-nm}},\qquad C_{\ast}=\frac{2(n-2-mn)}{1-m}, \quad k>0,
\end{equation*} 
satisfy \eqref{fde} in $\R^n\times (0,T)$ and vanishes identically at time $T$. 

For the subcritical case $m<\frac{(n-2)_+}{n}$,  M.~Fila and M.~Winkler [FW1--4] have obtained a lot of subtle phenomena for the solutions of \eqref{fde}. In \cite{FW1} and \cite{FW2} they proved the sharp rate of convergence of solutions of \eqref{fde} in $\R^n$ with $n>4$ and $0<m\le\frac{n-4}{n-2}$ to the Barenblatt solutions as the extinction time is approached. In \cite{FW3} they also proved the rate of convergence of solutions of \eqref{fde} in $\R^n$ to separable solutions of \eqref{fde} when $n>10$ and $0<m<\frac{(n-2)(n-10)}{(n-2)^2-4n+8\sqrt{n-1}}$. In \cite{FW4} they found an explicit dependence of the slow temporal growth rate of solutions of \eqref{fde} in $\R^n$ on the initial spatial growth rate.

Properties of singular solutions of \eqref{fde} are studied by E.~Chasseige, J.L.~Vazquez and M.~Winkler in the papers \cite{CV}, \cite{V4}, \cite{VW1} and \cite{VW2}. Existence of singular solution of \eqref{fde} for the case $\frac{n-2}{n}<m<1$ with initial value a nonnegative Borel measure on $\R^n$ which blows up at a singular set of $\R^n$ is proved by E.~Chasseige and J.L.~Vazquez in \cite{CV}. Finite blow-down or delay regularization behaviour for the solutions of the $2$-dimensional logarithmic diffusion equation 
\eqref{fde} (with $m=0$) was studied in \cite{V4}. Asymptotic oscillating behaviour of singular solutions of \eqref{fde}  in bounded domains of $\R^n$ with $0<m<\frac{n-2}{n}$ and $n\ge 3$  was studied in \cite{VW1} and the evolution of singularities of solutions of \eqref{fde} in bounded domains of $\R^n$ with $0<m<1$ and $n\ge 3$ was studied in \cite{VW2}. 

Another way to study the solutions of \eqref{fde} and \eqref{elliptic-eqn} is to study the singular limit of the solutions of \eqref{fde} and \eqref{elliptic-eqn} as $m\to 0$. Singular limit of solutions of \eqref{fde} in $\R^2\times (0,T)$ as $m\to 0^+$ and in $\Omega\times (0,\infty)$ for any bounded domain $\Omega\subset\R^n$, $n\ge 2$, as $m\to 0$ are proved by K.M.~Hui in \cite{Hu1} and \cite{Hu3}. Singular limit of solutions of \eqref{fde}  in $\R^n\times (0,\infty)$, $n\ge 2$, as $m\to 0^-$ is also proved by K.M.~Hui in \cite{Hu3}.
Singular limit of weak local solutions of \eqref{fde} in $O\times (0,\infty)$ as $m\to 0$ for any open set $O\subset\R^n$ is proved by E.~DiBenedetto, U.~Gianazza and N.~Liao in \cite{DGL}. For $n\ge 3$, $0<m\le\frac{n-2}{n}$  and either $\beta>0$ or $\alpha=0$, 
singular limit of solutions of 
\begin{equation*}
\La(v^m/m)+\alpha v+\beta x\cdot\nabla v=0, \quad v>0,\quad\mbox{ in }\R^n
\end{equation*} 
as $m\to 0^+$ is proved by S.Y.~Hsu in \cite{Hs2}.

In \cite{Hu4} K.M.~Hui proved for any $n\ge 3$, $0<m<\frac{n-2}{n}$, $\rho_1>0$, $\lambda>0$, $\beta\ge\beta_0^{(m)}$ and
\begin{equation}\label{alpha-m-defn}
\alpha_m=\frac{2\beta+\rho_1}{1-m}
\end{equation} 
where
\begin{equation}\label{beta-0-defn}
\beta_0^{(m)}=\frac{m\rho_1}{n-2-nm}
\end{equation}
there exists a radially symmetric solution $v:=v^{(m)}$ of \eqref{elliptic-eqn} in $\R^n\setminus\{0\}$ which satisfies
\begin{equation}\label{blow-up-rate-at-x=0}
\lim_{|x|\to 0}|x|^{\frac{\alpha_m}{\beta}}v(x)=\lambda^{-\frac{\rho_1}{(1-m)\beta}}.
\end{equation} 
In this paper we will prove that as $m\to 0^+$, the radially symmetric solution $v^{(m)}$ of \eqref{elliptic-eqn} in $\R^n\setminus\{0\}$ with $\beta>0$ and $\alpha_m$ given by \eqref{alpha-m-defn} converges uniformly in $C^2(K)$ for any compact subset $K$ of $\R^n\setminus\{0\}$ to the solution $v=v^{(0)}$ of 
\begin{equation}\label{elliptic-log-eqn}
\La \log v+\alpha v+\beta x\cdot\nabla v=0,\quad v>0, \quad  \mbox{ in }\R^n\setminus\{0\},
\end{equation}
which satisfies
\begin{equation}\label{log v-soln-x=0-rate}
\lim_{|x|\to 0}|x|^{\frac{\alpha}{\beta}}v^{(0)}(x)=\lambda^{-\frac{\rho_1}{\beta}}
\end{equation}  
where $\alpha=\alpha_0=2\beta+\rho_1$.
We will also prove that if $u^{(m)}$ is the solution of \eqref{fde} in $(\R^n\setminus\{0\})\times(0,T)$ with $\beta>0$ and $\alpha_m$ given by \eqref{alpha-m-defn} which blows up near $\{0\}\times(0,T)$ at the rate  $|x|^{-\alpha_m/\beta}$, then as $m\to 0^+$, $u^{(m)}$ converges uniformly in $C^{2,1}(K)$ for any compact subset $K$ of $(\R^n\setminus\{0\})\times(0,T)$ to the solution $u$ of 
\begin{equation}\label{log-diffusion-eqn}
u_t=\La\log u,\quad u>0,\quad\mbox{ in }(\R^n\bs\{0\})\times(0,T).
\end{equation}
For any $n\geq 3$, $0\le m<\frac{n-2}{n}$, $\rho_1>0$, $\beta>\beta_0^{(m)}$, $\alpha_m=\frac{2\beta+\rho_1}{1-m}$ and $\lambda>0$, we also prove the uniqueness of radially symmetric solution $v^{(m)}$ of \eqref{elliptic-eqn} in $\R^n\setminus\{0\}$  which satisfies
\eqref{blow-up-rate-at-x=0} and obtain higher order estimates of $v^{(m)}$ near the blow-up point $x=0$.   

Unless stated otherwise we will now assume that $n\ge 3$, $0\le m<\frac{n-2}{n}$, $\rho_1>0$, $\lambda>0$, $\beta\ge\beta_0^{(m)}$ and  $\phi_m$, $\alpha_m$, $\beta_0^{(m)}$, are given by \eqref{phi-m-defn}, \eqref{alpha-m-defn}  and \eqref{beta-0-defn} respectively and $v=v^{(m)}$ is a radially symmetric solution of\eqref{elliptic-eqn} in $\R^n\setminus\{0\}$ which satisfies \eqref{blow-up-rate-at-x=0} for the rest of the paper. We now recall a result of \cite{Hu4}.

\begin{thm}[Theorem 1.1 of \cite{Hu4}]\label{elliptic-eqn-existence-thm}
Let $n\geq 3$, $0<m<\frac{n-2}{n}$, $\rho_1>0$, $\lambda>0$ and $\beta\ge\beta_0^{(m)}$. Then there exists a radially symmetric solution $v=v^{(m)}$ of \eqref{elliptic-eqn} in $\R^n\bs\{0\}$ which satisfies \eqref{blow-up-rate-at-x=0} and
\begin{equation}\label{eq-condition-of-g-lambda-radially-symmetric-solution-0}
(v^{(m)})'(r)\leq 0 \qquad \forall r=|x|>0.
\end{equation}
\end{thm}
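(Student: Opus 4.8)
The plan is to reduce the problem to the radial ODE
\begin{equation}\label{eq-radial-ode}
(\phi_m(v))''+\frac{n-1}{r}(\phi_m(v))'+\alpha_m v+\beta r v'=0,\qquad v>0,\quad r>0,
\end{equation}
and to produce a global positive decreasing solution with the prescribed blow-up at $r=0$ by a shooting argument. Two scalings simplify the bookkeeping: $v(x)\mapsto v(\sqrt\beta\,x)$ reduces to $\beta=1$, and $v(x)\mapsto\mu^{2/(1-m)}v(\mu x)$ sends solutions of \eqref{elliptic-eqn} to solutions while multiplying the constant in \eqref{blow-up-rate-at-x=0} by $\mu^{-\rho_1/((1-m)\beta)}$; so it suffices to build one solution with one admissible value of that constant. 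The exponent $\alpha_m/\beta$ in \eqref{blow-up-rate-at-x=0} is distinguished by the identity $\alpha_m w+\beta r w'\equiv0$ for $w(r)=r^{-\alpha_m/\beta}$: on the profile $v\sim c_0 r^{-\alpha_m/\beta}$ the zeroth order part of \eqref{eq-radial-ode} cancels at leading order, so the behaviour near $r=0$ is governed by $\Delta\phi_m(v)$ balanced against the first correction of $\alpha_m v+\beta r v'$. Carrying this out yields the consistent local expansion $v(r)=c_0 r^{-\alpha_m/\beta}\bigl(1+O(r^{\rho_1/\beta})\bigr)$, in which the coefficient $n-2-m\alpha_m/\beta$ of the dominant Laplacian term appears; a short computation shows $m\alpha_m/\beta\le n-2$ exactly when $\beta\ge\beta_0^{(m)}$, with equality precisely at $\beta=\beta_0^{(m)}$. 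In that borderline case $v=c_0 r^{-\alpha_m/\beta}$ is already an exact solution of \eqref{elliptic-eqn} (both $\Delta\phi_m(v)$ and $\alpha_m v+\beta r v'$ vanish) satisfying \eqref{blow-up-rate-at-x=0} and \eqref{eq-condition-of-g-lambda-radially-symmetric-solution-0}, so one may assume $\beta>\beta_0^{(m)}$ from now on.

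For the construction I would pass to autonomous form through the Emden--Fowler substitution $v(r)=r^{-2/(1-m)}\psi(t)$, $t=\log r$: \eqref{eq-radial-ode} becomes an autonomous second order equation whose only positive constant solution is $\psi\equiv\psi_\ast:=\bigl(\tfrac{2(n-2-nm)}{(1-m)\rho_1}\bigr)^{1/(1-m)}$, the prescribed behaviour at $x=0$ becomes $\psi(t)\sim c_0\,e^{-\rho_1 t/((1-m)\beta)}$ as $t\to-\infty$ (so $\psi\to+\infty$), and $(v^{(m)})'\le0$ becomes $\psi'(t)\le\tfrac{2}{1-m}\psi(t)$, both holding automatically near $t=-\infty$ on the above profile. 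A local analysis at $r=0$ shows that, once the leading constant is normalized, the solutions of \eqref{eq-radial-ode} with this leading behaviour form a one-parameter family, indexed by the coefficient $\delta$ of the first correction term; I would extend each such $v_\delta$ to its maximal interval and shoot on $\delta$, expecting $v_\delta$ to reach $0$ at a finite radius for $\delta$ too small and to lose monotonicity (or blow up) at a finite radius for $\delta$ too large, so that a continuity/connectedness argument produces a value $\delta^\ast$ for which $v_{\delta^\ast}>0$ and $v_{\delta^\ast}'\le0$ on all of $(0,\infty)$; the monotonicity is propagated outward from $r=0$ using \eqref{eq-radial-ode} and the maximum principle, and a final rescaling normalizes the blow-up constant to $\lambda^{-\rho_1/((1-m)\beta)}$, producing $v^{(m)}$ on $\R^n\setminus\{0\}$. (A more PDE-flavoured alternative: solve \eqref{elliptic-eqn} on the annuli $\{1/k<|x|<k\}$ with inner datum $\lambda^{-\rho_1/((1-m)\beta)}|x|^{-\alpha_m/\beta}$ and small outer datum, trap the solutions between the supersolution $C|x|^{-\alpha_m/\beta}$ --- valid since $m\alpha_m/\beta\le n-2$ --- and a matching subsolution carrying the same rate at $0$, let $k\to\infty$, and obtain $(v^{(m)})'\le0$ by comparing $v$ with a translate via the maximum principle.)

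The hard part is the singular point $r=0$. Since $\alpha_m/\beta$ is a resonant exponent, the blow-up constant cannot be read off from a crude dominant balance: the whole first correction must be controlled, and the family of local solutions carrying the leading power $r^{-\alpha_m/\beta}$ has to be pinned down --- e.g.\ by a contraction mapping in a weighted space near $r=0$, or through the invariant-manifold structure of the autonomous system --- in order to know that fixing the constant in \eqref{blow-up-rate-at-x=0} leaves exactly enough freedom to also arrange global existence with $v>0$ and $(v^{(m)})'\le0$. Concretely, the delicate point of the shooting is to prove that the trajectory selected by acceptable behaviour as $t\to+\infty$ genuinely diverges as $t\to-\infty$ at the rate $e^{-\rho_1 t/((1-m)\beta)}$, and not at a faster or slower one; this is where the hypothesis $\beta\ge\beta_0^{(m)}$ --- which, as noted, keeps the dominant Laplacian coefficient $n-2-m\alpha_m/\beta$ nonnegative --- enters essentially.
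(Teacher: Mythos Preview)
This theorem is not proved in the present paper; it is quoted from \cite{Hu4} as background. The paper does, however, reveal the mechanism of the construction in \cite{Hu4} inside the proof of Lemma~\ref{lemma-upper-bound-of-radially-symmetric-solution-1}: one solves the radial ODE as an \emph{initial value problem} on $[1/i,\infty)$ with the explicit data
\[
v_i(1/i)=\lambda^{-\frac{\rho_1}{(1-m)\beta}}\,i^{\alpha_m/\beta},\qquad v_i'(1/i)=-\frac{\alpha_m}{\beta}\,\lambda^{-\frac{\rho_1}{(1-m)\beta}}\,i^{(\alpha_m/\beta)+1},
\]
i.e.\ the data that make $w_i(r)=r^{\alpha_m/\beta}v_i(r)$ satisfy $w_i(1/i)=\lambda^{-\rho_1/((1-m)\beta)}$ and $w_i'(1/i)=0$. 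One then shows directly, via the first-order equation for $z_i=w_i^{-1}\partial_s w_i$ (see \eqref{zi-ineqn1}--\eqref{label-eq-for-z-i}), that $z_i\ge 0$ and $v_i'<0$ on all of $[1/i,\infty)$, obtains the uniform upper bound $w_i(r)\le \lambda^{-\rho_1/((1-m)\beta)}\exp\bigl(C_m\lambda^{\rho_1/\beta}r^{\rho_1/\beta}\bigr)$, and extracts a $C^2_{\rm loc}$ limit as $i\to\infty$. No shooting is needed: the initial conditions are chosen once and for all to pin the blow-up constant, and global existence together with monotonicity follow from a differential inequality for $z_i$.

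Your plan is more elaborate than this, and as written it is a sketch rather than a proof. The shooting on the first-correction coefficient $\delta$ is plausible in principle but both unnecessary and, as you yourself flag, incomplete: you do not carry out the local existence of the one-parameter family near $r=0$, the dichotomy ``$v_\delta$ hits zero'' versus ``$v_\delta$ loses monotonicity,'' or the connectedness argument, and you correctly identify the verification of the exact decay rate $e^{-\rho_1 t/((1-m)\beta)}$ as $t\to-\infty$ as the outstanding difficulty. Your second alternative (annulus BVPs with barriers) is closer in spirit to \cite{Hu4}, but that paper sidesteps the need for a subsolution carrying the precise blow-up rate by working with IVPs rather than BVPs: the inner datum is a value and a slope, not a boundary function, so the rate is imposed exactly from the outset and what remains is a clean a~priori estimate. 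In short, your outline isolates the right issues, but what you call ``the hard part'' is exactly what the approach in \cite{Hu4} is designed to avoid.
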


In this paper we will prove the following main results.

\begin{thm}\label{thm-singular-rate-near-zero-of-solutions-1}
Let $n\ge 3$, $0\le m<\frac{n-2}{n}$, $\rho_1>0$, $\lambda>0$, $\beta>\beta_0^{(m)}$ and  $\phi_m$, $\alpha_m$, $\beta_0^{(m)}$,
be given by \eqref{phi-m-defn}, \eqref{alpha-m-defn}  and \eqref{beta-0-defn} respectively and let $v=v^{(m)}$ be a radially symmetric solution of\eqref{elliptic-eqn} in $\R^n\setminus\{0\}$ which satisfies \eqref{blow-up-rate-at-x=0}. Let $\tilde{w}(r)=r^{\alpha_m/\beta}v(r)$, $\rho=r^{\rho_1/\beta}$ and $\overline{w}(\rho)=\tilde{w}(r)$. Then $\overline{w}$ can be extended to a function in  $C^2([0,\infty))$ by setting 
\begin{equation}\label{w-w'-w''-x=0}
\overline{w}(0)=\lambda^{-\frac{\rho_1}{(1-m)\beta}} ,\quad\overline{w}_{\rho}(0)=A_1\lambda^{-\frac{m\rho_1}{(1-m)\beta}} \qquad \mbox{and} \qquad \overline{w}_{\rho\rho}(0)=A_2\lambda^{-\frac{(2m-1)\rho_1}{(1-m)\beta}}
\end{equation}
where 
\begin{equation*}
A_1=\frac{a_3}{a_2},\qquad A_2=\frac{a_3(ma_3-a_1)}{a_2^2},
\end{equation*}
and 
\begin{equation}\label{a1-2-3-4-defn}
a_1=\frac{(n-2)\beta-2m\alpha_m+\rho_1}{\rho_1}, \qquad a_2=\frac{\beta^2}{\rho_1}, \qquad a_3=\frac{\alpha_m\beta(n-2)-m\alpha_m^2}{\rho_1^2}.
\end{equation} 
Hence
\begin{equation*}
\left\{\begin{aligned}
&\overline{w}(\rho)=\lambda^{-\frac{\rho_1}{(1-m)\beta}}+A_1\lambda^{-\frac{m\rho_1}{(1-m)\beta}}\rho+\frac{A_2}{2}\lambda^{-\frac{(2m-1)\rho_1}{(1-m)\beta}}\rho^2+o\left(\rho^2\right)\quad\mbox{ as }\rho\to 0^+\notag\\
&\overline{w}_{\rho}(\rho)=A_1\lambda^{-\frac{m\rho_1}{(1-m)\beta}}+A_2\lambda^{-\frac{(2m-1)\rho_1}{(1-m)\beta}}\rho+o\left(\rho\right)\qquad\qquad\qquad\mbox{ as }\rho\to 0^+
\end{aligned}\right.
\end{equation*}
or equivalently
\begin{equation}\label{vm-vm'-asymptotic-behaviour}
\left\{\begin{aligned}
&v^{(m)}(r)=r^{-\alpha_m/\beta}\left[\lambda^{-\frac{\rho_1}{(1-m)\beta}}+A_1\lambda^{-\frac{m\rho_1}{(1-m)\beta}}r^{\frac{\rho_1}{\beta}}+\frac{A_2}{2}\lambda^{-\frac{(2m-1)\rho_1}{(1-m)\beta}}r^{\frac{2\rho_1}{\beta}} +o\left(r^{\frac{2\rho_1}{\beta}} \right)\right]\quad\mbox{ as }r\to 0^+\\
&(v^{(m)})'(r)=r^{-(\alpha_m/\beta)-1}\left[-\frac{\alpha_m}{\beta}\lambda^{-\frac{\rho_1}{(1-m)\beta}}-\frac{(2\beta+m\rho_1)}{(1-m)\beta}A_1\lambda^{-\frac{m\rho_1}{(1-m)\beta}}r^{\frac{\rho_1}{\beta}}+o\left(r^{\frac{\rho_1}{\beta}} \right)\right]\quad\mbox{ as }r\to 0^+.
\end{aligned}\right.
\end{equation}
\end{thm}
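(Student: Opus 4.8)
The plan is to reduce everything to a boundary analysis, at $\rho=0$, of a single ordinary differential equation for $\overline{w}$, obtained by composing the two substitutions $\tilde w=r^{\alpha_m/\beta}v$ and $\rho=r^{\rho_1/\beta}$. Abbreviate $k=\alpha_m/\beta$, $\ell=\rho_1/\beta$ and $\mu=\lambda^{-\rho_1/((1-m)\beta)}$, and note that $k(1-m)=2+\ell$ and, since $\beta>\beta_0^{(m)}$, $n-2-mk>0$. Starting from the radial form of \eqref{elliptic-eqn}, $\frac1{r^{n-1}}(r^{n-1}v^{m-1}v_r)_r+\alpha_m v+\beta r v_r=0$, substitute $v=r^{-k}\tilde w$; the identity $\alpha_m v+\beta r v_r=\beta r^{1-k}\tilde w_r$ (which uses $\alpha_m=k\beta$) turns the equation, after multiplying by $r^{n-1}$, into the divergence form $\frac{d}{dr}\big[r^{n-2-mk}\tilde w^{m-1}(r\tilde w_r-k\tilde w)\big]=-\beta r^{n-k}\tilde w_r$. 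Passing to $\rho=r^\ell$, so that $r\tilde w_r=\ell\rho\,\overline{w}_{\rho}$ and $r^2\tilde w_{rr}=\ell^2\rho^2\overline{w}_{\rho\rho}+(\ell^2-\ell)\rho\,\overline{w}_{\rho}$, this becomes, for $\rho>0$,
\begin{multline}\label{eq-ode-for-wbar}
\ell^2\rho^2\overline{w}^{m-1}\overline{w}_{\rho\rho}+\rho_1\overline{w}_{\rho}+\ell(\ell-k)\rho\,\overline{w}^{m-1}\overline{w}_{\rho}\\
+(n-2-mk)\overline{w}^{m-1}\big(\ell\rho\,\overline{w}_{\rho}-k\overline{w}\big)+(m-1)\ell\rho\,\overline{w}^{m-2}\overline{w}_{\rho}\big(\ell\rho\,\overline{w}_{\rho}-k\overline{w}\big)=0,
\end{multline}
whose only singular point is $\rho=0$ and whose degeneracy there is mild: the coefficient of $\overline{w}_{\rho\rho}$ vanishes to order $\rho^2$ while the coefficient of $\overline{w}_{\rho}$ is the nonzero constant $\rho_1$.

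By \eqref{blow-up-rate-at-x=0}, $\overline{w}(\rho)=r^{\alpha_m/\beta}v(r)\to\mu>0$ as $\rho\to0^+$; also $\overline{w}$ is smooth and positive on $(0,\infty)$, so the whole point is to show that $\overline{w}$ extends to a $C^2$ function at $\rho=0$, i.e.\ that $\lim_{\rho\to0^+}\overline{w}_{\rho}$ and $\lim_{\rho\to0^+}\overline{w}_{\rho\rho}$ exist (then, by the mean value theorem, $\overline{w}$ is $C^1$, resp.\ $C^2$, up to $\rho=0$). To obtain the first limit I would set $P:=\overline{w}^{m-1}(\ell\rho\,\overline{w}_{\rho}-k\overline{w})$, observe that $\rho^\gamma P=r^{n-1}v^{m-1}v_r$ with $\gamma:=(n-2-mk)/\ell>0$ and that the divergence form above yields the first order relation $\rho P_\rho+\gamma P=-\beta\overline{w}_{\rho}$, and then combine the integral representation of $P$ this provides with the datum $\overline{w}\to\mu$ — and, for $0<m<\frac{n-2}{n}$, with the one-sided bound $\ell\rho\,\overline{w}_{\rho}\le k\overline{w}$ furnished by the monotonicity \eqref{eq-condition-of-g-lambda-radially-symmetric-solution-0} of Theorem~\ref{elliptic-eqn-existence-thm} (for $m=0$ one argues directly) — to conclude the first-order behaviour $P(\rho)\to-k\mu^m$, equivalently $\rho\,\overline{w}_{\rho}(\rho)\to0$. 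Feeding $\rho\,\overline{w}_{\rho}\to0$ and $\overline{w}\to\mu$ back into \eqref{eq-ode-for-wbar} reduces it to $\ell^2\rho^2\overline{w}^{m-1}\overline{w}_{\rho\rho}=k(n-2-mk)\mu^m-\rho_1\overline{w}_{\rho}+o(1)$; since $\rho^2\overline{w}_{\rho\rho}$ of order larger than a constant would force $\rho\,\overline{w}_{\rho}$ away from $0$, we get $\rho^2\overline{w}_{\rho\rho}\to0$ and hence $\lim_{\rho\to0^+}\overline{w}_{\rho}=\frac{k(n-2-mk)}{\rho_1}\mu^m=A_1\mu^m$ (the last equality is $A_1=k(n-2-mk)/\rho_1$, read off from \eqref{a1-2-3-4-defn}). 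With $\overline{w}\in C^1([0,\delta])$, $\overline{w}(0)=\mu$, $\overline{w}_{\rho}(0)=A_1\mu^m$ in hand, the identical scheme run one order higher (differentiate \eqref{eq-ode-for-wbar} in $\rho$, or repeat the trapping argument for the next derivative) produces $\lim_{\rho\to0^+}\overline{w}_{\rho\rho}$, so $\overline{w}\in C^2([0,\infty))$.

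To pin down the value $\overline{w}_{\rho\rho}(0)$, insert the expansion $\overline{w}(\rho)=\mu+A_1\mu^m\rho+\tfrac12\overline{w}_{\rho\rho}(0)\rho^2+o(\rho^2)$ into \eqref{eq-ode-for-wbar} and match powers of $\rho$: the $\rho^0$-coefficient vanishes automatically (it is exactly the relation defining $A_1$), and the $\rho^1$-coefficient gives $\rho_1\overline{w}_{\rho\rho}(0)=-\mu^{m-1}\,\overline{w}_{\rho}(0)\big[(n-2-mk)(\ell-mk)+(1-m)k\ell+\ell(\ell-k)\big]$. Using the identity $(n-2-mk)(\ell-mk)+(1-m)k\ell+\ell(\ell-k)=(\ell-mk)(n-2-mk+\ell)$ (a short computation) together with $\overline{w}_{\rho}(0)=A_1\mu^m=\tfrac{k(n-2-mk)}{\rho_1}\mu^m$ gives $\overline{w}_{\rho\rho}(0)=-k(n-2-mk)(\ell-mk)(n-2-mk+\ell)\rho_1^{-2}\mu^{2m-1}$, and this equals $A_2\mu^{2m-1}$ by the computation $A_2=a_3(ma_3-a_1)/a_2^2=k(n-2-mk)(mk-\ell)(n-2-mk+\ell)\rho_1^{-2}$ (using $a_1=(n-2-2mk+\ell)/\ell$, $a_2=\beta/\ell$, $a_3=k(n-2-mk)/\ell^2$ from \eqref{a1-2-3-4-defn}). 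This establishes \eqref{w-w'-w''-x=0}, whence the stated Taylor expansions of $\overline{w}$ and $\overline{w}_{\rho}$. Finally, from $v^{(m)}(r)=r^{-k}\overline{w}(\rho)$ and $(v^{(m)})'(r)=r^{-k-1}\big(\ell\rho\,\overline{w}_{\rho}(\rho)-k\overline{w}(\rho)\big)$ with $\rho=r^{\rho_1/\beta}$, substituting the expansions of $\overline{w}$ and $\overline{w}_{\rho}$ and using $\ell-k=-\frac{2\beta+m\rho_1}{(1-m)\beta}$ (and $\mu^m=\lambda^{-m\rho_1/((1-m)\beta)}$, $\mu^{2m-1}=\lambda^{-(2m-1)\rho_1/((1-m)\beta)}$) yields \eqref{vm-vm'-asymptotic-behaviour}.

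The main obstacle is the regularity step in the second paragraph: because \eqref{eq-ode-for-wbar} degenerates at $\rho=0$, the boundary regularity is not handed to us for free, and showing that $\lim_{\rho\to0^+}\overline{w}_{\rho}$ and $\lim_{\rho\to0^+}\overline{w}_{\rho\rho}$ exist requires a careful bootstrap/dichotomy argument that rules out oscillatory or unbounded behaviour of the derivatives near $\rho=0$, leaning both on the a priori datum $\overline{w}\to\mu$ and, when $0<m<\frac{n-2}{n}$, on the monotonicity \eqref{eq-condition-of-g-lambda-radially-symmetric-solution-0}; once that is secured, the rest is bookkeeping with the explicit constants $A_1,A_2$.
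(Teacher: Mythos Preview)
Your overall plan---derive the ODE \eqref{eq-ode-for-wbar}, establish $C^2$ regularity of $\overline{w}$ at $\rho=0$, then match coefficients---is the right shape, and your algebraic identification of $A_1,A_2$ is clean. But the regularity step, which you correctly flag as the main obstacle, has a genuine logical gap. Theorem~\ref{thm-singular-rate-near-zero-of-solutions-1} is stated for an \emph{arbitrary} radially symmetric solution satisfying \eqref{blow-up-rate-at-x=0}, not only for the one constructed in Theorem~\ref{elliptic-eqn-existence-thm}; so you cannot invoke the monotonicity \eqref{eq-condition-of-g-lambda-radially-symmetric-solution-0}, which is a property of that \emph{particular} solution. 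This is not a cosmetic issue: the uniqueness proof (Theorem~\ref{uniqueness-thm}) applies Theorem~\ref{thm-singular-rate-near-zero-of-solutions-1} to two a priori distinct solutions $v_1,v_2$, so monotonicity is unavailable there. The paper instead first proves, for any solution with \eqref{blow-up-rate-at-x=0}, that $\overline{w}_\rho(\rho)>0$ for all $\rho>0$, via a contradiction argument using only the ODE and the datum $\overline{w}(0^+)=\mu$; monotonicity of $v^{(m)}$ is then deduced \emph{a posteriori} (Corollary~\ref{v'<0-cor}).

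Second, your heuristic ``$\rho^2\overline{w}_{\rho\rho}$ of order larger than a constant would force $\rho\overline{w}_\rho$ away from $0$, hence $\rho^2\overline{w}_{\rho\rho}\to0$, hence $\overline{w}_\rho$ converges'' is not a proof: $\rho\overline{w}_\rho\to0$ together with $\rho^2\overline{w}_{\rho\rho}$ bounded does not rule out oscillation of $\overline{w}_\rho$. The paper's route is quite different from a formal bootstrap. Once $\overline{w}_\rho>0$ is known, it passes to $q=1/\overline{w}_\rho$, derives a first-order equation for $q$, proves two-sided bounds $c_1\le q\le c_2$ near $\rho=0$ by a trapping/contradiction argument (again driven by $\overline{w}(0^+)=\mu$), and then applies l'H\^opital to an explicit integral representation of $q$ to nail down $\lim_{\rho\to0^+}q$. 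The second derivative is handled by an analogous trapping argument for $v_1(\rho):=(\overline{w}_\rho(\rho)-\overline{w}_\rho(0))/\rho$. These quantitative trapping arguments are the real content of the theorem; your proposal gestures at them but does not supply them, and the one concrete input you name (monotonicity from Theorem~\ref{elliptic-eqn-existence-thm}) is circular.
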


\begin{thm}\label{uniqueness-thm}
Let $n\geq 3$, $0\leq m<\frac{n-2}{n}$, $\rho_1>0$, $\lambda>0$, $\beta>\beta_0^{(m)}$ and $\alpha_m$ be given by \eqref{alpha-m-defn}. Let $v_1, v_2$ be radially symmetric solutions of\eqref{elliptic-eqn} in $\R^n\setminus\{0\}$  which satisfies \eqref{blow-up-rate-at-x=0}. Then
\begin{equation}\label{eq-uniqueness-of-solution-g-1-and-g-2-w-r-t-initial-constant-lambda}
v_1(r)=v_2(r) \qquad \forall r>0.
\end{equation}
\end{thm}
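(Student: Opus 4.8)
The plan is to reduce the PDE to an ODE via radial symmetry, introduce the change of variables from Theorem \ref{thm-singular-rate-near-zero-of-solutions-1}, and then run a uniqueness argument for the resulting second-order ODE with the asymptotic data prescribed at $\rho=0$. Writing $r=|x|$, a radially symmetric solution $v=v(r)$ of \eqref{elliptic-eqn} satisfies
\begin{equation*}
(\phi_m(v))''+\frac{n-1}{r}(\phi_m(v))'+\alpha_m v+\beta r v'=0,\qquad r>0.
\end{equation*}
Following the statement of Theorem \ref{thm-singular-rate-near-zero-of-solutions-1}, I would set $\tilde w(r)=r^{\alpha_m/\beta}v(r)$, $\rho=r^{\rho_1/\beta}$, and $\overline w(\rho)=\tilde w(r)$; substituting, the equation becomes an autonomous-in-the-coefficients second-order ODE for $\overline w$ on $(0,\infty)$ whose coefficients are explicit in $\rho$ and whose only singular point is $\rho=0$. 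The key structural fact, already supplied by Theorem \ref{thm-singular-rate-near-zero-of-solutions-1}, is that any solution coming from a $v$ satisfying \eqref{blow-up-rate-at-x=0} extends to $\overline w\in C^2([0,\infty))$ with the same prescribed values $\overline w(0)$, $\overline w_\rho(0)$, $\overline w_{\rho\rho}(0)$ depending only on $m,\beta,\rho_1,\lambda$ (through $A_1,A_2$ and $a_1,a_2,a_3$). Thus given $v_1,v_2$ as in the theorem, the associated functions $\overline w_1,\overline w_2$ satisfy the same ODE and agree to second order at $\rho=0$.

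The core of the argument is then a uniqueness statement for this ODE with Cauchy-type data at the singular endpoint $\rho=0$. I would write the ODE in the form $\overline w_{\rho\rho}=F(\rho,\overline w,\overline w_\rho)$ on $(0,\infty)$, check that $F$ is locally Lipschitz in $(\overline w,\overline w_\rho)$ on the relevant region (where $\overline w>0$, which holds near $0$ since $\overline w(0)=\lambda^{-\rho_1/((1-m)\beta)}>0$), and handle the coordinate singularity at $\rho=0$ by a Gronwall/contraction argument on the difference $z=\overline w_1-\overline w_2$. Concretely, set $z=\overline w_1-\overline w_2$; then $z(0)=z_\rho(0)=0$, and near $\rho=0$ one gets an integral inequality of the form
\begin{equation*}
|z(\rho)|+\rho|z_\rho(\rho)|\le C\int_0^\rho \frac{1}{\sigma}\big(|z(\sigma)|+\sigma|z_\sigma(\sigma)|\big)\,d\sigma
\end{equation*}
after integrating the ODE twice and keeping track of the $\rho$-weights produced by the change of variables. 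A weighted Gronwall lemma (or a direct power-series / Banach fixed point argument using that the data match to order $\rho^2$) forces $z\equiv 0$ on a maximal interval $[0,\delta]$; standard ODE uniqueness on $(0,\infty)$ (where $F$ is regular) then propagates $z\equiv 0$ as long as $\overline w_i>0$. Finally, since $v_i=r^{-\alpha_m/\beta}\overline w_i(r^{\rho_1/\beta})>0$ for all $r>0$ by hypothesis, the positivity region is all of $(0,\infty)$, so $\overline w_1\equiv\overline w_2$ on $[0,\infty)$ and hence $v_1\equiv v_2$ on $(0,\infty)$, which is \eqref{eq-uniqueness-of-solution-g-1-and-g-2-w-r-t-initial-constant-lambda}.

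The main obstacle is the analysis at the singular point $\rho=0$: the ODE for $\overline w$ has coefficients that blow up like $\rho^{-1}$ (and possibly $\rho^{-2}$) as $\rho\to 0^+$, so ordinary Picard–Lindelöf does not apply directly and one must exploit the precise matching of $\overline w_1$ and $\overline w_2$ up to second order (Theorem \ref{thm-singular-rate-near-zero-of-solutions-1}) together with a carefully weighted estimate to absorb these singular factors. A secondary technical point is verifying that the nonlinearity $\phi_m(v)=v^m/m$ (or $\log v$ when $m=0$), after the substitution, yields an $F$ that is genuinely locally Lipschitz in the region $\overline w>0$ uniformly on compact $\rho$-intervals away from $0$, so that once $z$ vanishes on an initial segment it continues to vanish; this is routine but must be stated since the case $m=0$ is included. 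I would also note that the monotonicity \eqref{eq-condition-of-g-lambda-radially-symmetric-solution-0}, while not logically needed for uniqueness, is automatically inherited and can be used to keep the solution in the good region if one prefers a dynamical-systems phrasing of the continuation step.
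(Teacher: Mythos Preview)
Your overall plan---reduce to the ODE for $\overline w$, use Theorem~\ref{thm-singular-rate-near-zero-of-solutions-1} to match the data at $\rho=0$, then run a uniqueness argument at the singular endpoint---is reasonable, but the concrete mechanism you propose at $\rho=0$ does not work. The ODE for $\overline w$ (equation \eqref{eq-for-overline-q-corresponding-to-v-not-u}) reads
\[
\overline w_{\rho\rho}=(1-m)\frac{\overline w_\rho^{\,2}}{\overline w}-\frac{a_1}{\rho}\,\overline w_\rho-\frac{a_2}{\rho^2}\,\overline w^{1-m}\overline w_\rho+\frac{a_3}{\rho^2}\,\overline w,
\]
so the coefficient of $\overline w_\rho$ blows up like $\rho^{-2}$, not $\rho^{-1}$: the point $\rho=0$ is an \emph{irregular} singular point of rank~$1$. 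Two consequences. First, your displayed integral inequality with kernel $\sigma^{-1}$ cannot be obtained by ``integrating the ODE twice''; the $\rho^{-2}\overline w_\rho$ term forces a $\sigma^{-2}$ weight that does not disappear. Second, even if an inequality of the form $Y(\rho)\le C\int_0^\rho \sigma^{-1}Y(\sigma)\,d\sigma$ held, Gronwall does \emph{not} force $Y\equiv 0$: every $Y(\rho)=\rho^{C}$ satisfies it. Likewise the power-series / Banach fixed-point alternative fails, since for an irregular singular point the formal expansion is generally only asymptotic (the linearised model $z_{\rho\rho}+\frac{b}{\rho^2}z_\rho=0$ with $b=a_2\overline w(0)^{1-m}>0$ has a one-parameter family $z_\rho=Ce^{-b/\rho}$ of solutions all of which vanish to infinite order at $0$). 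So matching to second order, by itself, is not enough information to close your argument.

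What actually makes uniqueness work---and what the paper exploits---is precisely the sign and strength of that $\rho^{-2}$ term: it produces an \emph{exponential} integrating factor $e^{C/\rho}$ (equivalently $e^{C r^{-\rho_1/\beta}}$ in the $r$ variable) which dominates every polynomial contribution as $\rho\to 0^+$. The paper carries this out in the original variable: after integrating \eqref{elliptic-eqn} once and using Theorem~\ref{thm-singular-rate-near-zero-of-solutions-1} to justify letting the lower limit tend to $0$, one obtains a first-order differential inequality for $(v_1-v_2)_+$ with integrating factor $\exp\bigl(\beta\int_{r_1}^{r}\rho\,v_1(\rho)^{1-m}\,d\rho\bigr)\sim\exp\bigl(c\,r_1^{-\rho_1/\beta}\bigr)$; this super-exponential growth, combined with the $O(r^{2\rho_1/\beta-\alpha_m/\beta})$ bound on $v_1-v_2$ coming from \eqref{vm-vm'-asymptotic-behaviour}, forces $(v_1-v_2)_+\equiv 0$ on a right neighbourhood of $0$ via l'H\^opital. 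If you want to stay in the $\overline w$ variable, the analogous move is to multiply the linearised equation for $z=\overline w_1-\overline w_2$ by $e^{-b/\rho}$ and integrate from $0$; a Laplace-type estimate on $\int_0^\rho e^{-b/\sigma}\sigma^{-2}\,d\sigma$ then closes a contraction. Either way, the key missing ingredient in your proposal is this exponential weight; a polynomial-weight Gronwall cannot see it.
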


\begin{thm}\label{thm-behaviour-of-v-m-at-infty}
Let $n\geq 3$, $\rho_1>0$, $\beta>0$ and $\alpha=2\beta+\rho_1$. Suppose that $v=v^{(0)}$ is a radially symmetric solution of \eqref{elliptic-log-eqn} in $\R^n\setminus B_1$. Then
\begin{equation}\label{eq-limit-of-r-2-u-r-at-infty-00}
\lim_{r\to\infty}r^2v(r)=\frac{2(n-2)}{\alpha-2\beta}.
\end{equation}
\end{thm}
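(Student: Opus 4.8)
\noindent\emph{Proof strategy.} The plan is to reduce \eqref{elliptic-log-eqn} to an autonomous second order ODE of Li\'enard type whose unique relevant rest point equals $\frac{2(n-2)}{\alpha-2\beta}$. Since $v=v^{(0)}$ is radial, writing $r=|x|$ turns \eqref{elliptic-log-eqn} into
\begin{equation*}
(\log v)''+\frac{n-1}{r}(\log v)'+\alpha v+\beta rv'=0,\qquad r>1.
\end{equation*}
I would pass to the logarithmic variable $s=\log r$ and set $y(s)=r^2v(r)$. Using $v=e^{-2s}y$, $v'=e^{-3s}(y'-2y)$, $(\log v)'=e^{-s}\!\left(\tfrac{y'}{y}-2\right)$ and $(\log v)''=e^{-2s}\!\left(2-\tfrac{y'}{y}+\tfrac{y''}{y}-\tfrac{(y')^2}{y^2}\right)$, together with $\alpha-2\beta=\rho_1$, the equation collapses to
\begin{equation*}
\frac{y''}{y}-\frac{(y')^2}{y^2}+(n-2)\frac{y'}{y}-2(n-2)+\rho_1 y+\beta y'=0 .
\end{equation*}
As $v>0$ we may set $z=\log y$, and since $z''=\tfrac{y''}{y}-\tfrac{(y')^2}{y^2}$ this is the Li\'enard equation
\begin{equation*}
z''+f(z)\,z'+g(z)=0,\qquad f(z)=(n-2)+\beta e^{z},\quad g(z)=\rho_1 e^{z}-2(n-2),
\end{equation*}
where $f>0$ (because $n\ge3$, $\beta>0$) and $g$ is strictly increasing with unique zero $z_\ast=\log\frac{2(n-2)}{\rho_1}$, so that $(z-z_\ast)g(z)>0$ for $z\neq z_\ast$. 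It then suffices to show $z(s)\to z_\ast$ as $s\to\infty$, since that yields $r^2v(r)=e^{z(s)}\to e^{z_\ast}=\frac{2(n-2)}{\rho_1}=\frac{2(n-2)}{\alpha-2\beta}$.

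For the convergence I would use the standard Li\'enard energy $E(s)=\tfrac12 z'(s)^2+G(z(s))$ with $G(z)=\int_{z_\ast}^{z}g(\xi)\,d\xi$. Then $E'(s)=z'\bigl(z''+g(z)\bigr)=-f(z)\,z'(s)^2\le0$, so $E$ is non-increasing. The function $G$ is smooth, strictly convex ($G''=\rho_1e^{z}>0$), vanishes together with $G'$ only at $z_\ast$, and is coercive (the term $\rho_1e^{z}$ dominates as $z\to+\infty$ and $-2(n-2)z$ dominates as $z\to-\infty$). Hence $E(s)\le E(s_0)$ forces $z$ and $z'$ to remain bounded for $s\ge s_0$, so the orbit $(z(s),z'(s))$ is precompact and the solution is global (consistent with the hypothesis that $v$ solves \eqref{elliptic-log-eqn} on all of $\R^n\setminus B_1$).

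It remains to identify the $\omega$-limit set. The cleanest route is LaSalle's invariance principle: the $\omega$-limit set is a nonempty compact invariant subset of $\{E'=0\}=\{z'=0\}$, and on $\{z'=0\}$ invariance forces $g(z)=0$, i.e. $z=z_\ast$; hence $z(s)\to z_\ast$. Alternatively, integrating $E'$ gives $\int_{s_0}^{\infty}f(z)z'^2\,ds<\infty$, hence $\int_{s_0}^{\infty}z'^2\,ds<\infty$ since $f\ge n-2>0$; as $z'$ and $z''$ are bounded, $z'^2$ is uniformly continuous, so Barbalat's lemma gives $z'(s)\to0$, whence $G(z(s))\to E_\infty:=\lim_{s\to\infty}E(s)$; if $E_\infty>0$ the set $\{G=E_\infty\}$ consists of two points straddling $z_\ast$, so by connectedness $z(s)$ converges to one of them, say $\bar z\neq z_\ast$, forcing $z''(s)\to-g(\bar z)\neq0$, which contradicts $z'(s)\to0$ with $z$ bounded; hence $E_\infty=0$ and $z(s)\to z_\ast$. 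Undoing $z=\log(r^2v)$, $s=\log r$ gives \eqref{eq-limit-of-r-2-u-r-at-infty-00}.

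The main obstacle is simply carrying out the change of variables that exposes the Li\'enard structure; once the equation is in the form $z''+f(z)z'+g(z)=0$ with $f>0$ and $g$ monotone through its single zero, global asymptotic stability of the rest point is classical. The one delicate point is ruling out $E_\infty>0$ (equivalently, securing precompactness of the orbit for LaSalle), and that is exactly what the coercivity of $G$ delivers.
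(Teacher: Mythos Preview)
Your argument is correct and takes a genuinely different route from the paper. The paper proceeds by first deriving an integral representation for $v'$ (and then for $v$) by integrating the radial ODE once (respectively twice), and uses these to obtain, via separate case analyses according to whether $\beta>\frac{\rho_1}{n-2}$ or $\beta\le\frac{\rho_1}{n-2}$, uniform two-sided bounds $C_1\le r^2v(r)\le C_0$ for $r\ge 1$; once these are in hand, an application of l'H\^opital's rule to the integral representation shows that every subsequential limit of $r^2v(r)$ must solve a quadratic whose only admissible root (given the positive lower bound) is $\frac{2(n-2)}{\alpha-2\beta}$. By contrast, your change of variables $s=\log r$, $z=\log(r^2v)$ uncovers a Li\'enard structure $z''+f(z)z'+g(z)=0$ with $f>0$ and $g$ strictly increasing with a unique zero at $z_\ast$, after which the conclusion follows from the coercive Lyapunov function $E=\tfrac12(z')^2+G(z)$ via LaSalle or Barbalat. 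Your approach is shorter, avoids the $\beta$ case split entirely, and does not require any monotonicity of $v$ (the paper's Lemma~3.5 invokes $v'<0$); on the other hand, the paper's approach yields explicit intermediate bounds on $r^2v(r)$ and an integral identity that are of independent use elsewhere in the paper.
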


\begin{thm}\label{elliptic-singular-limit-thm}
Let $n\geq 3$, $\rho_1>0$, $\lambda>0$, $\beta>0$ and $\alpha=2\beta+\rho_1$. Let $\2{m}_0\in\left(0,\frac{n-2}{n}\right)$ satisfy $\beta\geq \beta_0^{(\2{m}_0)}$. For any $0<m<\2{m}_0$, let $\alpha_m$ be given by \eqref{alpha-m-defn} and let $v^{(m)}$ be the unique radially symmetric solution of \eqref{elliptic-eqn} in $\R^n\setminus\{0\}$  which satisfies \eqref{blow-up-rate-at-x=0} given by Theorem \ref{elliptic-eqn-existence-thm} and Theorem \ref{uniqueness-thm}. Then as $m\to 0^+$, $v^{(m)}$ converges uniformly in $C^2(K)$ for any compact subset $K$ of $\R^n\setminus\{0\}$ to the unique radially symmetric solution $v$ of \eqref{elliptic-log-eqn} which satisfies \eqref{log v-soln-x=0-rate}.
\end{thm}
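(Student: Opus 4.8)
The plan is to deduce Theorem~\ref{elliptic-singular-limit-thm} from uniform‑in‑$m$ a priori estimates, a compactness argument, and the uniqueness statement of Theorem~\ref{uniqueness-thm}, which is valid for $m=0$ as well. By radial symmetry we work in the variable $r=|x|$, and near the blow‑up point in the variables of Theorem~\ref{thm-singular-rate-near-zero-of-solutions-1}: $\rho=r^{\rho_1/\beta}$ and $\overline{w}^{(m)}(\rho)=r^{\alpha_m/\beta}v^{(m)}(r)$. Substituting $\overline{w}^{(m)}$ into \eqref{elliptic-eqn} yields a second‑order ODE $\overline{w}_{\rho\rho}=F_m(\rho,\overline{w},\overline{w}_\rho)$, regular on $(0,\infty)$ as long as $\overline{w}>0$, whose coefficients and whose prescribed data at $\rho=0$, namely $\overline{w}^{(m)}(0)=\lambda^{-\rho_1/((1-m)\beta)}$, $\overline{w}^{(m)}_\rho(0)=A_1\lambda^{-m\rho_1/((1-m)\beta)}$, $\overline{w}^{(m)}_{\rho\rho}(0)=A_2\lambda^{-(2m-1)\rho_1/((1-m)\beta)}$, all converge as $m\to0^+$ to the corresponding objects of the ODE attached to \eqref{elliptic-log-eqn} (recall $\alpha_m\to\alpha=2\beta+\rho_1$ and that $A_1,A_2$ built from \eqref{a1-2-3-4-defn} have finite limits). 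Since each $v^{(m)}$ is defined on all of $\R^n\setminus\{0\}$, each $\overline{w}^{(m)}$ is defined on $[0,\infty)$; an ODE comparison/continuous‑dependence argument then gives, for every $R>0$, bounds on $\overline{w}^{(m)}$ and its first two $\rho$‑derivatives on $[0,R]$ uniform for small $m$, together with $\overline{w}^{(m)}\to\overline{w}^{(0)}$ in $C^2([0,R])$, where $\overline{w}^{(0)}$ solves the limiting ODE with $\overline{w}^{(0)}(0)=\lim_{m\to0^+}\lambda^{-\rho_1/((1-m)\beta)}=\lambda^{-\rho_1/\beta}$. Translating back, this yields $v^{(m)}\to v^{(0)}$ on every set $\{0<|x|\le R_0\}$ and, in particular, positive upper and lower bounds for $v^{(m)}$ on compact subsets of such sets, uniform for small $m$.

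Next, by Theorem~\ref{elliptic-eqn-existence-thm} each $v^{(m)}$ with $m>0$ is nonincreasing in $r$, so the bounds just obtained propagate to give $0<c_K\le v^{(m)}\le C_K$ on every compact $K\subset\R^n\setminus\{0\}$, uniformly for small $m$. On such $K$ the equation \eqref{elliptic-eqn} is then uniformly elliptic with coefficients bounded uniformly in $m$, so interior Schauder estimates furnish uniform $C^{2+\theta}_{loc}(\R^n\setminus\{0\})$ bounds, and by the Arzel\`a--Ascoli theorem every sequence $m_j\to0^+$ has a subsequence along which $v^{(m_j)}\to v_\infty$ in $C^2_{loc}(\R^n\setminus\{0\})$. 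Since $v_\infty$ is bounded above and below by positive constants on compacts, $\phi_{m_j}(v^{(m_j)})\to\log v_\infty$ together with its first two derivatives, so passing to the limit in \eqref{elliptic-eqn} (using $\alpha_{m_j}\to\alpha$) shows that $v_\infty$ is a radially symmetric solution of \eqref{elliptic-log-eqn}; Step~1 identifies its behaviour at $x=0$, i.e.\ $v_\infty$ satisfies \eqref{log v-soln-x=0-rate}. By Theorem~\ref{uniqueness-thm} with $m=0$ such a solution is unique, hence $v_\infty$ does not depend on the subsequence and the whole family $v^{(m)}$ converges in $C^2_{loc}(\R^n\setminus\{0\})$ to it as $m\to0^+$; this simultaneously establishes the existence of the limit solution $v^{(0)}$.

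The main obstacle is Step~1: proving that $\overline{w}^{(m)}$ and its first two derivatives are bounded on a fixed interval $[0,R]$ uniformly in $m$, and that one may legitimately pass to the limit in the ODE they solve, which is singular at $\rho=0$ before the $C^2$ extension of Theorem~\ref{thm-singular-rate-near-zero-of-solutions-1} is invoked. This uniform control near $x=0$ is exactly what delivers at once the compactness up to the blow‑up point, the positive lower bounds on compact subsets of $\R^n\setminus\{0\}$, and the persistence of the precise blow‑up rate \eqref{log v-soln-x=0-rate} in the limit; by contrast the analysis away from $x=0$ is comparatively routine, resting only on the monotonicity \eqref{eq-condition-of-g-lambda-radially-symmetric-solution-0} and standard elliptic regularity.
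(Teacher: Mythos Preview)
Your overall architecture (uniform two--sided bounds on $v^{(m)}$, compactness, passage to the limit in the equation, identification of the blow--up rate, uniqueness via Theorem~\ref{uniqueness-thm} with $m=0$) matches the paper's. The substantive difference, and the place where your argument is incomplete, is precisely the point you flag at the end as ``the main obstacle''. You propose to obtain uniform--in--$m$ $C^2$ control of $\overline{w}^{(m)}$ on $[0,R]$ by an ODE continuous--dependence argument using the data $\overline{w}^{(m)}(0),\overline{w}^{(m)}_\rho(0),\overline{w}^{(m)}_{\rho\rho}(0)$ from Theorem~\ref{thm-singular-rate-near-zero-of-solutions-1}. But the ODE \eqref{eq-for-overline-q-corresponding-to-v-not-u} carries the singular coefficients $a_1/\rho$ and $a_2/\rho^2$, so standard continuous--dependence results do not apply at $\rho=0$; Theorem~\ref{thm-singular-rate-near-zero-of-solutions-1} gives a $C^2$ extension for each \emph{fixed} $m$, not a uniform one. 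Turning that into the uniform statement you need would require redoing the estimates of Lemmas~\ref{lem-limit-of-w-rho-1-over-overline-q--rgo}--\ref{lem-limit-of-overline-q-rho-rho-1-over-overline-q--rgo} with all constants tracked in $m$, which you have not done.

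The paper sidesteps this entirely. Instead of working through the singular point, it proves a separate lemma (Lemma~\ref{lemma-upper-bound-of-radially-symmetric-solution-1}) giving the explicit two--sided bound
\[
\lambda^{-\frac{\rho_1}{(1-m)\beta}}\;\le\;r^{\alpha_m/\beta}v^{(m)}(r)\;\le\;\lambda^{-\frac{\rho_1}{(1-m)\beta}}\exp\!\Bigl(C_m\lambda^{\rho_1/\beta}r^{\rho_1/\beta}\Bigr),\qquad r>0,\ 0<m<m_0,
\]
with $C_m$ bounded as $m\to0^+$. This is obtained by a short differential--inequality argument (a Gr\"onwall--type estimate for $z=\overline{w}^{-1}\overline{w}_s$ in the variable $s=\log r$), and it immediately yields uniform positive upper and lower bounds for $v^{(m)}$ on any annulus $R_1\le|x|\le R_2$ \emph{and} the persistence of the blow--up rate \eqref{log v-soln-x=0-rate} in the limit, simply by letting $m\to0^+$ in the inequality. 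From there the paper gets the derivative bounds by elementary means (mean value theorem plus integrating the radial ODE once, then reading off $(v^{(m)})''$ and $(v^{(m)})'''$ from the equation), rather than Schauder theory; this avoids having to phrase \eqref{elliptic-eqn} as a uniformly elliptic PDE in $m$. Filling your Step~1 with an estimate of this type would make your argument complete.
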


\begin{thm}\label{singular-log-diffusion-eqn-uniqueness-thm}
Let $n\geq 3$, $\beta>0$, $\lambda_1\geq \lambda_2>0$ and $\alpha=2\beta+1$. Suppose that $0\le u_{0,1}\le u_{0,2}\in L_{loc}^{\infty}(\R^n)$. If $u_1$, $u_2\in C((\R^n\bs\{0\})\times (0,T))\cap L^{\infty}_{loc}((\R^n\bs\{0\})\times [0,T))$ are subsolution and supersolution of 
\begin{equation}\label{eq-log-diffusion-equation-in-R-to-n-spacse-bs-zero-324r5}
u_t=\La\log u,\quad u>0,\quad \mbox{ in }(\R^n\bs\{0\})\times(0,T)
\end{equation}
which satisfies
\begin{equation*}
u_i(x,0)=u_{0,i}(x) \quad \mbox{ in }\R^n\quad\forall i=1,2
\end{equation*}
and
\begin{equation}\label{eq-trapping-solution-u-i-between-V-lambda-1-and-V-lambda-2}
V_{\lambda_1}(x,t)\leq u_i(x,t)\leq V_{\lambda_2}(x,t) \qquad \mbox{ in }\left(\R^n\bs\left\{0\right\}\right)\times(0,T)\quad\forall i=1,2
\end{equation}
where
\begin{equation*}
V_{\lambda_i}(x,t)=(T-t)^{\alpha}v_{\lambda_i}\left((T-t)^{\beta}|x|\right)\quad\forall i=1,2
\end{equation*}
and $v_{\lambda_i}$ is the radially symmetric solution of \eqref{elliptic-log-eqn} which satisfies \eqref{log v-soln-x=0-rate}  with $\lambda=\lambda_1$, $\lambda_2$, respectively, then
\begin{equation}\label{u1<u2-ineqn}
u_1\leq u_2 \qquad \mbox{in $\left(\R^n\bs\left\{0\right\}\right)\times(0,T)$}.
\end{equation}
Hence if $u_{0,1}=u_{0,2}$, then $u_1=u_2$ in $\left(\R^n\bs\left\{0\right\}\right)\times(0,T)$.
\end{thm}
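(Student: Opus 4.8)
The plan is to obtain the $L^1$-type comparison by a Kato-inequality argument adapted to the punctured domain, with the trapping hypothesis \eqref{eq-trapping-solution-u-i-between-V-lambda-1-and-V-lambda-2} used exactly to discard the boundary contributions at $x=0$ and at spatial infinity. Set $w=u_1-u_2$ and $P=\log u_1-\log u_2$. Since $u_1$ is a subsolution and $u_2$ a supersolution of \eqref{eq-log-diffusion-equation-in-R-to-n-spacse-bs-zero-324r5}, subtracting the weak formulations gives $\partial_t w\le\Delta P$ in $\mathcal{D}'\bigl((\R^n\setminus\{0\})\times(0,T)\bigr)$. On any compact subset of $(\R^n\setminus\{0\})\times(0,T)$, \eqref{eq-trapping-solution-u-i-between-V-lambda-1-and-V-lambda-2} gives $V_{\lambda_1}\le u_1,u_2\le V_{\lambda_2}$ with $V_{\lambda_1},V_{\lambda_2}$ bounded above and below by positive constants, so the quotient $a:=P/w$ (read as $1/u_1$ when $w=0$) lies between $1/V_{\lambda_2}$ and $1/V_{\lambda_1}$; the equation for $w$ is therefore uniformly parabolic there, $w$ and $P$ have the attendant local Sobolev regularity, and a standard mollification together with Kato's inequality (using $\operatorname{sgn}_+w=\operatorname{sgn}_+P$, which holds because $a>0$) yields
\begin{equation*}
\partial_t w_+\le\Delta P_+\qquad\text{in }\mathcal{D}'\bigl((\R^n\setminus\{0\})\times(0,T)\bigr),
\end{equation*}
where $w_+$ and $P_+=(\log u_1-\log u_2)_+$ both vanish off the set $\{u_1>u_2\}$.

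The next step records two consequences of the trapping. On $\{u_1>u_2\}$, monotonicity of $\log$ and \eqref{eq-trapping-solution-u-i-between-V-lambda-1-and-V-lambda-2} give $0\le P_+\le\bigl(\log(V_{\lambda_2}/V_{\lambda_1})\bigr)_+=\bigl(\log(v_{\lambda_2}(s)/v_{\lambda_1}(s))\bigr)_+$ with $s=(T-t)^{\beta}|x|$. By \eqref{log v-soln-x=0-rate}, $v_{\lambda_i}(r)\sim\lambda_i^{-\rho_1/\beta}r^{-\alpha/\beta}$ as $r\to0$, so $v_{\lambda_2}(s)/v_{\lambda_1}(s)\to(\lambda_1/\lambda_2)^{\rho_1/\beta}$ as $s\to0$; and since $\beta>0$, Theorem \ref{thm-behaviour-of-v-m-at-infty} gives $v_{\lambda_1}$ and $v_{\lambda_2}$ the \emph{same} leading behaviour $\tfrac{2(n-2)}{\rho_1}s^{-2}$ as $s\to\infty$, so $v_{\lambda_2}(s)/v_{\lambda_1}(s)\to1$ as $s\to\infty$. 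Hence $\rho\mapsto\log(v_{\lambda_2}(\rho)/v_{\lambda_1}(\rho))$ is bounded on $(0,\infty)$; consequently $P_+$ is bounded on all of $(\R^n\setminus\{0\})\times(0,T)$ by a constant $M=M(\lambda_1,\lambda_2)$, and moreover $\sup\{P_+(x,t):|x|\ge\rho,\ 0\le t\le t_1\}\to0$ as $\rho\to\infty$ for each $t_1<T$.

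Fix $t_1\in(0,T)$ and $\epsilon>0$, and let $\psi(x)=(\epsilon+|x|^2)^{-(n-2)/2}$: this is smooth, positive, bounded, behaves like $|x|^{2-n}$ at infinity, and is superharmonic, $\Delta\psi=-n(n-2)\epsilon(\epsilon+|x|^2)^{-(n+2)/2}\le0$ with $\int_{\R^n}|\Delta\psi|\,dx<\infty$. Pick cutoffs $\zeta,\eta\in C^{\infty}([0,\infty);[0,1])$ with $\zeta\equiv0$ on $[0,1]$, $\zeta\equiv1$ on $[2,\infty)$, $\eta\equiv1$ on $[0,1]$, $\eta\equiv0$ on $[2,\infty)$, and put $\phi_R(x)=\zeta(R|x|)\psi(x)\eta(|x|/R)$, a valid test function. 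Testing $\partial_t w_+\le\Delta P_+$ against $\phi_R$ over $(0,t_1)$ (after a routine mollification in $t$, and using that $u_{0,1}\le u_{0,2}$ makes the initial term $\int w_+(\cdot,0^+)\phi_R=0$) gives
\begin{equation*}
\int_{\R^n}w_+(x,t_1)\phi_R(x)\,dx\ \le\ \int_0^{t_1}\!\!\int_{\R^n}P_+(x,t)\,\Delta\phi_R(x)\,dx\,dt .
\end{equation*}
On the inner annulus $\{1/R\le|x|\le2/R\}$ one has $|\Delta\phi_R|\lesssim R^2$, $P_+\le M$, and volume $\lesssim R^{-n}$, so this part is $O(R^{2-n})\to0$ because $n\ge3$; on the outer annulus $\{R\le|x|\le2R\}$, $|\Delta\phi_R|\lesssim R^{-n}$, volume $\lesssim R^n$, and $P_+\le\sup\{P_+:|x|\ge R,\ t\le t_1\}=o(1)$, so this part $\to0$ too; on the remaining bulk region $\phi_R=\psi$, and by dominated convergence ($|\Delta\psi|\in L^1$, $P_+\le M$) that part tends to $\int_0^{t_1}\!\int_{\R^n}P_+\Delta\psi\,dx\,dt\le0$. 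Letting $R\to\infty$ and using monotone convergence ($\phi_R\uparrow\psi$ on $\R^n\setminus\{0\}$) on the left yields $\int_{\R^n}w_+(x,t_1)\psi(x)\,dx\le0$, hence $w_+(\cdot,t_1)\equiv0$ since $\psi>0$; that is, $u_1(\cdot,t_1)\le u_2(\cdot,t_1)$. As $t_1\in(0,T)$ was arbitrary, this proves \eqref{u1<u2-ineqn}, and when $u_{0,1}=u_{0,2}$ (so that $u_1,u_2$ are solutions) the same estimate applied to the pair $(u_2,u_1)$ gives $u_2\le u_1$ as well, whence $u_1=u_2$.

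I expect the main obstacle to be the rigorous derivation of the distributional Kato inequality $\partial_t w_+\le\Delta P_+$ for the merely continuous, locally bounded sub/supersolutions $u_1,u_2$, together with the verification that the resulting flux through the singular set $\{x=0\}$ and ``through infinity'' vanishes. Both rest on \eqref{eq-trapping-solution-u-i-between-V-lambda-1-and-V-lambda-2}: the trapping makes the equation uniformly parabolic on compact subsets of the punctured domain, which legitimises the regularization behind Kato's inequality, and it forces $P_+$ to stay bounded as $x\to0$ and to vanish as $|x|\to\infty$ — the latter relying precisely on the coincidence of the leading-order behaviours of $v_{\lambda_1}$ and $v_{\lambda_2}$ from Theorem \ref{thm-behaviour-of-v-m-at-infty}.
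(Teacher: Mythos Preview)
Your proof is correct, and its overall architecture—Kato's inequality applied against a compactly supported test function on the punctured space, with the trapping \eqref{eq-trapping-solution-u-i-between-V-lambda-1-and-V-lambda-2} used to kill the boundary terms at $x=0$ and at infinity—is the same as the paper's. The handling of the inner annulus is essentially identical: both arguments observe that the singular parts of $\log V_{\lambda_1}$ and $\log V_{\lambda_2}$ cancel, so $(\log u_1-\log u_2)_+$ is uniformly bounded near the origin, and the contribution is $O(\epsilon^{n-2})$ (your $O(R^{2-n})$).

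The genuine difference is at spatial infinity. The paper tests against a \emph{flat} cutoff $\eta_R=h(\cdot/R)^4$, which leaves $\int_{\{R\le|x|\le 2R\}}(\log u_1-\log u_2)_+|\Delta\eta_R|$ on the right; with only boundedness of $(\log u_1-\log u_2)_+$ this term is of order $R^{n-2}$ and does not vanish. The paper therefore invokes the lower bound $u_i\ge C_3(T-t)|x|^{-2}$ coming from Theorem~\ref{thm-behaviour-of-v-m-at-infty} and refers to the argument of Lemma~2.5 in \cite{Hu2} (which exploits the $h^4$ structure together with that lower bound) to close the estimate. You instead test against a \emph{superharmonic} weight $\psi=(\epsilon+|x|^2)^{-(n-2)/2}$; this buys two things: the bulk term has the good sign $\int P_+\Delta\psi\le 0$, and the outer-annulus term already scales like $R^{-n}\cdot R^n\cdot o(1)\to 0$ once you use the \emph{full} conclusion of Theorem~\ref{thm-behaviour-of-v-m-at-infty} (that $v_{\lambda_1}$ and $v_{\lambda_2}$ share the same leading order $2(n-2)\rho_1^{-1}s^{-2}$ at infinity) to force $P_+\to 0$ as $|x|\to\infty$. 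So your route is self-contained and avoids the citation to \cite{Hu2}, at the price of using slightly more information from Theorem~\ref{thm-behaviour-of-v-m-at-infty} than the paper does.
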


\begin{thm}\label{parabolic-singular-limit-thm1}
Let $n\geq 3$, $0<\2{m}_0<\frac{n-2}{n}$, $\lambda_1>\lambda_2>0$, $\beta\ge\beta_0^{(\2{m}_0)}$, $\alpha=2\beta+1$ and $T>0$. For any $0<m<\overline{m}_0$, let $\alpha_m$ be given by \eqref{alpha-m-defn} with $\rho_1=1$ and
\begin{equation}\label{self-similar-soln-defn1}
V_{\lambda_i}^{(m)}(x,t)=(T-t)^{\alpha_m}v^{(m)}_{\lambda_i}\left((T-t)^{\beta}x\right)\quad \forall i=1,2
\end{equation}
where $v_{\lambda_i}^{(m)}$ is the radially symmetric solution of \eqref{elliptic-eqn} in $\R^n\bs\{0\}$ which satisfies \eqref{blow-up-rate-at-x=0} with $\lambda=\lambda_1$, $\lambda_2$, respectively. Let $\left\{u_{0,m}\right\}_{0<m<\2{m}_0}\subset L^{\infty}_{loc}\left(\R^n\bs\left\{0\right\}\right)$, $u_{0,m}\geq 0$ for all $0<m<\2{m}_0$, be a family of functions satisfying 
\begin{equation}\label{eq-compare-between-radially-symmetric-sols-and-initial-datas}
V_{\lambda_1}^{(m)}(x,0)\leq u_{0,m}(x)\leq V_{\lambda_2}^{(m)}(x,0) \quad \mbox{ in $\R^n\bs\left\{0\right\}$}
\end{equation}
and
\begin{equation*}
u_{0,m}\to u_0 \quad \mbox{in $L^1_{loc}\left(\R^n\bs\left\{0\right\}\right)$ as $m\to 0^+$}.
\end{equation*}
For any $0<m<\2{m}_0$, let $u^{(m)}$ be a solution of 
\begin{equation}\label{eq-cases-aligned-problem-of-parabolic-case-before-m-to-zero}
\begin{cases}
\begin{aligned}
&\quad u_t=\La(u^m/m),\quad u>0,\quad \mbox{ in }(\R^n\bs\{0\})\times(0,T)\\
&\quad u(x,0)=u_{0,m} \qquad \qquad \quad\mbox{ in }\R^n\bs\{0\}
\end{aligned}
\end{cases}
\end{equation}
given by Theorem 1.7 of \cite{Hu4} which satisfies
\begin{equation}\label{eq-compare-between-radially-symmetric-sols-and-sollutions}
V_{\lambda_1}^{(m)}(x,t)\leq u^{(m)}(x,t)\leq V_{\lambda_2}^{(m)}(x,t) \quad \mbox{ in }(\R^n\bs\{0\})\times(0,T).
\end{equation} 
Then $u^{(m)}$ converges uniformly in $C^{2+\theta,1+\frac{\theta}{2}}(K)$ for some constant $\theta\in (0,1)$ and any  compact subset $K$ of $\left(\R^n\bs\left\{0\right\}\right)\times(0,T)$ to the solution $u$ of 
\begin{equation}\label{eq-cases-aligned-problem-of-parabolic-case-after-m-to-zero}
\begin{cases}
\begin{aligned}
&u_t=\La\log u, u>0,\quad\mbox{ in }(\R^n\bs\{0\})\times (0,T)\\
&u(x,0)=u_0\qquad\qquad\mbox{ in }\R^n\bs\{0\}
\end{aligned}
\end{cases}
\end{equation}
as $m\to 0^+$ and $u$ satisfies
\begin{align}\label{eq-solution-u-trapped-by-V-sub-is}
V_1(x,t)\leq u(x,t)\leq V_2(x,t) \qquad  \mbox{in $\left(\R^n\bs\{0\}\right)\times(0,T)$} 
\end{align}
where
\begin{equation*}
V_i(x,t)=\left(T-t\right)^{\alpha}v_{\lambda_i}\left(\left(T-t\right)^{\beta}|x|\right)=\lim_{m\to 0}V_{\lambda_i}^{(m)}(x,t) \qquad \forall i=1,2
\end{equation*}
and $v_{\lambda_i}$ is the radially symmetric solution of \eqref{elliptic-log-eqn} given by Theorem \ref{elliptic-eqn-existence-thm} which satisfies \eqref{log v-soln-x=0-rate} with $\lambda=\lambda_i$, $i=1,2$, respectively.
\end{thm}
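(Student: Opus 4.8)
The plan is a compactness argument: use the trapping \eqref{eq-compare-between-radially-symmetric-sols-and-sollutions} together with the elliptic singular limit of Theorem \ref{elliptic-singular-limit-thm} to turn \eqref{eq-cases-aligned-problem-of-parabolic-case-before-m-to-zero} into a uniformly parabolic problem on compacta whose structure is controlled independently of $m$, derive uniform interior $C^{2+\theta,1+\theta/2}$ estimates, extract a subsequential $C^{2,1}_{loc}$ limit and identify it as a solution of \eqref{eq-cases-aligned-problem-of-parabolic-case-after-m-to-zero} trapped between $V_1$ and $V_2$, and then invoke the uniqueness Theorem \ref{singular-log-diffusion-eqn-uniqueness-thm} to upgrade subsequential convergence to convergence of the whole family. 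Concretely, fix a compact $K\subset(\R^n\bs\{0\})\times(0,T)$ and a slightly larger compact $K'$. By \eqref{self-similar-soln-defn1}, when $(x,t)\in K'$ the argument $(T-t)^\beta x$ stays in a fixed compact subset of $\R^n\bs\{0\}$; since $\alpha_m\to\alpha$ and, by Theorem \ref{elliptic-singular-limit-thm}, $v^{(m)}_{\lambda_i}\to v_{\lambda_i}$ uniformly there, the barriers $V^{(m)}_{\lambda_i}$ converge uniformly on $K'$ to the continuous positive functions $V_i$. Hence \eqref{eq-compare-between-radially-symmetric-sols-and-sollutions} gives constants $0<c_1<c_2$ (depending on $K'$, not on $m$) with $c_1\le u^{(m)}\le c_2$ on $K'$ for all small $m$.

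Next I would establish the uniform interior regularity. Writing \eqref{eq-cases-aligned-problem-of-parabolic-case-before-m-to-zero} in divergence form $u^{(m)}_t=\operatorname{div}((u^{(m)})^{m-1}\D u^{(m)})$, the coefficient $(u^{(m)})^{m-1}$ lies in $[c_2^{m-1},c_1^{m-1}]$, so its ellipticity ratio is at most $(c_2/c_1)^{1-m}\le c_2/c_1$, uniformly in $m$; De Giorgi--Nash--Moser then yields a Hölder exponent $\theta\in(0,1)$ and a bound for $\|u^{(m)}\|_{C^{\theta,\theta/2}(K)}$ depending only on $n,c_1,c_2,K,K'$. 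I would then pass to $w^{(m)}:=\psi_m(u^{(m)})$ with $\psi_m(s)=(s^m-1)/m$, for which $w^{(m)}_t=(u^{(m)})^{m-1}\Delta w^{(m)}$ with a coefficient that is a uniformly Lipschitz function of $w^{(m)}$ on the relevant range; since $w^{(m)}$ now carries an $m$-independent $C^{\theta,\theta/2}(K)$ bound, interior parabolic Schauder estimates give an $m$-independent bound for $\|w^{(m)}\|_{C^{2+\theta,1+\theta/2}(K)}$, and inverting $\psi_m$ (whose first three derivatives are bounded on $[\psi_m(c_1),\psi_m(c_2)]$ uniformly in $m$) yields an $m$-independent bound for $\|u^{(m)}\|_{C^{2+\theta,1+\theta/2}(K)}$.

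With these bounds I would exhaust $(\R^n\bs\{0\})\times(0,T)$ by compacta and apply Arzelà--Ascoli with a diagonal argument, so that every sequence $m_j\to0^+$ has a subsequence along which $u^{(m_j)}\to\4u$ in $C^{2,1}_{loc}$. Since $\psi_m(s)\to\log s$, $\psi_m'(s)\to s^{-1}$, $\psi_m''(s)\to -s^{-2}$ uniformly on compact subsets of $(0,\infty)$, one gets $\psi_{m_j}(u^{(m_j)})\to\log\4u$ in $C^{2,1}_{loc}$ and hence $\4u_t=\Delta\log\4u$ in $(\R^n\bs\{0\})\times(0,T)$; passing to the limit in \eqref{eq-compare-between-radially-symmetric-sols-and-sollutions} gives $V_1\le\4u\le V_2$, so in particular $\4u\in L^\infty_{loc}((\R^n\bs\{0\})\times[0,T))$. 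For the initial trace I would use the weak formulation of \eqref{eq-cases-aligned-problem-of-parabolic-case-before-m-to-zero}, replacing $\phi_m(u^{(m)})=(u^{(m)})^m/m$ by $\psi_m(u^{(m)})$ (legitimate since $\int\Delta\varphi=0$ for compactly supported test functions $\varphi$): as $\psi_m(u^{(m)})$ is bounded on $\supp\varphi\times[0,\tau]$ uniformly in $m$ by the trapping, this gives $|\int u^{(m)}(\cdot,t)\varphi-\int u_{0,m}\varphi|\le C(\varphi)\,t$ for all small $m$ and $t\in[0,\tau]$; letting $m_j\to0$ and using $u_{0,m}\to u_0$ in $L^1_{loc}$ and $u^{(m_j)}(\cdot,t)\to\4u(\cdot,t)$ for $t>0$, I obtain $\int\4u(\cdot,t)\varphi\to\int u_0\varphi$ as $t\to0^+$, which together with the $m$-uniform $L^\infty_{loc}$ bound and \eqref{eq-compare-between-radially-symmetric-sols-and-initial-datas} identifies $u_0$ (with $0\le u_0\in L^\infty_{loc}$ and $V_1(\cdot,0)\le u_0\le V_2(\cdot,0)$) as the initial trace of $\4u$ in the sense needed for Theorem \ref{singular-log-diffusion-eqn-uniqueness-thm}.

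To conclude, since $V_1=V_{\lambda_1}$ and $V_2=V_{\lambda_2}$ with $\lambda_1>\lambda_2$, any two subsequential limits $\4u,\hat u$ are solutions of \eqref{eq-cases-aligned-problem-of-parabolic-case-after-m-to-zero} with the same data $u_0$ and both satisfy $V_{\lambda_1}\le\4u,\hat u\le V_{\lambda_2}$; applying Theorem \ref{singular-log-diffusion-eqn-uniqueness-thm} twice (with $u_{0,1}=u_{0,2}=u_0$, first $u_1=\4u$, $u_2=\hat u$, then reversed) gives $\4u=\hat u$. Hence the limit is unique, the full family $u^{(m)}$ converges to it, \eqref{eq-solution-u-trapped-by-V-sub-is} holds, and since the $C^{2+\theta,1+\theta/2}$ bounds are $m$-uniform the convergence is in $C^{2+\theta',1+\theta'/2}(K)$ for every $\theta'<\theta$ by interpolation (which is the stated conclusion, with $\theta$ replaced by such a $\theta'$). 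I expect the main obstacle to be precisely this uniformity in $m$, especially near $t=0$: the natural nonlinearity $u^m/m$ blows up like $1/m$, so the time-continuity estimate must be run through $\psi_m(u)=(u^m-1)/m$, and one has to verify that the De Giorgi--Nash--Moser exponent, the parabolic Schauder constants, and the derivative bounds for $\psi_m^{-1}$ are all controlled solely through the $m$-independent bounds $c_1\le u^{(m)}\le c_2$ on compacta supplied by the elliptic singular limit.
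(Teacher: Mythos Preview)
Your proposal is correct and follows essentially the same route as the paper's proof: uniform two-sided bounds from the trapping \eqref{eq-compare-between-radially-symmetric-sols-and-sollutions} and the elliptic singular limit, uniform interior parabolic regularity, Arzel\`a--Ascoli plus diagonalization, passage to the limit in the equation via $\psi_m(s)=(s^m-1)/m\to\log s$, a time-continuity estimate for the initial trace, and finally Theorem \ref{singular-log-diffusion-eqn-uniqueness-thm} to upgrade subsequential to full convergence.

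The differences are only in bookkeeping. The paper obtains the $m$-uniform pointwise bounds on $u^{(m)}$ directly from the explicit estimate of Lemma \ref{lemma-upper-bound-of-radially-symmetric-solution-1} (your route through Theorem \ref{elliptic-singular-limit-thm} works just as well but is slightly less explicit). For the interior regularity the paper simply cites the Schauder estimates in \cite{LSU}; your two-step De Giorgi--Nash--Moser then Schauder-on-$\psi_m(u)$ argument is a legitimate way to make the $m$-uniformity transparent. For the initial trace the paper subtracts the $m$-dependent constant $(T-s)^{m\alpha_m}$ rather than $1$ before dividing by $m$, and then bounds the resulting integrand using the barrier profiles $v^{(m)}_{\lambda_k}$; your choice of subtracting $1$ (which is also legitimate since $\int\Delta\varphi=0$) achieves the same end. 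One small point: from the weak identity $\int\4u(\cdot,t)\varphi\to\int u_0\varphi$ the paper passes to a.e.\ convergence along a subsequence and then invokes dominated convergence to obtain the $L^1_{loc}$ initial trace; you should make that last step explicit rather than leaving it as ``identifies $u_0$ as the initial trace in the sense needed.''
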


\begin{remark}
By Lemma 5.1 of \cite{Hu4} for any $n\ge 3$, $0<m<\frac{n-2}{n}$, and $\lambda_1>\lambda_2>0$, $T>0$, $\beta>\beta_0^m$,  $\alpha_m=\frac{2\beta+1}{1-m}$, $0\le u_0\in L_{loc}^{\infty}(\R^n\setminus\{0\})$, if $u_1$ and $u_2$ are two solutions of 
\begin{equation*}
\begin{cases}
\begin{aligned}
&\quad u_t=\La(u^m/m), u>0,\quad \mbox{ in }(\R^n\bs\{0\})\times(0,T)\\
&\quad u(x,0)=u_0 \qquad \qquad \quad\mbox{in }\R^n\bs\{0\}
\end{aligned}
\end{cases}
\end{equation*}
which satisfies \eqref{eq-compare-between-radially-symmetric-sols-and-sollutions}, then $u_1=u_2$ in 
$(\R^n\setminus\{0\})\times(0,T)$.
\end{remark}

The plan of the paper is as follows.  We will prove Theorem \ref{thm-singular-rate-near-zero-of-solutions-1} and Theorem \ref{uniqueness-thm} in section two.  We will prove Theorem \ref{thm-behaviour-of-v-m-at-infty} in section three and Theorem \ref{elliptic-singular-limit-thm}, Thneorem \ref{singular-log-diffusion-eqn-uniqueness-thm}, and  Theorem \ref{parabolic-singular-limit-thm1} in section four.

We start with some definitions. We say that $u$ is a solution of \eqref{fde} in $(\R^n\setminus\{0\})\times (0,T)$ if $u\in C^{2,1}((\R^n\setminus\{0\})\times (0,T))\cap L_{loc}^{\infty}((\R^n\setminus\{0\})\times (0,T))$ is positive in $(\R^n\setminus\{0\})\times (0,T)$ and satisfies \eqref{fde} in the classical sense in $(\R^n\setminus\{0\})\times (0,T)$. We say that $u$ is a subsolution (supersolution, respectively) of \eqref{fde} in $(\R^n\setminus\{0\})\times (0,T)$ if $u\in C((\R^n\setminus\{0\})\times (0,T))\cap L_{loc}^{\infty}((\R^n\setminus\{0\})\times (0,T))$ is positive in $(\R^n\setminus\{0\})\times (0,T)$ and satisfies 
\begin{equation}
\int_{\R^n}u(x,t_2)\eta(x,t_2)\,dx\le\int_{t_1}^{t_2}\int_{\R^n}(u\eta_t+\phi_m(u)\Delta\eta)\,dx\,dt+\int_{\R^n}u(x,t_1)\eta(x,t_1)\,dx\quad\forall T>t_2>t_1>0
\end{equation} 
($\ge$, respectively) for any $\eta\in C_0^{2,1}((\R^n\setminus\{0\})\times (0,T))$. For any $0\le u_0\in L_{loc}^{\infty}(\R^n\setminus\{0\})$, we say that a solution (or subsolution or supersolution) $u$ of \eqref{fde} in $(\R^n\setminus\{0\})\times (0,T)$ has initial value $u_0$ if $u(\cdot,t)\to u_0$ in $L_{loc}^1(\R^n\setminus\{0\})$ as $t\to 0$. 

We say that $v$ is a solution of \eqref{elliptic-eqn} in $\R^n\setminus\{0\}$ if $u\in C^{2,1}(\R^n\setminus\{0\})$ is positive in $\R^n\setminus\{0\}$ and satisfies \eqref{elliptic-eqn} in the classical sense in $R^n\setminus\{0\}$.  For any $R>0$, let $B_R=\{x\in\R^n:|x|<R\}$.

\section{uniqueness of radially symmetric solutions and higher order estimates at the origin}
\setcounter{equation}{0}
\setcounter{thm}{0}

In this section we will prove the uniqueness of radially symmetric solution $v^{(m)}$ of \eqref{elliptic-eqn} in $\R^n\setminus\{0\}$  which satisfies
\eqref{blow-up-rate-at-x=0} and obtain higher order estimates of $v^{(m)}$ near the blow-up point $x=0$. 

Let $\tilde{w}(r)=r^{\frac{\alpha_m}{\beta}}v^{(m)}(r)$, $\rho=r^{\frac{\rho_1}{\beta}}$ and 
$\overline{w}(\rho)=\tilde{w}(r)$. Then by the proof of Theorem 1.1 of \cite{Hu4}, $\tilde{w}$  satisfies
\begin{align}
&\left(\frac{\tilde{w}_r}{\tilde{w}}\right)_r+\frac{n-1-\frac{2m\alpha_m}{\beta}}{r}\cdot\frac{\tilde{w}_r}{\tilde{w}}+m\left(\frac{\tilde{w}_r}{\tilde{w}}\right)^2+\frac{\beta r^{-1-\frac{\rho_1}{\beta}}\tilde{w}_r}{\tilde{w}^m}=\frac{\alpha_m}{\beta}\cdot\frac{n-2-\frac{m\alpha_m}{\beta}}{r^2}\quad\forall r>0\notag\\
\Rightarrow\quad&\left(\frac{\overline{w}_{\rho}}{\overline{w}}\right)_{\rho}+m\left(\frac{\overline{w}_{\rho}}{\overline{w}}\right)^2+\frac{a_1}{\rho}\cdot\frac{\overline{w}_{\rho}}{\overline{w}}+\frac{a_2}{\rho^2}\cdot\frac{\overline{w}_{\rho}}{\overline{w}^m}=\frac{a_3}{\rho^2}\qquad\qquad\qquad\qquad\qquad\qquad\forall\rho>0
\label{eq-for-overline-q-corresponding-to-v-not-u}
\end{align}
where $a_1$, $a_2$ and $a_3$ are constants given by \eqref{a1-2-3-4-defn}. Note that by \eqref{blow-up-rate-at-x=0},
\begin{equation}\label{q-bar-value-x=0}
\lim_{\rho\to 0^+}\overline{w}(\rho)=\lambda^{-\frac{\rho_1}{(1-m)\beta}}.
\end{equation}
Hence $\2{w}(\rho)$ can be extended to a continuous function on $[0,\infty)$ by letting $\2{w}(0)=\lambda^{-\frac{\rho_1}{(1-m)\beta}}$. 

\begin{lemma}\label{lem-strictly-positivity-of-overline-q-rho-over-zeor-to-infty}
Let $n\ge 3$, $0\le m<\frac{n-2}{n}$, $\rho_1>0$, $\lambda>0$ and $\beta>\beta_0^{(m)}$. Then
\begin{equation}\label{eq-claim-positivity-of-w-=-overline-q-sub-rho-1}
\overline{w}_{\rho}(\rho)>0 \qquad \forall \rho>0
\end{equation}
or equivalently,
\begin{equation*}
v^{(m)}(r)+\frac{\beta}{\alpha_m}r(v^{(m)})'(r)>0 \qquad \forall r>0.
\end{equation*}
Hence
\begin{equation}\label{vm-lower-bd5}
v^{(m)}(r)\ge\lambda^{-\frac{\rho_1}{(1-m)\beta}}r^{-\frac{\alpha_m}{\beta}}\quad \forall r=|x|>0.
\end{equation}
\end{lemma}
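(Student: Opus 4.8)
The plan is to work with the ODE \eqref{eq-for-overline-q-corresponding-to-v-not-u} satisfied by $\overline{w}$ and show that the quantity $q(\rho):=\overline{w}_\rho(\rho)/\overline{w}(\rho)$ cannot vanish or become negative for $\rho>0$. First I would translate the claim: since $\overline{w}>0$, showing $\overline{w}_\rho(\rho)>0$ for all $\rho>0$ is equivalent to $q(\rho)>0$, and unwinding the change of variables $\rho=r^{\rho_1/\beta}$, $\overline{w}(\rho)=r^{\alpha_m/\beta}v^{(m)}(r)$ gives exactly the stated pointwise inequality $v^{(m)}(r)+\tfrac{\beta}{\alpha_m}r(v^{(m)})'(r)>0$; note this is a genuine improvement over \eqref{eq-condition-of-g-lambda-radially-symmetric-solution-0}, which only gives $(v^{(m)})'\le 0$. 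Finally, $\overline{w}_\rho>0$ on $(0,\infty)$ together with the boundary value \eqref{q-bar-value-x=0} forces $\overline{w}(\rho)\ge\lambda^{-\rho_1/((1-m)\beta)}$ for all $\rho\ge0$, which after undoing the substitution is precisely the lower bound \eqref{vm-lower-bd5}. So the whole lemma reduces to the single assertion \eqref{eq-claim-positivity-of-w-=-overline-q-sub-rho-1}.

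For \eqref{eq-claim-positivity-of-w-=-overline-q-sub-rho-1} I would argue by contradiction via a maximum/first-touching argument on the equation \eqref{eq-for-overline-q-corresponding-to-v-not-u}. Rewrite that equation as
\begin{equation*}
q_\rho + m q^2 + \frac{a_1}{\rho}\,q + \frac{a_2}{\rho^2}\,\overline{w}^{\,1-m}\,q = \frac{a_3}{\rho^2}, \qquad q=\frac{\overline{w}_\rho}{\overline{w}},
\end{equation*}
and observe that the constants are positive: $a_2=\beta^2/\rho_1>0$ always, while a short computation using $\alpha_m=(2\beta+\rho_1)/(1-m)$ shows $a_3=\alpha_m\beta(n-2-m\alpha_m/\beta)/\rho_1^2$, which is positive precisely because $\beta>\beta_0^{(m)}=m\rho_1/(n-2-nm)$ forces $n-2-m\alpha_m/\beta>0$ (this is the same sign condition that appears on the right side of the first displayed equation for $\tilde w$). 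Now suppose $\overline{w}_\rho$ is not everywhere positive; since near $\rho=0$ one expects $\overline{w}_\rho\to A_1\lambda^{\cdots}>0$ (or at least $\overline{w}_\rho\ge 0$ can be established from the near-origin analysis / the fact that $\overline{w}$ extends continuously with $\overline w(0)$ equal to the stated value and $v^{(m)}$ is bounded), let $\rho_0>0$ be the first point where $q(\rho_0)=0$. At that point $q_\rho(\rho_0)\le 0$, but the equation gives $q_\rho(\rho_0)=a_3/\rho_0^2>0$, a contradiction; hence $q>0$, i.e. $\overline{w}_\rho>0$, on all of $(0,\infty)$. An alternative, cleaner phrasing: at any zero of $q$ the equation reads $q_\rho=a_3/\rho^2>0$, so $q$ is strictly increasing through every one of its zeros, which means it can have at most one zero and can only cross from $-$ to $+$; combined with the correct sign of $q$ near the origin this rules out any zero at all.

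The main obstacle is establishing the correct behaviour of $\overline{w}_\rho$ as $\rho\to 0^+$ to start the first-touching argument — i.e., knowing that $q$ is nonnegative (indeed positive) for small $\rho$ so that a "first zero'' is well defined. I would handle this by using the continuous extension of $\overline w$ to $[0,\infty)$ with $\overline w(0)=\lambda^{-\rho_1/((1-m)\beta)}>0$ established just before the lemma, together with \eqref{eq-condition-of-g-lambda-radially-symmetric-solution-0}: translating $(v^{(m)})'\le0$ through the change of variables shows $\overline w$ is comparable to a decreasing power of $r$ times $v^{(m)}$, and one checks that the boundedness of $v^{(m)}$ near $0$ forces $q(\rho)=\overline w_\rho/\overline w$ to be bounded below near $0$; if $q$ were negative on a sequence $\rho_j\to0$ one could still run the argument by taking $\rho_0$ to be a local minimum of $q$ where $q<0$ and $q_\rho=0$, again contradicting $q_\rho=a_3/\rho^2>0$ there. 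Once the sign near the origin is pinned down, the rest is the elementary ODE comparison above, and the passage to \eqref{vm-lower-bd5} is just integration of $\overline w_\rho>0$ from $0$.
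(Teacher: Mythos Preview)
Your first-touching idea at a zero of $q=\overline{w}_\rho/\overline{w}$ is correct and elegant: at any $\rho_0$ with $q(\rho_0)=0$ the equation indeed gives $q_\rho(\rho_0)=a_3/\rho_0^2>0$, so $q$ can only cross from negative to positive, and hence if $q>0$ for small $\rho$ then $q>0$ everywhere. This disposes cleanly of what the paper calls the case $\rho_3>0$. The genuine gap is precisely the point you flag as the ``main obstacle'': you never establish that $q>0$ near the origin, and none of your three proposed fixes works. Invoking $\overline{w}_\rho(0^+)=A_1\lambda^{\cdots}>0$ is circular, since that limit is Lemma~\ref{lem-limit-of-w-rho-1-over-overline-q--rgo}, whose proof uses the present lemma. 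Invoking \eqref{eq-condition-of-g-lambda-radially-symmetric-solution-0} is illegitimate here: the standing hypothesis in this section is that $v^{(m)}$ is \emph{any} radially symmetric solution satisfying \eqref{blow-up-rate-at-x=0}, not the particular one constructed in Theorem~\ref{elliptic-eqn-existence-thm}; indeed $(v^{(m)})'<0$ is proved only later (Corollary~\ref{v'<0-cor}), again using the present lemma, and in any case $(v^{(m)})'\le 0$ does not by itself decide the sign of $v^{(m)}+\tfrac{\beta}{\alpha_m}r(v^{(m)})'$. Finally, your local-minimum fallback is simply wrong: at a point with $q(\rho_0)<0$ the equation does \emph{not} read $q_\rho=a_3/\rho_0^2$, because the terms $mq^2$, $(a_1/\rho)q$ and $(a_2/\rho^2)\overline{w}^{1-m}q$ are all present and nonzero, so $q_\rho(\rho_0)=0$ is not excluded.

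What the paper does instead is bypass the near-origin sign question altogether. Assuming $\overline{w}_\rho(\rho_2)\le 0$, it takes the maximal interval $(\rho_3,\rho_2)$ on which $\overline{w}_\rho<0$. The case $\rho_3>0$ is ruled out essentially by your crossing observation (phrased via the integrated quantity $\rho^{a_1}\overline{w}^{m-1}\overline{w}_\rho$). The substantive case is $\rho_3=0$: there the paper integrates the differential inequality $(\phi_m(\overline{w}))_{\rho\rho}\ge a_3\overline{w}(\rho_2)^m\rho^{-2}$ (or its $\rho^{a_1}$-weighted version when $a_1<0$) over $(\rho,\rho_2)$ to obtain a lower bound of the form $\phi_m(\overline{w}(\rho))\ge C+C'\log(\rho_2/\rho)$, which forces $\overline{w}(\rho)\to\infty$ as $\rho\to 0^+$ and contradicts the finite limit \eqref{q-bar-value-x=0}. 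This integration step --- using only \eqref{q-bar-value-x=0}, which is already available --- is exactly the missing ingredient in your argument.
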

\begin{proof}
Suppose that \eqref{eq-claim-positivity-of-w-=-overline-q-sub-rho-1} does not hold. Then there exists a constant $\rho_2>0$ such that
\begin{equation}\label{q-bar-derivative<0}
\overline{w}_{\rho}(\rho_2)\leq 0.
\end{equation}
Since $\beta>\beta_0^{(m)}$, $a_3>0$. Then by \eqref{eq-for-overline-q-corresponding-to-v-not-u} and \eqref{q-bar-derivative<0},
\begin{equation}\label{q-bar-2nd-derivative>0}
\left(\rho^{a_1}\cdot\overline{w}^m\cdot\frac{\overline{w}_{\rho}}{\overline{w}}\right)_{\rho}(\rho_2)=-a_2\rho_2^{a_1-2}\overline{w}_\rho(\rho_2)+a_3\rho_2^{a_1-2}\overline{w}(\rho_2)^m\geq a_3\rho_2^{a_1-2}\overline{w}(\rho_2)^m>0.
\end{equation}
Hence by \eqref{q-bar-derivative<0} and \eqref{q-bar-2nd-derivative>0} there exists a constant $b\in(0,\rho_2)$ such that
\begin{equation*}
\overline{w}_{\rho}(\rho)<0 \qquad \mbox{in $(\rho_2-b,\rho_2)$}.
\end{equation*}
Let $(\rho_3,\rho_2)$, $\rho_3\in [0,\rho_2)$,  be the maximal interval such that
\begin{equation}\label{eq-strictly-less-than-zero-of-overline-q-rho-34}
\overline{w}_{\rho}(\rho)<0 \qquad \forall \rho\in(\rho_3,\rho_2).
\end{equation}
If $a_1\geq 0$, by \eqref{eq-for-overline-q-corresponding-to-v-not-u} and \eqref{eq-strictly-less-than-zero-of-overline-q-rho-34},
\begin{align}
(\phi_m(\overline{w}))_{\rho\rho}(\rho)=&\left(\overline{w}^m\cdot\frac{\overline{w}_{\rho}}{\overline{w}}\right)_{\rho}(\rho)\ge a_3\frac{\overline{w}(\rho)^m}{\rho^2}\ge a_3\frac{\overline{w}(\rho_2)^m}{\rho^2}>0\quad\forall\rho\in(\rho_3,\rho_2)
\label{eq-inequality-of-overline-q-rho-over-overine-q-for-contradiction-3r1-1}\\
\Rightarrow\quad (\phi_m(\2{w}))_{\rho}(\rho_2)\ge&(\phi_m(\2{w}))_{\rho}(\rho)+a_3\2{w}^m(\rho_2)\left(\frac{1}{\rho}-\frac{1}{\rho_2}\right) \qquad\qquad\forall \rho\in(\rho_3,\rho_2)\notag\\
\Rightarrow \qquad\,\,\,\phi_m(\overline{w}(\rho))\ge&\phi_m(\2{w}(\rho_2))+\left(\phi_m(\2{w})_{\rho}(\rho_2)+a_3\rho_2^{-1}\overline{w}^m(\rho_2)\right)(\rho-\rho_2)+a_3\2{w}^m(\rho_2)\log \left(\frac{\rho_2}{\rho}\right)\quad\forall \rho_3<\rho<\rho_2 
\label{eq-inequality-of-overline-q-rho-over-overine-q-for-contradiction-3r1}.
\end{align}
If $a_1<0$, then by \eqref{eq-for-overline-q-corresponding-to-v-not-u} and \eqref{eq-strictly-less-than-zero-of-overline-q-rho-34},
\begin{align}
&(\rho^{a_1}(\phi_m(\overline{w}))_{\rho})_{\rho}=\left(\rho^{a_1}\cdot\overline{w}^m\cdot\frac{\overline{w}_{\rho}}{\overline{w}}\right)_{\rho}(\rho)\geq a_3\rho^{a_1-2}\overline{w}(\rho)^m\geq a_3\overline{w}(\rho_2)^m\rho^{a_1-2}>0 \qquad \forall\rho\in (\rho_3,\rho_2)
\label{eq-inequality-of-overline-q-rho-over-overine-q-for-contradiction-3r2-1}\\
&\qquad \Rightarrow\qquad\quad\rho_2^{a_1}(\phi_m(\overline{w}))_{\rho}(\rho_2)\ge\rho^{a_1}(\phi_m(\overline{w}))_{\rho}(\rho)+\frac{a_3\overline{w}(\rho_2)^{m}}{1-a_1}\left(\rho^{a_1-1}-\rho_2^{a_1-1}\right)\quad\forall \rho_3<\rho<\rho_2 \notag\\
&\qquad \Rightarrow\quad\,\,\rho_2^{a_1}(\phi_m(\overline{w}))_{\rho}(\rho_2)\rho^{-a_1}\ge(\phi_m(\overline{w}))_{\rho}(\rho)
+\frac{a_3\overline{w}(\rho_2)^{m}}{1-a_1}\left(\rho^{-1}-\rho_2^{a_1-1}\rho^{-a_1}\right)\quad\forall \rho_3<\rho<\rho_2 \notag\\
&\qquad \Rightarrow\qquad  \phi_m(\overline{w}(\rho))\ge\phi_m(\overline{w}(\rho_2))+C_1(\rho^{1-a_1}-\rho_2^{1-a_1})+\frac{a_3\overline{w}(\rho_2)^{m}}{1-a_1}\log (\rho_2/\rho)\quad\forall \rho_3<\rho<\rho_2 
\label{eq-inequality-of-overline-q-rho-over-overine-q-for-contradiction-3r2}.
\end{align}
where 
\begin{equation*}
C_1=\frac{1}{1-a_1}\left(\rho_2^{a_1}(\phi_m(\overline{w}))_{\rho}(\rho_2)+\frac{a_3\rho_2^{a_1-1}\overline{w}(\rho_2)^{m}}{1-a_1}\right).
\end{equation*}
If $\rho_3=0$, then by \eqref{eq-inequality-of-overline-q-rho-over-overine-q-for-contradiction-3r1} and \eqref{eq-inequality-of-overline-q-rho-over-overine-q-for-contradiction-3r2},
\begin{equation*}
\overline{w}(\rho)\to\infty \qquad \mbox{as $\rho\to 0^+$}
\end{equation*}
which contradicts \eqref{q-bar-value-x=0}. Hence $\rho_3>0$ and 
\begin{equation}\label{eq-condition-fo-overline-q-at-rho-3-equal-to-zeor}
\overline{w}_{\rho}(\rho_3)=0.
\end{equation} 
By \eqref{eq-inequality-of-overline-q-rho-over-overine-q-for-contradiction-3r1-1}, \eqref{eq-inequality-of-overline-q-rho-over-overine-q-for-contradiction-3r2-1} and \eqref{eq-condition-fo-overline-q-at-rho-3-equal-to-zeor}, 
\begin{equation*}
\overline{w}_{\rho}(\rho)>0\quad\forall\rho\in(\rho_3,\rho_2)
\end{equation*}
which contradicts \eqref{eq-strictly-less-than-zero-of-overline-q-rho-34}. Hence no such $\rho_2>0$ exists and \eqref{eq-claim-positivity-of-w-=-overline-q-sub-rho-1} follows. By \eqref{q-bar-value-x=0} and \eqref{eq-claim-positivity-of-w-=-overline-q-sub-rho-1},  \eqref{vm-lower-bd5} follows.
\end{proof}

\begin{lemma}\label{lem-limit-of-w-rho-1-over-overline-q--rgo}
Let $n\ge 3$, $0\le m<\frac{n-2}{n}$, $\rho_1>0$, $\lambda>0$ and $\beta>\beta_0^{(m)}$.  Then
\begin{equation}\label{eq-limit-of-w-rho-1-over-overline-q--rgo-to-a-2-aiver-a-3}
\lim_{\rho\to 0^+}\overline{w}_{\rho}(\rho)=\frac{a_3}{a_2}\lambda^{-\frac{m\rho_1}{(1-m)\beta}}=\frac{\alpha_m\beta(n-2)-m\alpha^2_m}{\beta^2\rho_1}\cdot\lambda^{-\frac{m\rho_1}{(1-m)\beta}}
\end{equation}
and
\begin{equation}\label{v-m-derivative-at-0}
\lim_{r\to 0^+}r^{\frac{\alpha_m}{\beta}+1}(v^{(m)})'(r)=-\frac{\alpha_m}{\beta}\lambda^{-\frac{\rho_1}{(1-m)\beta}}
\end{equation}
where $a_1$, $a_2$ and $a_3$ are constants given by \eqref{a1-2-3-4-defn}.
Hence $\overline{w}$ can be extended to a function in $C^1([0,\infty))$ by letting $\overline{w}(0)=\lambda^{-\frac{\rho_1}{(1-m)\beta}}$ and $\overline{w}_{\rho}(0)=\frac{a_3}{a_2}\lambda^{-\frac{m\rho_1}{(1-m)\beta}}$. 
\end{lemma}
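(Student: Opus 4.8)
The plan is to work throughout with equation \eqref{eq-for-overline-q-corresponding-to-v-not-u} alone, rewritten in terms of the logarithmic derivative $q:=\overline{w}_\rho/\overline{w}$, and to exploit the sign structure of its coefficients. Multiplying \eqref{eq-for-overline-q-corresponding-to-v-not-u} by $\rho^2$ gives
\[
\rho^2 q_\rho+m\rho^2 q^2+a_1\rho q+a_2\,\overline{w}^{1-m}q=a_3,
\]
where $a_2=\beta^2/\rho_1>0$ and, as noted in the proof of Lemma~\ref{lem-strictly-positivity-of-overline-q-rho-over-zeor-to-infty}, $a_3>0$ since $\beta>\beta_0^{(m)}$. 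By Lemma~\ref{lem-strictly-positivity-of-overline-q-rho-over-zeor-to-infty} we have $q>0$, and by \eqref{q-bar-value-x=0} we have $\overline{w}(\rho)\to w_0:=\lambda^{-\rho_1/((1-m)\beta)}$ as $\rho\to 0^+$; in particular $\overline{w}$ is bounded and $\overline{w}\ge w_0$ on a fixed interval $(0,\rho_0]$.

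First I would show that $\overline{w}_\rho=q\overline{w}$ is bounded near $\rho=0$. Shrinking $\rho_0$ so that $|a_1|\rho_0\le\tfrac12 a_2 w_0^{1-m}$, the displayed identity (dropping the nonnegative term $m\rho^2q^2$ and using $\overline{w}^{1-m}\ge w_0^{1-m}$) gives $\rho^2 q_\rho\le a_3-\tfrac12 a_2 w_0^{1-m}q$ on $(0,\rho_0]$; hence $q_\rho<0$ wherever $q\ge A_0:=2a_3/(a_2 w_0^{1-m})$, and $q_\rho\le-a_3/\rho^2$ wherever $q\ge 2A_0$. If $\limsup_{\rho\to 0^+}q=\infty$, pick $\rho_1\in(0,\rho_0)$ with $q(\rho_1)>2A_0$; then $q$ stays above $2A_0$ on all of $(0,\rho_1)$ (it cannot descend to $2A_0$ as $\rho$ decreases, since $q_\rho<0$ there), so $q_\rho\le-a_3/\rho^2$ there and therefore $q(\rho)\ge a_3/\rho+O(1)\ge a_3/(2\rho)$ for $\rho$ small. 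But then $\overline{w}(\rho_1)-\overline{w}(\rho)=\int_\rho^{\rho_1}q\,\overline{w}\,ds\ge w_0\int_\rho^{\rho_1}q\,ds$ diverges logarithmically as $\rho\to 0^+$, contradicting $\overline{w}(\rho)\to w_0$. Hence $q$, and so $\overline{w}_\rho$, is bounded on $(0,\rho_0]$.

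Next, set $\xi:=a_2\overline{w}^{1-m}q-a_3=a_2\overline{w}^{-m}\overline{w}_\rho-a_3$, which is bounded on $(0,\rho_0]$ by the previous step, and observe that the claim $\overline{w}_\rho\to\tfrac{a_3}{a_2}w_0^m$ is precisely $\xi\to 0$. Differentiating and substituting $\overline{w}_\rho=q\overline{w}$ and the displayed expression for $\rho^2q_\rho$ yields
\[
\rho^2\xi_\rho=-a_2\,\overline{w}^{1-m}\,\xi+g(\rho),\qquad g(\rho)=a_2\,\overline{w}^{1-m}\bigl[(1-2m)\rho^2q^2-a_1\rho q\bigr],
\]
and $g(\rho)\to 0$ as $\rho\to 0^+$ because $q$ and $\overline{w}$ are bounded. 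The same barrier reasoning now applies to $\xi$: if $\limsup_{\rho\to 0^+}\xi=2\delta>0$, choose $\rho_1$ small with $\xi(\rho_1)>\tfrac32\delta$ and $|g|\le\tfrac12 a_2 w_0^{1-m}\delta$ on $(0,\rho_1]$; then wherever $\xi\ge\delta$ one has $\rho^2\xi_\rho\le-\tfrac12 a_2 w_0^{1-m}\delta<0$, so $\xi$ stays above $\tfrac32\delta$ and increases without bound as $\rho\to 0^+$, contradicting boundedness of $\xi$. Applying this to $-\xi$ gives $\liminf_{\rho\to 0^+}\xi\ge 0$, so $\xi\to 0$, and hence $q\to a_3/(a_2 w_0^{1-m})$ and $\overline{w}_\rho=q\overline{w}\to\tfrac{a_3}{a_2}w_0^m=\tfrac{a_3}{a_2}\lambda^{-\frac{m\rho_1}{(1-m)\beta}}$; the second expression in \eqref{eq-limit-of-w-rho-1-over-overline-q--rgo-to-a-2-aiver-a-3} follows at once from \eqref{a1-2-3-4-defn}.

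Finally, from $v^{(m)}(r)=r^{-\alpha_m/\beta}\tilde w(r)$ one gets $r^{\frac{\alpha_m}{\beta}+1}(v^{(m)})'(r)=r\tilde w_r(r)-\tfrac{\alpha_m}{\beta}\tilde w(r)$, and since $\rho=r^{\rho_1/\beta}$ we have $r\tilde w_r(r)=\tfrac{\rho_1}{\beta}\rho\,\overline{w}_\rho(\rho)\to 0$ as $r\to 0^+$ (because $\overline{w}_\rho$ is bounded near $0$ and $\rho\to 0$), while $\tilde w(r)=\overline{w}(\rho)\to w_0$; this gives \eqref{v-m-derivative-at-0}. The $C^1$ extension is then the standard fact that $\overline{w}\in C^1((0,\infty))$, being continuous at $0$ with $\overline{w}(0)=w_0$ and with $\overline{w}_\rho$ having a finite limit at $0$, is differentiable at $0$ with $\overline{w}_\rho(0)=\tfrac{a_3}{a_2}\lambda^{-\frac{m\rho_1}{(1-m)\beta}}$ and $\overline{w}_\rho$ continuous there, so $\overline{w}\in C^1([0,\infty))$. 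The main obstacle is the two barrier arguments above: in each one must choose the thresholds and the neighbourhood of $0$ so that the sign of $\rho^2 q_\rho$ (respectively $\rho^2\xi_\rho$) is controlled, and then extract the $1/\rho$-type blow-up that contradicts the boundedness of $\overline{w}$ (respectively of $\xi$); once the boundedness of $\overline{w}_\rho$ near $0$ is in hand, the rest is routine.
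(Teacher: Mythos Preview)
Your proof is correct and takes a genuinely different route from the paper's. The paper defines $q(\rho)=1/\overline{w}_\rho(\rho)$, first traps it between $\tfrac{a_2}{8a_3}\lambda^{m\rho_1/((1-m)\beta)}$ and $\tfrac{2^{1+m}a_2}{a_3}\lambda^{m\rho_1/((1-m)\beta)}$ via two separate barrier arguments (one for each side), then writes an explicit integrating-factor representation for $q$ and applies l'Hospital's rule to a quotient whose numerator and denominator both diverge as $\rho\to 0^+$ in order to pin down the limit. You instead work with the logarithmic derivative $q=\overline{w}_\rho/\overline{w}$, obtain boundedness from a single upper barrier (the lower bound $q>0$ being Lemma~\ref{lem-strictly-positivity-of-overline-q-rho-over-zeor-to-infty}), and then introduce $\xi:=a_2\overline{w}^{1-m}q-a_3$, which satisfies the linear-in-$\xi$ equation $\rho^2\xi_\rho=-a_2\overline{w}^{1-m}\xi+g$ with forcing $g\to 0$; the limit $\xi\to 0$ is then a clean stability argument of the same barrier type. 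Your approach avoids both the integral representation and l'Hospital's rule and is arguably more direct; the paper's route, on the other hand, yields an explicit two-sided a priori bound on $\overline{w}_\rho$ as a byproduct, which is of some independent use.
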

\begin{proof}
Let 
\begin{equation*}
q(\rho)=\frac{1}{\overline{w}_{\rho}(\rho)}.
\end{equation*}
By \eqref{eq-for-overline-q-corresponding-to-v-not-u} $q(\rho)$ satisfies
\begin{equation}\label{eq-eq-for-w-with-respect-to-v}
q_{\rho}(\rho)=-\frac{(1-m)}{\overline{w}(\rho)}+\frac{q(\rho)}{\rho}\left[a_1+\frac{\overline{w}(\rho)}{\rho}\left(\frac{a_2}{\overline{w}(\rho)^m}-a_3q(\rho)\right)\right] \quad \forall \rho>0.
\end{equation}
By Lemma \ref{lem-strictly-positivity-of-overline-q-rho-over-zeor-to-infty},
\begin{equation}\label{eq-positivity-of-w-with-respect-to-v-for-allo-rho}
q(\rho)>0 \quad \forall \rho>0.
\end{equation}
By \eqref{q-bar-value-x=0} and \eqref{eq-claim-positivity-of-w-=-overline-q-sub-rho-1} there exists a constant $\rho_2>0$ such that
\begin{equation}\label{eq-lower-bound-of-overline-q-in-the-nbd-of-zero}
\lambda^{-\frac{\rho_1}{(1-m)\beta}}<\overline{w}(\rho)<2\lambda^{-\frac{\rho_1}{(1-m)\beta}} \qquad \forall 0<\rho<\rho_2.
\end{equation}
We now claim that there exists a constant $\rho_0>0$ such that 
\begin{equation}\label{eq-claim-bounds-of-w-rho-upper-and-lower}
 \frac{a_2}{8a_3}\lambda^{\frac{m\rho_1}{(1-m)\beta}}\leq q(\rho)\leq \frac{2^{1+m}a_2}{a_3}\lambda^{\frac{m\rho_1}{(1-m)\beta}} \qquad \forall 0<\rho<\rho_0.
\end{equation}
\indent To prove the inequality on the right hand side of \eqref{eq-claim-bounds-of-w-rho-upper-and-lower}, we first suppose that the inequality on the right hand side of \eqref{eq-claim-bounds-of-w-rho-upper-and-lower} does not hold for any $\rho_0>0$. Then there exists a constant 
\begin{equation}\label{eq-assumption-of-w-above-some-boundes}
0<\rho_3<\min\left\{\rho_2, \frac{a_2\lambda^{-\frac{\rho_1}{\beta}}}{8|a_1|+1}\right\}
\end{equation} 
such that
\begin{equation*}\label{eq-assumption-of-w-above-some-boundes-value}
q(\rho_3)>\frac{2^{1+m}a_2}{a_3}\lambda^{\frac{m\rho_1}{(1-m)\beta}}.
\end{equation*}
Then by continuity of $q(\rho)$ on $(0,\infty)$, there exists a maximal interval $(\rho_4,\rho_3)$, $(0\leq\rho_4<\rho_3)$ such that 
\begin{equation}\label{eq-condition-of-w-on-the-maximal-intervals}
q(\rho)>\frac{2^{1+m}a_2}{a_3}\lambda^{\frac{m\rho_1}{(1-m)\beta}} \qquad \forall \rho\in (\rho_4,\rho_3). 
\end{equation}
By \eqref{eq-eq-for-w-with-respect-to-v}, \eqref{eq-positivity-of-w-with-respect-to-v-for-allo-rho}, \eqref{eq-lower-bound-of-overline-q-in-the-nbd-of-zero}, \eqref{eq-assumption-of-w-above-some-boundes} and \eqref{eq-condition-of-w-on-the-maximal-intervals}, $q(\rho)$ satisfies

\begin{align}\label{eq-aligned-inequality-for-w-rho}
q_{\rho}(\rho)&\leq \frac{q(\rho)}{\rho}\left[\left(a_1-\frac{a_3\overline{w}(\rho)}{4\rho}q(\rho)\right)+\frac{\overline{w}(\rho)}{\rho}\left(\frac{a_2}{\overline{w}(\rho)^m}-\frac{a_3}{2}q(\rho)\right)-\frac{a_3\overline{w}(\rho)}{4\rho}q(\rho)\right]\notag\\
&\le\frac{q(\rho)}{\rho}\left[\left(a_1-\frac{a_2\lambda^{-\frac{\rho_1}{\beta}}}{4\rho}\right)+\frac{\overline{w}(\rho)}{\rho}\left( a_2\lambda^{\frac{m\rho_1}{(1-m)\beta}}-\frac{a_3}{2}q(\rho)\right)-\frac{a_3\lambda^{-\frac{\rho_1}{(1-m)\beta}}}{4\rho}q(\rho)\right]\\
& \leq -\frac{a_3\lambda^{-\frac{\rho_1}{(1-m)\beta}}}{4\rho^2}q(\rho)^2<0
\end{align}
in $(\rho_4,\rho_3)$. Dividing \eqref{eq-aligned-inequality-for-w-rho} by $q(\rho)^2$ and integrating over $(\rho,\rho_3)$, $\rho_4<\rho<\rho_3$, 
\begin{align}
&\overline{w}_{\rho}(\rho)=\frac{1}{q(\rho)}\leq\left(\frac{1}{q(\rho_3)}+\frac{a_3\lambda^{-\frac{\rho_1}{(1-m)\beta}}}{4\rho_3}\right)-\frac{a_3\lambda^{-\frac{\rho_1}{(1-m)\beta}}}{4\rho}\quad \forall \rho\in(\rho_4,\rho_3)\notag\\
\Rightarrow\quad&\overline{w}(\rho)\geq \overline{w}(\rho_3)+\left(\frac{1}{q(\rho_3)}+\frac{a_3\lambda^{-\frac{\rho_1}{(1-m)\beta}}}{4\rho_3}\right)(\rho-\rho_3)+\frac{a_3\lambda^{-\frac{\rho_1}{(1-m)\beta}}}{4}\log(\rho_3/\rho) \quad \forall \rho\in(\rho_4,\rho_3).\label{eq-inequality-for-overline-q-on-rho-4-amnd-rho-3}
\end{align}
If $\rho_4=0$, then by \eqref{eq-inequality-for-overline-q-on-rho-4-amnd-rho-3},
\begin{equation*}
\lim_{\rho\to0^+}\overline{w}(\rho)=\infty
\end{equation*}
which contradicts \eqref{q-bar-value-x=0}. Hence $\rho_4>0$ and
\begin{equation}\label{w-lhs-value}
q(\rho_4)=\frac{2^{1+m}a_2}{a_3}\lambda^{\frac{m\rho_1}{(1-m)\beta}}.
\end{equation}
By \eqref{eq-aligned-inequality-for-w-rho} and \eqref{w-lhs-value},
\begin{equation*}
q(\rho)<q(\rho_4)=\frac{2^{1+m}a_2}{a_3}\lambda^{\frac{m\rho_1}{(1-m)\beta}} \quad \forall \rho_4<\rho<\rho_3
\end{equation*}
which contradicts \eqref{eq-condition-of-w-on-the-maximal-intervals}. Hence no such $\rho_3>0$ exists and
\begin{equation}\label{eq-completed-of-upper-bound-of-w-rho}
q(\rho)\leq \frac{2^{1+m}a_2}{a_3}\lambda^{\frac{m\rho_1}{(1-m)\beta}} \qquad \forall 0<\rho<\min\left(\rho_2,\frac{a_2\lambda^{-\frac{\rho_1}{\beta}}}{8|a_1|+1}\right).
\end{equation}
Now suppose the first inequality of \eqref{eq-claim-bounds-of-w-rho-upper-and-lower} does not hold for any $\rho_0>0$. Then there exists a constant 
\begin{equation}\label{eq-assumption-of-w-lower-some-boundes}
0<\rho_5<\min\left\{\rho_2,\frac{a_2\lambda^{-\frac{\rho_1}{\beta}}}{8|a_1|+1}\right\}
\end{equation} 
such that
\begin{equation}\label{eq-assumption-of-w-lower-some-boundes-value}
q(\rho_5)<\frac{a_2}{8a_3}\lambda^{\frac{m\rho_1}{(1-m)\beta}}.
\end{equation}
By \eqref{eq-lower-bound-of-overline-q-in-the-nbd-of-zero} and \eqref{eq-assumption-of-w-lower-some-boundes-value},
\begin{equation}\label{bar-q-w-upper-bd}
\overline{w}(\rho_5)q(\rho_5)<\frac{a_2\lambda^{-\frac{\rho_1}{\beta}}}{4a_3}.
\end{equation}
Then by \eqref{bar-q-w-upper-bd} and continuity of $\overline{w}(\rho)q(\rho)$ on $(0,\infty)$ there exists a maximal interval $(\rho_6,\rho_5)$ ($0\leq\rho_6<\rho_5$) such that 
\begin{equation}\label{eq-condition-of-w-on-the-maximal-intervals-2}
\overline{w}(\rho)q(\rho)<\frac{a_2\lambda^{-\frac{\rho_1}{\beta}}}{4a_3}\qquad \forall \rho\in (\rho_6,\rho_5). 
\end{equation}
By \eqref{eq-eq-for-w-with-respect-to-v}, \eqref{eq-positivity-of-w-with-respect-to-v-for-allo-rho}, \eqref{eq-lower-bound-of-overline-q-in-the-nbd-of-zero}, \eqref{eq-assumption-of-w-lower-some-boundes} and \eqref{eq-condition-of-w-on-the-maximal-intervals-2},
\begin{align}
\left(\overline{w}(\rho)q(\rho)\right)_{\rho}=&\overline{w}(\rho)q_{\rho}(\rho)+1\notag\\
\ge&\frac{\overline{w}(\rho)q(\rho)}{\rho}\left[\left(a_1+\frac{a_2\overline{w}(\rho)^{1-m}}{4\rho}\right)
+\frac{3}{4\rho}\left(a_2\overline{w}(\rho)^{1-m}-2a_3q(\rho)\overline{w}(\rho)\right)
+\frac{a_3}{2\rho}\overline{w}(\rho)q(\rho)\right]\notag\\
\ge&\frac{\overline{w}(\rho)q(\rho)}{\rho}\left[\left(a_1+\frac{a_2\lambda^{-\frac{\rho_1}{\beta}}}{\rho}\right)
+\frac{3}{4\rho}\left(a_2\lambda^{-\frac{\rho_1}{\beta}}-\frac{a_2\lambda^{-\frac{\rho_1}{\beta}}}{2}\right)
+\frac{a_3}{2\rho}\overline{w}(\rho)q(\rho)\right]\notag\\
\ge&\frac{a_3}{2\rho^2}\left(\overline{w}(\rho)q(\rho)\right)^2\qquad \mbox{on $(\rho_6,\rho_5)$}.\label{eq-aligned-inequality-for-w-rho-2}
\end{align}
Dividing \eqref{eq-aligned-inequality-for-w-rho-2} by $(\overline{w}(\rho)q(\rho))^2$ and integrating over $(\rho,\rho_5)$, $\rho_6<\rho<\rho_5$, we get
\begin{align}
&\frac{\overline{w}_{\rho}(\rho)}{\overline{w}(\rho)}=\frac{1}{\overline{w}(\rho)q(\rho)}\geq\left(\frac{\overline{w}_{\rho}(\rho_5)}{\overline{w}(\rho_5)}-\frac{a_3}{2\rho_5}\right)+\frac{a_3}{2\rho}\quad \forall \rho\in(\rho_6,\rho_5)\notag\\
\Rightarrow\quad&\log\overline{w}(\rho)\leq \log\overline{w}(\rho_5)+\left(\frac{\overline{w}_{\rho}(\rho_5)}{\overline{w}(\rho_5)}-\frac{a_3}{2\rho_5}\right)(\rho-\rho_5)+\frac{a_3}{2}\log\left(\frac{\rho}{\rho_5}\right) \quad \forall \rho\in(\rho_6,\rho_5).
\label{eq-inequality-for-overline-q-on-rho-4-amnd-rho-3-2}
\end{align}
If $\rho_6=0$, then by \eqref{eq-inequality-for-overline-q-on-rho-4-amnd-rho-3-2},
\begin{equation*}
\lim_{\rho\to0^+}\overline{w}(\rho)=0
\end{equation*}
which contradicts \eqref{q-bar-value-x=0}. Hence $\rho_6>0$ and
\begin{equation}\label{eq-existence-of-rho-6-be-positive-and-value-a-2-over-2-a-3}
\overline{w}(\rho_6)q(\rho_6)=\frac{a_2\lambda^{-\frac{\rho_1}{\beta}}}{4a_3}.
\end{equation}
By \eqref{eq-aligned-inequality-for-w-rho-2} and \eqref{eq-existence-of-rho-6-be-positive-and-value-a-2-over-2-a-3}, 
\begin{equation*}
\overline{w}(\rho)q(\rho)>\frac{a_2\lambda^{-\frac{\rho_1}{\beta}}}{4a_3} \quad \forall \rho_6<\rho<\rho_5
\end{equation*} 
which contradicts \eqref{eq-condition-of-w-on-the-maximal-intervals-2}. Hence no such $\rho_5>0$ exists and
\begin{equation}\label{eq-completed-of-lower-bound-of-w-rho}
q(\rho)\geq \frac{a_2}{8a_3}\lambda^{\frac{m\rho_1}{(1-m)\beta}}\qquad \forall 0<\rho<\min\left\{\rho_2,\frac{a_2\lambda^{-\frac{\rho_1}{\beta}}}{8|a_1|+1}\right\}.
\end{equation}
By \eqref{eq-completed-of-upper-bound-of-w-rho} and \eqref{eq-completed-of-lower-bound-of-w-rho}, \eqref{eq-claim-bounds-of-w-rho-upper-and-lower} holds for 
\begin{equation*}
\rho_0=\min\left\{\rho_2, \frac{a_2\lambda^{-\frac{\rho_1}{\beta}}}{8|a_1|+1}\right\}.
\end{equation*}
\indent Let $\left\{\rho_i\right\}\subset\R^+$ be a sequence such that $\rho_i\to 0$ as $i\to\infty$. Then, by \eqref{eq-claim-bounds-of-w-rho-upper-and-lower}, the sequence $\left\{\rho_i\right\}$ has a subsequence which we may assume without loss of generality to be the sequence $\left\{\rho_i\right\}$ itself such that
\begin{equation*}
q_{\infty}:=\lim_{i\to \infty}q(\rho_i) \quad \mbox{ exists}
\end{equation*} 
and 
\begin{equation}\label{eq-existence-of-w-rho-at-zero-+}
q_{\infty}\in\left[\frac{a_2}{8a_3}\lambda^{\frac{m\rho_1}{(1-m)\beta}},\frac{2^{1+m}a_2}{a_3}\lambda^{\frac{m\rho_1}{(1-m)\beta}}\right].
\end{equation}
By \eqref{eq-eq-for-w-with-respect-to-v},
\begin{align}
&\left(\rho^{-a_1}e^{a_2\int_{\rho}^{1}s^{-2}\overline{w}(s)^{1-m}\,ds}q(\rho)\right)_{\rho}=-\rho^{-a_1}\left[(1-m)\overline{w}(\rho)^{-1}+a_3\rho^{-2}\overline{w}(\rho)q(\rho)^2\right]e^{a_2\int_{\rho}^{1}s^{-2}\overline{w}(s)^{1-m}\,ds}\quad\forall \rho>0 \notag\\
\Rightarrow\quad&q(\rho)=\frac{q(1)+\int_{\rho}^1s^{-a_1}\left[(1-m)\overline{w}(s)^{-1}+a_3s^{-2}\overline{w}(s)q(s)^2\right]e^{a_2\int_s^1\sigma^{-2}\overline{w}(\sigma)^{1-m}\,d\sigma}\,ds}{\rho^{-a_1}e^{a_2\int_{\rho}^{1}s^{-2}\overline{w}(s)^{1-m}\,ds}}\quad\forall \rho>0.\label{eq-aligned-expression-of-w-with-quotient-and-integartion}
\end{align}
Since $\lim_{s\to 0^+}s^le^{\frac{1}{s}}=\infty$ for any $l\in\R$, by \eqref{q-bar-value-x=0} and \eqref{eq-claim-bounds-of-w-rho-upper-and-lower},
\begin{equation*}
\int_{\rho_i}^1s^{-a_1}\left[(1-m)\overline{w}(s)^{-1}+a_3s^{-2}\overline{w}(s)q(s)^2\right]e^{a_2\int_s^1\sigma^{-2}\overline{w}(\sigma)^{1-m}\,d\sigma}\,ds\to\infty \qquad \mbox{as $i\to\infty$}
\end{equation*}
and
\begin{equation*}
\rho_i^{-a_1}e^{a_2\int_{\rho_i}^{1}s^{-2}\overline{w}(s)^{1-m}\,ds}\to\infty \qquad \mbox{as $i\to \infty$}.
\end{equation*}
Hence by \eqref{q-bar-value-x=0}, \eqref{eq-existence-of-w-rho-at-zero-+}, \eqref{eq-aligned-expression-of-w-with-quotient-and-integartion} and l'Hospital rule,
\begin{align}
q_{\infty}=&\lim_{i\to \infty}q(\rho_i)=\lim_{i\to \infty}\frac{-\rho_i^{-a_1}\left[(1-m)\overline{w}(\rho_i)^{-1}+a_3\rho_i^{-2}\overline{w}(\rho_i)q(\rho_i)^2\right]e^{a_2\int_{\rho_i}^{1}s^{-2}\overline{w}(s)^{1-m}\,ds}}{-a_1\rho_i^{-(a_1+1)}e^{a_2\int_{\rho_i}^{1}s^{-2}\overline{w}(s)^{1-m}\,ds}-a_2\rho_i^{-(a_1+2)}\overline{w}(\rho_i)^{1-m}e^{a_2\int_{\rho_i}^{1}s^{-2}\overline{w}(s)^{1-m}\,ds}}\notag\\
=&\lim_{i\to\infty}\frac{(1-m)\rho_i^2\overline{w}(\rho_i)^{-1}+a_3\overline{w}(\rho_i)q(\rho_i)^2}{a_1\rho_i+a_2\overline{w}(\rho_i)^{1-m}}
=\frac{a_3}{a_2}\lambda^{-\frac{m\rho_1}{(1-m)\beta}}q_{\infty}^2.\label{eq-after-lhospital-rule-to-find-w-at-zero}
\end{align}
Hence by \eqref{eq-existence-of-w-rho-at-zero-+} and \eqref{eq-after-lhospital-rule-to-find-w-at-zero},
\begin{equation*}
q_{\infty}=\frac{a_2}{a_3}\lambda^{\frac{m\rho_1}{(1-m)\beta}}.
\end{equation*}
Since the sequence $\left\{\rho_i\right\}$ is arbitrary, 
\begin{equation*}
\lim_{\rho\to 0^+}q(\rho)=\frac{a_2}{a_3}\lambda^{\frac{m\rho_1}{(1-m)\beta}}
\end{equation*}
and \eqref{eq-limit-of-w-rho-1-over-overline-q--rgo-to-a-2-aiver-a-3} follows. Since
\begin{equation}\label{v-m-q-bar-derivative-relation}
r^{\frac{\alpha_m}{\beta}+1}(v^{(m)})'(r)=\frac{\rho_1}{\beta}\rho\overline{w}_{\rho}(\rho)-\frac{\alpha_m}{\beta}r^{\frac{\alpha_m}{\beta}}v^{(m)}(r)\quad \forall \rho=r^{\frac{\rho_1}{\beta}}>0,
\end{equation}
by \eqref{blow-up-rate-at-x=0} and \eqref{eq-limit-of-w-rho-1-over-overline-q--rgo-to-a-2-aiver-a-3},
\begin{equation*}\label{eq-aligned-limit-of-g-prime-and-r-to-alpah-bover-beta-1-at-zero}
\begin{aligned}
\lim_{r\to 0^+}r^{\frac{\alpha_m}{\beta}+1}(v^{(m)})'(r)=\frac{\rho_1}{\beta}\lim_{\rho\to 0^+}\rho\overline{w}_{\rho}(\rho)-\frac{\alpha_m}{\beta}\lim_{r\to 0^+}r^{\frac{\alpha_m}{\beta}}v^{(m)}(r)=-\frac{\alpha_m}{\beta}\lambda^{-\frac{\rho_1}{(1-m)\beta}}
\end{aligned}
\end{equation*}
and \eqref{v-m-derivative-at-0} follows.
\end{proof}

\begin{lemma}\label{lem-limit-of-overline-q-rho-rho-1-over-overline-q--rgo}
Let $n\ge 3$, $0\le m<\frac{n-2}{n}$, $\rho_1>0$, $\lambda>0$ and $\beta>\beta_0^{(m)}$.  Then
\begin{equation}\label{eq-limit-of-overline-q-rho-rho-1-over-overline-q--rgo-to-a-2-aiver-a-3}
\lim_{\rho\to 0^+}\overline{w}_{\rho\rho}(\rho)=\frac{a_3(ma_3-a_1)}{a_2^2}\lambda^{-\frac{(2m-1)\rho_1}{(1-m)\beta}}
\end{equation}
where $a_1$, $a_2$ and $a_3$ are constants given by \eqref{a1-2-3-4-defn}.
Hence $\overline{w}$ can be extended to a function in $C^2([0,\infty))$ by defining $\overline{w}_{\rho}(0)$, $\overline{w}_{\rho}(0)$ and $\overline{w}_{\rho\rho}(0)$ by \eqref{w-w'-w''-x=0}.
\end{lemma}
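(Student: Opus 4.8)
The plan is to extract $\2{w}_{\rho\rho}$ from \eqref{eq-for-overline-q-corresponding-to-v-not-u}. Expanding $(\2{w}_{\rho}/\2{w})_{\rho}=\2{w}_{\rho\rho}/\2{w}-\2{w}_{\rho}^2/\2{w}^2$ in \eqref{eq-for-overline-q-corresponding-to-v-not-u} and multiplying through by $\2{w}$ gives
\[
\2{w}_{\rho\rho}(\rho)=\frac{G(\rho)}{\rho^2}+(1-m)\frac{\2{w}_{\rho}(\rho)^2}{\2{w}(\rho)},\qquad G(\rho):=a_3\2{w}(\rho)-a_2\2{w}_{\rho}(\rho)\2{w}(\rho)^{1-m}-a_1\rho\,\2{w}_{\rho}(\rho).
\]
Write $w_0:=\2{w}(0)=\lambda^{-\rho_1/((1-m)\beta)}$ and, by \eqref{q-bar-value-x=0} and Lemma \ref{lem-limit-of-w-rho-1-over-overline-q--rgo}, $w_1:=\lim_{\rho\to0^+}\2{w}_{\rho}(\rho)=\tfrac{a_3}{a_2}w_0^{m}$. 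Since then $a_2w_1w_0^{1-m}=a_3w_0$, one gets $G(\rho)\to0$ as $\rho\to0^+$, while the last term above tends to $(1-m)w_1^2/w_0$. So the whole question reduces to computing $\lim_{\rho\to0^+}G(\rho)/\rho^2$, a genuine $0/0$ limit; note a plain Taylor expansion of $\2{w}$ will not settle this, since it would require a third-order coefficient of $\2{w}$ at $0$ that is not yet available. This is the main obstacle.

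To get around it I would differentiate $G$ and substitute the displayed expression for $\2{w}_{\rho\rho}$ back into $G_{\rho}$; the $\2{w}_{\rho\rho}$ terms contribute a $\rho^{-2}G$ piece, and collecting terms leaves a linear first-order ODE
\[
G_{\rho}(\rho)+\frac{c(\rho)}{\rho^2}\,G(\rho)=R(\rho),\qquad c(\rho):=a_2\2{w}(\rho)^{1-m}+a_1\rho,
\]
where $R$ is an explicit continuous function of $\2{w}$ and $\2{w}_{\rho}$ only, so $R(0):=\lim_{\rho\to0^+}R(\rho)$ is computable in terms of $w_0$ and $w_1$, and $c(0)=a_2w_0^{1-m}>0$ since $a_2=\beta^2/\rho_1>0$. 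Solving with the integrating factor $\mu(\rho)=\exp\!\big(\int_{1}^{\rho}c(s)s^{-2}\,ds\big)$, which decays like $e^{-c(0)/\rho}$ as $\rho\to0^+$, and using $\mu(\rho)G(\rho)\to0$ as $\rho\to0^+$, I obtain $G(\rho)=\mu(\rho)^{-1}\int_{0}^{\rho}\mu(s)R(s)\,ds$. Then l'Hospital's rule applied to $G(\rho)/\rho^2=\big(\int_0^{\rho}\mu(s)R(s)\,ds\big)/\big(\rho^2\mu(\rho)\big)$, together with $(\rho^2\mu)_{\rho}=(2\rho+c(\rho))\mu$, yields $\lim_{\rho\to0^+}G(\rho)/\rho^2=R(0)/c(0)$. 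This is the same device — integrating factor with exponential decay, then l'Hospital — already used for the function $q$ in the proof of Lemma \ref{lem-limit-of-w-rho-1-over-overline-q--rgo}, so it should go through smoothly.

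Putting the pieces together, $\lim_{\rho\to0^+}\2{w}_{\rho\rho}(\rho)=R(0)/(a_2w_0^{1-m})+(1-m)w_1^2/w_0$; substituting $w_1=\tfrac{a_3}{a_2}w_0^{m}$, the two terms combine and simplify to $\frac{a_3(ma_3-a_1)}{a_2^2}\,w_0^{2m-1}=\frac{a_3(ma_3-a_1)}{a_2^2}\lambda^{-(2m-1)\rho_1/((1-m)\beta)}$, which is \eqref{eq-limit-of-overline-q-rho-rho-1-over-overline-q--rgo-to-a-2-aiver-a-3}. Finally, since $\2{w}\in C^2((0,\infty))$ and $\2{w}$, $\2{w}_{\rho}$, $\2{w}_{\rho\rho}$ each have a finite one-sided limit at $\rho=0$ (the first two coming from Lemma \ref{lem-limit-of-w-rho-1-over-overline-q--rgo}), a standard argument using the mean value theorem shows $\2{w}$ extends to a $C^2([0,\infty))$ function with $\2{w}(0)$, $\2{w}_{\rho}(0)$, $\2{w}_{\rho\rho}(0)$ given by \eqref{w-w'-w''-x=0}. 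Everything other than the $0/0$ term $G(\rho)/\rho^2$ is bookkeeping and the algebraic verification of the final constant.
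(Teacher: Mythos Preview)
Your argument is correct, and it takes a genuinely different route from the paper's proof. The paper sets $\tilde{v}=\2{w}_{\rho}$, writes $\tilde{v}(\rho)=v_0+v_1(\rho)\rho$ with $v_0=\tfrac{a_3}{a_2}\2{w}(0)^m$, derives a first-order equation for $v_1$, and then runs a barrier/contradiction argument: assuming $v_1$ exceeds $(1+3\3)a_4\lambda^{-\frac{(2m-1)\rho_1}{(1-m)\beta}}$ on some interval forces $v_1'<-\delta_0/\rho^2$ there, which is incompatible with $\2{w}_{\rho}(0^+)=v_0$; similarly for the lower bound. Letting $\3\to0$ gives $\lim_{\rho\to0^+}v_1=a_4\lambda^{-\frac{(2m-1)\rho_1}{(1-m)\beta}}$. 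The paper also tacitly splits on the sign of $a_4$ (``without loss of generality $a_4>0$''). By contrast, you isolate the singular part $G(\rho)/\rho^2$ of $\2{w}_{\rho\rho}$, observe that $G$ itself satisfies a linear first-order ODE with coefficient $c(\rho)/\rho^2$, and then reuse exactly the integrating-factor-plus-l'Hospital device from Lemma~\ref{lem-limit-of-w-rho-1-over-overline-q--rgo}. Your method is more systematic (it simply iterates the same mechanism one order higher), avoids the sign-of-$a_4$ case split, and makes the $C^2$ extension transparent since you compute $\lim_{\rho\to0^+}\2{w}_{\rho\rho}(\rho)$ directly rather than the difference quotient $\lim_{\rho\to0^+}(\2{w}_{\rho}(\rho)-v_0)/\rho$. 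The paper's barrier argument, on the other hand, is more self-contained and does not require the integral representation or the exponential-decay estimate on $\mu$.
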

\begin{proof}
Let $\tilde{v}(\rho)=\overline{w}_{\rho}(\rho)$. Then by \eqref{eq-for-overline-q-corresponding-to-v-not-u},
\begin{equation}\label{eq-aligned-invert-eq-for-overine=q-to-v}
\begin{aligned}
\tilde{v}'&=(1-m)\frac{\tilde{v}^2}{\overline{w}}-\frac{a_1}{\rho}\tilde{v}-\frac{a_2}{\rho^2}\overline{w}^{1-m}\tilde{v}+\frac{a_3}{\rho^2}\overline{w}\\
&=(1-m)\frac{\tilde{v}^2}{\overline{w}}-\frac{a_1}{\rho}\tilde{v}-\frac{a_2\overline{w}^{1-m}}{\rho^2}\left(\tilde{v}-\frac{a_3}{a_2}\cdot\overline{w}^m\right)\qquad\forall\rho>0.
\end{aligned}
\end{equation}
Let $v_1(\rho)$ be given by
\begin{equation}\label{eq-split-of-v-by-v-sub-zero-and-v-sub-1}
\tilde{v}(\rho)=v_0+v_1(\rho)\rho 
\end{equation}
where
\begin{equation*}
v_0=\frac{a_3}{a_2}\overline{w}^m(0).
\end{equation*}
Then by \eqref{eq-aligned-invert-eq-for-overine=q-to-v} for any $\rho>0$,
\begin{equation}\label{eq-aligned-eq-for-v-sub-1-before-integrals}
\begin{aligned}
&v_1'(\rho)\rho+v_1(\rho)\\
&\qquad =\tilde{v}'(\rho)=\frac{(1-m)}{\overline{w}(\rho)}\tilde{v}^2(\rho)-a_1v_1(\rho)-\frac{a_1v_0}{\rho}-\frac{a_2\overline{w}(\rho)^{1-m}}{\rho^2}\left[\frac{a_3}{a_2}\left( \overline{w}^m(0)  -\overline{w}^m(\rho)\right)+v_1(\rho)\rho\right].
\end{aligned}
\end{equation}
By the mean value theorem, for any $\rho>0$ there exists a constant $\xi=\xi(\rho)\in(0,\rho)$ such that
\begin{equation}\label{eq-mean-value-theorem-for-overline-q-to-m}
\overline{w}^m(\rho)-\overline{w}^m(0)=m\overline{w}(\xi)^{m-1}\overline{w}_{\rho}(\xi)\rho.
\end{equation}
By \eqref{eq-aligned-eq-for-v-sub-1-before-integrals} and \eqref{eq-mean-value-theorem-for-overline-q-to-m},
\begin{align}
v_1'(\rho)=&\frac{1}{\rho}\left[\frac{(1-m)}{\overline{w}(\rho)}\tilde{v}^2(\rho)-(1+a_1)v_1(\rho)\right]+\frac{a_2\overline{w}(\rho)^{1-m}}{\rho^2}\left[\frac{ma_3}{a_2}\overline{w}(\xi)^{m-1}\overline{w}_{\rho}(\xi)-v_1(\rho)-\frac{a_1v_0}{a_2\overline{w}(\rho)^{1-m}}\right]\nonumber\\
=&\frac{a_2\overline{w}(\rho)^{1-m}}{\rho}\left[\frac{1-m}{a_2\overline{w}(\rho)^{2-m}}\tilde{v}^2(\rho)-\frac{f_1(\rho)}{\rho}\right] \label{eq-eq-for-v-1-with-f-2-and-some-terms}
\end{align}
where
\begin{equation}\label{f1-defn}
f_1(\rho)=\frac{(1+a_1)}{a_2}\rho v_1(\rho)\overline{w}(\rho)^{m-1}+v_1(\rho)-f_2(\rho)
\end{equation}
and
\begin{equation*}
f_2(\rho)=\frac{ma_3}{a_2}\overline{w}(\xi(\rho))^{m-1}\overline{w}_{\rho}(\xi(\rho))-\frac{a_1a_3}{a_2^2}\overline{w}(\rho)^{m-1}\lambda^{-\frac{m\rho_1}{(1-m)\beta}}.
\end{equation*}
Let
$$
a_4=\frac{a_3(ma_3-a_1)}{a_2^2}.
$$
Without loss of generality we may assume that $a_4>0$. 
Then by \eqref{q-bar-value-x=0} and \eqref{eq-limit-of-w-rho-1-over-overline-q--rgo-to-a-2-aiver-a-3},
\begin{equation}\label{f1-x=0}
\lim_{\rho\to 0^+}f_2(\rho)=a_4\lambda^{-\frac{(2m-1)\rho_1}{(1-m)\beta}}
\end{equation}
and
\begin{equation}\label{rho-v1-x=0}
\lim_{\rho\to 0^+}\rho v_1(\rho)=0.
\end{equation}
Let $0<\3<1/5$. By \eqref{f1-x=0} and \eqref{rho-v1-x=0} there exists a constant $\rho_2>0$ such that
\begin{equation}\label{f1-upper-lower-bds}
(1-\3)a_4\lambda^{-\frac{(2m-1)\rho_1}{(1-m)\beta}}\le f_2(\rho)\le (1+\3)a_4\lambda^{-\frac{(2m-1)\rho_1}{(1-m)\beta}}\quad\forall 0<\rho\le\rho_2
\end{equation}
and
\begin{equation}\label{rho-nv1-upper-lower-bds}
\frac{(1+a_1)}{a_2}\rho |v_1(\rho)|\overline{w}(\rho)^{m-1}\le \3 a_4\lambda^{-\frac{(2m-1)\rho_1}{(1-m)\beta}}\quad\forall 0<\rho\le\rho_2.
\end{equation}
and \eqref{eq-lower-bound-of-overline-q-in-the-nbd-of-zero} hold. Let
\begin{equation*}
\rho_{\3}=\min \left(\rho_2,\frac{\3a_2a_4\lambda^{-\frac{(2m-1)\rho_1}{(1-m)\beta}}}{16(1-m)}\cdot\inf_{0<\rho\le 1}\frac{\overline{w}(\rho)^{2-m}}{\overline{w}_{\rho}(\rho)^2}\right)
\end{equation*} 
We claim that 
\begin{equation}\label{v1-upper-lower-bd}
(1-3\3)a_4\lambda^{-\frac{(2m-1)\rho_1}{(1-m)\beta}}
\le v_1(\rho)\le (1+3\3)a_4\lambda^{-\frac{(2m-1)\rho_1}{(1-m)\beta}}\quad\forall 0<\rho<\rho_{\3}.
\end{equation}
Suppose that the second inequality in \eqref{v1-upper-lower-bd} does not hold. Then there exists a constant $\rho_1'\in(0,\rho_{\3})$ such that
\begin{equation*}
v_1(\rho_1')>(1+3\3)a_4\lambda^{-\frac{(2m-1)\rho_1}{(1-m)\beta}}.
\end{equation*}
By continuity of $v_1(\rho)$ on $(0,\infty)$, there exists a maximal interval $(\rho_3,\rho_4)$ containing $\rho_1'$, $0\leq\rho_3<\rho_1'<\rho_4\leq\rho_{\3}$, such that
\begin{equation}\label{v1-lower-bd}
v_1(\rho)>(1+3\3)a_4\lambda^{-\frac{(2m-1)\rho_1}{(1-m)\beta}} \quad \forall\rho\in(\rho_3,\rho_4).
\end{equation}
Then by \eqref{f1-defn}, \eqref{f1-upper-lower-bds}, \eqref{rho-nv1-upper-lower-bds} and \eqref{v1-lower-bd},
\begin{equation}\label{f2-lower-bd}
f_1(\rho)>\3 a_4\lambda^{-\frac{(2m-1)\rho_1}{(1-m)\beta}} \quad \forall\rho\in(\rho_3,\rho_4).
\end{equation}
Hence by \eqref{eq-lower-bound-of-overline-q-in-the-nbd-of-zero}, \eqref{eq-eq-for-v-1-with-f-2-and-some-terms} and \eqref{f2-lower-bd},
\begin{equation}\label{v1'-ineqn}
v_1'(\rho)\le\frac{a_2\overline{w}(\rho)^{1-m}}{\rho}\left[\frac{1-m}{a_2\overline{w}(\rho)^{2-m}}\tilde{v}^2(\rho)
-\frac{\3 a_4\lambda^{-\frac{(2m-1)\rho_1}{(1-m)\beta}}}{\rho}\right]\le -\frac{\delta_0}{\rho^2}<0\quad\forall\rho\in(\rho_3,\rho_4)
\end{equation}
for some constant $\delta_0>0$.
Integrating \eqref{v1'-ineqn} over $(\rho,\rho_4)$, 
\begin{align}\label{eq-align-inequality-for-overline-q-rho-with-v-0a=and-v-1}
v_1(\rho_4)&-v_1(\rho)\le\delta_0\left(\frac{1}{\rho_4}-\frac{1}{\rho}\right)\quad\forall\rho\in (\rho_3,\rho_4)\nonumber\\
\Rightarrow& \qquad \overline{w}_{\rho}(\rho)=\tilde{v}(\rho)=v_0+\rho v_1(\rho)\geq v_0+\rho v_1(\rho_4)+\delta_0-\frac{\delta_0\rho}{\rho_4} \quad\forall\rho\in (\rho_3,\rho_4).
\end{align}
If $\rho_3=0$, then by \eqref{eq-align-inequality-for-overline-q-rho-with-v-0a=and-v-1} and Lemma \ref{lem-limit-of-w-rho-1-over-overline-q--rgo},
\begin{equation*}
v_0=\lim_{\rho\to 0^+}\overline{w}_{\rho}(\rho)\geq v_0+\delta_0>v_0
\end{equation*}
and contradiction arises. Hence $\rho_3>0$. Thus
\begin{equation*}
v_1(\rho_3)=(1+3\3)a_4\lambda^{-\frac{(2m-1)\rho_1}{(1-m)\beta}}.
\end{equation*}
Then by \eqref{v1'-ineqn}, 
\begin{equation*}
v_1(\rho)<v_1(\rho_3)=(1+3\3)a_4\lambda^{-\frac{(2m-1)\rho_1}{(1-m)\beta}} \quad \forall\rho\in(\rho_3,\rho_4)
\end{equation*}
which contradicts \eqref{v1-lower-bd}. Hence no such $\rho_1'>0$ exists and the second inequality in \eqref{v1-upper-lower-bd} follows. By a similar argument the first inequality in \eqref{v1-upper-lower-bd} also holds. Hence \eqref{v1-upper-lower-bd} holds. Since $\3\in (0,1/5)$ is arbitrary, by \eqref{v1-upper-lower-bd},
\begin{equation*}
\overline{w}_{\rho\rho}(0)=\lim_{\rho\to0^+}\frac{v(\rho)-v_0}{\rho}=\lim_{\rho\to 0^+}v_1(\rho)
=\frac{a_3(ma_3-a_1)}{a_2^2}\cdot\lambda^{-\frac{(2m-1)\rho_1}{(1-m)\beta}}
\end{equation*}
and the lemma follows.
\end{proof}

By Lemma \ref{lem-limit-of-w-rho-1-over-overline-q--rgo}, Lemma \ref{lem-limit-of-overline-q-rho-rho-1-over-overline-q--rgo}, \eqref{v-m-q-bar-derivative-relation} and Taylor's expansions for $\overline{w}$ and $\overline{w}_{\rho}$, Theorem \ref{thm-singular-rate-near-zero-of-solutions-1} follows.

\begin{cor}\label{v'<0-cor}
Let $n\ge 3$, $0\le m<\frac{n-2}{n}$, $\rho_1>0$, $\lambda>0$, $\beta>\beta_0^{(m)}$ and  $\phi_m$, $\alpha_m$, $\beta_0^{(m)}$,
be given by \eqref{phi-m-defn}, \eqref{alpha-m-defn}  and \eqref{beta-0-defn} respectively and $v=v^{(m)}$ is a radially symmetric solution of\eqref{elliptic-eqn} in $\R^n\setminus\{0\}$ which satisfies \eqref{blow-up-rate-at-x=0}.  Then 
\begin{equation}\label{v'<0-eqn1}
(v^{(m)})'(r)<0\quad\forall r>0.
\end{equation}
\end{cor}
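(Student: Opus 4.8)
The plan is to argue by contradiction, using the radial form of \eqref{elliptic-eqn} together with the behaviour of $v^{(m)}$ near the origin that has just been established. Writing $v=v^{(m)}$ and $r=|x|$, equation \eqref{elliptic-eqn} reads
\begin{equation*}
(\phi_m(v))''+\frac{n-1}{r}(\phi_m(v))'+\alpha_m v+\beta r v'=0\qquad\forall r>0,
\end{equation*}
and from \eqref{vm-vm'-asymptotic-behaviour} (or directly from \eqref{v-m-derivative-at-0} in Lemma \ref{lem-limit-of-w-rho-1-over-overline-q--rgo}) one has $\lim_{r\to0^+}r^{(\alpha_m/\beta)+1}v'(r)=-\frac{\alpha_m}{\beta}\lambda^{-\rho_1/((1-m)\beta)}<0$, so there is some $\delta>0$ with $v'(r)<0$ for all $0<r<\delta$.

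Next I would suppose, towards a contradiction, that $v'(r_0)\ge0$ for some $r_0>0$, and set $r_1:=\inf\{r>0:v'(r)\ge0\}$. By the previous step $r_1\ge\delta>0$, and since $v\in C^2((0,\infty))$ and $v'<0$ on $(0,r_1)$, continuity forces $v'(r_1)=0$, while the fact that the difference quotient $\bigl(v'(r_1)-v'(r)\bigr)/(r_1-r)$ is $\ge0$ for $r<r_1$ forces $v''(r_1)\ge0$. Evaluating the radial equation at $r=r_1$ and using $v'(r_1)=0$, the first-derivative terms drop out: $(\phi_m(v))'(r_1)=\phi_m'(v(r_1))\,v'(r_1)=0$ and $(\phi_m(v))''(r_1)=\phi_m''(v(r_1))\,v'(r_1)^2+\phi_m'(v(r_1))\,v''(r_1)=\phi_m'(v(r_1))\,v''(r_1)$, so the equation reduces to $\phi_m'(v(r_1))\,v''(r_1)+\alpha_m v(r_1)=0$. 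Since $\phi_m'(s)=s^{m-1}>0$ for $s>0$ (with $\phi_0'(s)=1/s$), this yields $v''(r_1)=-\alpha_m v(r_1)^{2-m}$, which is strictly negative because $v(r_1)>0$ and $\alpha_m=\frac{2\beta+\rho_1}{1-m}>0$ (here $\beta>\beta_0^{(m)}\ge0$, $\rho_1>0$ and $0\le m<1$). This contradicts $v''(r_1)\ge0$, so no such $r_0$ exists and \eqref{v'<0-eqn1} follows.

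I do not expect a genuine obstacle here: this is essentially a maximum-principle computation at a would-be first critical point of $v$. The only point requiring care is that the corollary is asserted for an \emph{arbitrary} radial solution of \eqref{elliptic-eqn} satisfying the blow-up rate \eqref{blow-up-rate-at-x=0}, so the proof must not invoke the monotonicity \eqref{eq-condition-of-g-lambda-radially-symmetric-solution-0} of the particular solution produced by Theorem \ref{elliptic-eqn-existence-thm}, nor the uniqueness Theorem \ref{uniqueness-thm} (whose proof comes afterwards); the contradiction argument above avoids both, relying only on the ODE and the near-origin asymptotics from Theorem \ref{thm-singular-rate-near-zero-of-solutions-1}.
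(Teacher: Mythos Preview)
Your argument is correct and complete. It differs from the paper's proof in the key step. The paper does not argue pointwise at a first critical point; instead it rewrites the equation as
\[
\bigl(r^{n-1}v(r)^{m-1}v'(r)\bigr)'=-\alpha_m\Bigl(v(r)+\tfrac{\beta}{\alpha_m}\,r\,v'(r)\Bigr)
\]
and invokes Lemma~\ref{lem-strictly-positivity-of-overline-q-rho-over-zeor-to-infty} (equivalently, $\overline{w}_\rho>0$) to conclude that the right-hand side is strictly negative for all $r>0$. Thus $r^{n-1}v^{m-1}v'$ is strictly decreasing, and since, by the near-origin asymptotics, $v'<0$ on some interval $(0,\xi_0]$, one gets $r^{n-1}v^{m-1}v'(r)<\xi_0^{n-1}v(\xi_0)^{m-1}v'(\xi_0)<0$ for all $r>\xi_0$, hence $v'(r)<0$ everywhere.

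Your route is more elementary in that it does not use Lemma~\ref{lem-strictly-positivity-of-overline-q-rho-over-zeor-to-infty} at all: the single evaluation of the ODE at the would-be first zero of $v'$ suffices. The paper's route, on the other hand, yields the slightly stronger global statement that $r\mapsto r^{n-1}v^{m-1}v'$ is strictly decreasing, and fits naturally since Lemma~\ref{lem-strictly-positivity-of-overline-q-rho-over-zeor-to-infty} is already available at this point. Both proofs correctly use only the near-origin asymptotics of Theorem~\ref{thm-singular-rate-near-zero-of-solutions-1} and avoid circularity with Theorem~\ref{uniqueness-thm}, exactly as you note.
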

\begin{proof}
By \eqref{elliptic-eqn} and Lemma \ref{lem-strictly-positivity-of-overline-q-rho-over-zeor-to-infty},
\begin{equation}\label{v-2nd-derivative<0}
(r^{n-1}v(r)^{m-1}v'(r))'=-\alpha_m \left(v(r)+\frac{\beta}{\alpha_m}rv'(r)\right)<0\quad\forall r>0
\end{equation}
By Theorem \ref{thm-singular-rate-near-zero-of-solutions-1} there exists $\xi_0>0$ such that 
\begin{equation}\label{v'<0-near-0}
v'(r)<0\quad\forall 0<r\le\xi_0.
\end{equation}
By \eqref{v-2nd-derivative<0} and \eqref{v'<0-near-0},
\begin{align}\label{v'<0-x-away-from-0}
&r^{n-1}v(r)^{m-1}v'(r)<\xi_0^{n-1}v(\xi_0)^{m-1}v'(\xi_0)<0\quad\forall r>\xi_0\notag\\
\Rightarrow\quad&v'(r)<0\quad\forall r>\xi_0.
\end{align}
By \eqref{v'<0-near-0} and \eqref{v'<0-x-away-from-0}, we get \eqref{v'<0-eqn1} the lemma follows.
\end{proof}

We are now ready for the proof of Theorem \ref{uniqueness-thm}.

\begin{proof}[\textbf{Proof of Theorem \ref{uniqueness-thm}}]
Note that the case $0<m<\frac{n-2}{n}$ and $\beta\ge\frac{\rho_1}{n-2-nm}$ is already proved in \cite{Hu4}. We will give a new proof  which includes all cases of the theorem. By \eqref{elliptic-eqn}, \eqref{blow-up-rate-at-x=0} and integration by parts,
\begin{align}\label{eq-aligned-original-for-g-i-eq}
&r^{n-1}(v_1(r)^{m-1}v_1'(r)-v_2(r)^{m-1}v_2'(r))+\beta r^n(v_1(r)-v_2(r))\notag\\
&\qquad \qquad =\sum_{i=1}^{2}(-1)^{i-1}\xi^{n-2-\frac{m\alpha_m}{\beta}}\left(\xi^{\frac{\alpha_m}{\beta}}v_i(\xi)\right)^{m-1}\xi^{\frac{\alpha_m}{\beta}+1}v_i'(\xi)+\beta\xi^n(v_1(\xi)-v_2(\xi))\notag\\
&\qquad \qquad \qquad \qquad +(n\beta-\alpha_m)\int_{\xi}^{r}(v_1(\rho)-v_2(\rho))\rho^{n-1}\,d\rho,  \qquad \forall r>\xi>0.
\end{align}
By Theorem \ref{thm-singular-rate-near-zero-of-solutions-1}, there exist constants $\xi_0>0$ and $C_0>0$ such that
\begin{equation}\label{eq-control-of-the-first-term-on-the-right-hand-side-of-eq-for-g-1-and-g-2}
\left|\left(\xi^{\frac{\alpha_m}{\beta}}v_i(\xi)\right)^{m-1}\xi^{\frac{\alpha_m}{\beta}+1}v_i'(\xi)\right|\le C_0\quad \forall 0<\xi<\xi_0,i=1,2.
\end{equation}
Since $\beta>\beta_0^{(m)}$, $n-2-\frac{m\alpha_m}{\beta}>0$. Hence by \eqref{eq-control-of-the-first-term-on-the-right-hand-side-of-eq-for-g-1-and-g-2},
\begin{equation}\label{eq-control-of-the-first-term-on-the-right-hand-side-of-eq-for-g-1-and-g-2-2}
\lim_{\xi\to 0}\sum_{i=1}^{2}\left|\xi^{n-2-\frac{m\alpha_m}{\beta}}\left(\xi^{\frac{\alpha_m}{\beta}}v_i(\xi)\right)^{m-1}\xi^{\frac{\alpha_m}{\beta}+1}v_i'(\xi)\right|=0.
\end{equation}
By \eqref{vm-vm'-asymptotic-behaviour} of Theorem \ref{thm-singular-rate-near-zero-of-solutions-1} there exist constants $C>0$ and $r_0>0$ such that 
\begin{equation}\label{v1-v2-weighted-ineqn}
r^{\frac{\alpha_m}{\beta}}|v_1(r)-v_2(r)|\le Cr^{\frac{2\rho_1}{\beta}}\quad\forall 0<r<r_0.
\end{equation}
Hence
\begin{equation}\label{eq-control-of-the-second-term-on-the-right-hand-side-of-eq-for-g-1-and-g-2}
\left|\xi^{n}\left(v_1(\xi)-v_2(\xi)\right)\right|\le C\xi^{n-\frac{\alpha_m}{\beta}+\frac{2\rho_1}{\beta}}=C\xi^{\frac{(n-2-nm)}{(1-m)\beta}(\beta-\beta_0^{(m)})}\cdot\xi^{\frac{\rho_1}{\beta}}\to 0 \qquad \mbox{as $\xi\to 0$}.
\end{equation}
Letting $\xi\to 0$ in \eqref{eq-aligned-original-for-g-i-eq}, by \eqref{eq-control-of-the-first-term-on-the-right-hand-side-of-eq-for-g-1-and-g-2-2} and \eqref{eq-control-of-the-second-term-on-the-right-hand-side-of-eq-for-g-1-and-g-2},
\begin{equation}\label{eq-bound-of0-differences-of-two-solutions-of-under-some-range-of-beta-1}
r^{n-1}(v_1(r)^{m-1}v_1'(r)-v_2(r)^{m-1}v_2'(r))+\beta r^n(v_1(r)-v_2(r))=(n\beta-\alpha_m)\int_0^{r}(v_1(\rho)-v_2(\rho))\rho^{n-1}\,d\rho\quad \forall r>0.
\end{equation}
By Corollary \ref{v'<0-cor},
\begin{equation}\label{vi'<0-locally}
v_i'(r)<0\quad\forall r>0, i=1,2.
\end{equation}
Since $n+\frac{2\rho_1-\alpha_m}{\beta}=\frac{(n-2-nm)}{(1-m)\beta}(\beta-\beta_0^{(m)})+\frac{\rho_1}{\beta}>0$, by \eqref{v1-v2-weighted-ineqn},
\begin{equation}\label{eq-control-of-last-term-by-some-polynomial-345}
\left|(n\beta-\alpha_m)\int_0^{r}(v_1-v_2)(\rho)\rho^{n-1}\,d\rho\right|\leq C\int_0^{r}\rho^{n-\frac{\alpha_m}{\beta}+\frac{2\rho_1}{\beta}-1}\,d\rho=Cr^{n+\frac{2\rho_1-\alpha_m}{\beta}}\quad \forall 0<r<r_0
\end{equation}
for some constant $C>0$. Hence by \eqref{eq-bound-of0-differences-of-two-solutions-of-under-some-range-of-beta-1} and \eqref{eq-control-of-last-term-by-some-polynomial-345},
\begin{equation}\label{eq-aligned-equation-of-difference-g-1-and-g-2-for-general-case-1}
\begin{aligned}
r^{n-1}\left(v_1(r)^{m-1}v_1'(r)-v_2(r)^{m-1}v_2'(r)\right)+\beta r^n(v_1(r)-v_2(r))\leq Cr^{n+\frac{2\rho_1-\alpha_m}{\beta}}\quad \forall 0<r<r_0.
\end{aligned}
\end{equation}
Let 
\begin{equation*}
\mathcal{D}=\left\{0<r<r_0: v_1(r)\geq v_2(r)\right\}.
\end{equation*}
By \eqref{vi'<0-locally} and \eqref{eq-aligned-equation-of-difference-g-1-and-g-2-for-general-case-1} for any $r\in\mathcal{D}$,
\begin{align*}
&v_1(r)^{m-1}v_1'(r)+\beta rv_1(r)\\
&\qquad \qquad \leq v_2(r)^{m-1}v_2'(r)+\beta rv_2(r)+Cr^{1+\frac{2\rho_1-\alpha_m}{\beta}}\leq v_1(r)^{m-1}v_2'(r)+\beta rv_2(r)+Cr^{1+\frac{2\rho_1-\alpha_m}{\beta}}\\
&\Rightarrow \qquad\left(v_1-v_2\right)'(r)+\beta rv_1(r)^{1-m}\left(v_1-v_2\right)(r)\leq Cr^{1+\frac{2\rho_1-\alpha_m}{\beta}}v_1(r)^{1-m}.
\end{align*}
Hence
\begin{align}
&\left(\left(v_1-v_2\right)_+(r)e^{\beta\int_{r_1}^{r}\rho v_1(\rho)^{1-m}\,d\rho}\right)'\leq Cr^{1+\frac{2\rho_1-\alpha_m}{\beta}}v_1(r)^{1-m}e^{\beta\int_{r_1}^{r}\rho v_1(\rho)^{1-m}\,d\rho} \quad \forall 0<r_1<r<r_0\notag\\
&\qquad \Rightarrow\qquad \left(v_1-v_2\right)_+(r_2)\leq \left(v_1-v_2\right)_+(r_1)e^{-\beta\int_{r_1}^{r_2}\rho v_1(\rho)^{1-m}\,d\rho}\notag\\
&\qquad\qquad\qquad\qquad\qquad\qquad +C\frac{\int_{r_1}^{r_2}\rho^{1+\frac{2\rho_1-\alpha_m}{\beta}}v_1(\rho)^{1-m}\left(e^{\beta\int_{r_1}^{\rho}s v_1(s)^{1-m}\,ds}\right)\,d\rho}{e^{\beta\int_{r_1}^{r_2}\rho v_1(\rho)^{1-m}\,d\rho}}\qquad \forall 0<r_1<r_2<r_0.\label{eq-aligned-inequality-of-difference-of-g-1-abd-g-2-on-some-set-1}
\end{align}
Since $r v_1(r)^{1-m}\approx r^{1-(1-m)\frac{\alpha_m}{\beta}}=r^{-1-\frac{\rho_1}{\beta}}$ near $r=0$, both the numerator and denominator of the last term of \eqref{eq-aligned-inequality-of-difference-of-g-1-abd-g-2-on-some-set-1} goes to infinity as $r_1\to 0$. Hence by the l'Hospital rule,
\begin{equation}\label{eq-step-for-removing-the-second-term-in-inequality-for-g-1-minus-g-2}
\lim_{r_1\to 0}\frac{\int_{r_1}^{r_2}\rho^{1+\frac{2\rho_1-\alpha_m}{\beta}}v_1(\rho)^{1-m}\left(e^{\beta\int_{r_1}^{\rho}s v_1(s)^{1-m}\,ds}\right)\,d\rho}{e^{\beta\int_{r_1}^{r_2}\rho v_1(\rho)^{1-m}\,d\rho}}=\lim_{r_1\to 0}\frac{r_1^{\frac{2\rho_1-\alpha_m}{\beta}}}{\beta e^{\beta\int_{r_1}^{r_2}\rho v_1(\rho)^{1-m}\,d\rho}}.
\end{equation}
Since
\begin{equation*}
\lim_{r_1\to 0}\frac{\int_{r_1}^{r_2}\rho\cdot v_1(\rho)^{1-m}\,d\rho}{r_1^{-\frac{\rho_1}{\beta}}}=\lim_{r_1\to 0}\frac{r_1v_1(r_1)^{1-m}}{\frac{\rho_1}{\beta}r_1^{-\frac{\rho_1}{\beta}-1}}=\frac{\beta}{\rho_1}\lambda^{-\frac{\rho_1}{\beta}}\quad\forall 0<r_2<r_0,
\end{equation*}
for any $0<r_2<r_0$ there exists a constant $r_3\in(0,r_2)$ such that
\begin{align}
&\int_{r_1}^{r_2}\rho v_1(\rho)^{1-m}\,d\rho\ge \frac{\beta}{2\rho_1}\lambda^{-\frac{\rho_1}{\beta}}r_1^{-\frac{\rho_1}{\beta}}\qquad \qquad 0<r_1<r_3\notag\\
&\Rightarrow\qquad\left|\frac{r_1^{\theta}}{e^{\beta\int_{r_1}^{r_2}\rho v_1^{1-m}(\rho)\,d\rho}}\right|\leq \frac{r_1^{\theta}}{e^{\frac{\beta^2}{2\rho_1}\left(\lambda r_1\right)^{-\frac{\rho_1}{\beta}}}}\to 0 \quad \mbox{ as }r_1\to 0^+\quad\forall 0<r_2<r_0,\theta\in\R.\label{eq-control-of-the-second-term-in-differneces-equation-as-r-to-zero}
\end{align} 
By \eqref{blow-up-rate-at-x=0} and Corollary \ref{v'<0-cor}
\begin{equation}\label{vi-upper-bd2}
v_i(r)<\lambda^{-\frac{\rho_1}{(1-m)\beta}}r^{-\frac{\alpha_m}{\beta}} \qquad \forall r>0, i=1,2.
\end{equation}
Hence by \eqref{eq-step-for-removing-the-second-term-in-inequality-for-g-1-minus-g-2}, \eqref{eq-control-of-the-second-term-in-differneces-equation-as-r-to-zero}
and \eqref{vi-upper-bd2},
\begin{equation}\label{eq-step-for-removing-the-second-term-in-inequality-for-g-1-minus-g-2-1}
\lim_{r_1\to 0}\frac{\left(v_1-v_2\right)_+(r_1)}{e^{\beta\int_{r_1}^{r_2}\rho v_1^{1-m}\,d\rho}}=0\qquad \mbox{and} \qquad \lim_{r_1\to 0}\frac{\int_{r_1}^{r_2}\rho^{1+\frac{2\rho_1-\alpha_m}{\beta}}v_1(\rho)^{1-m}\left(e^{\beta\int_{r_1}^{\rho}s v_1(s)^{1-m}\,ds}\right)\,d\rho}{e^{\beta\int_{r_1}^{r_2}\rho v_1(\rho)^{1-m}\,d\rho}}=0.
\end{equation}
By \eqref{eq-aligned-inequality-of-difference-of-g-1-abd-g-2-on-some-set-1} and \eqref{eq-step-for-removing-the-second-term-in-inequality-for-g-1-minus-g-2-1}, 
\begin{equation}\label{eq-positive-blanket-of-difference-of-g_1-and-g_2}
\left(v_1-v_2\right)_+(r)\leq 0 \qquad \forall 0\leq r<r_0.
\end{equation}
Similarly
\begin{equation}\label{eq-negative-blanket-of-difference-of-g_1-and-g_2}
\left(v_1-v_2\right)_-(r)\leq 0 \qquad \forall 0\leq r<r_0.
\end{equation}
By \eqref{eq-positive-blanket-of-difference-of-g_1-and-g_2} and \eqref{eq-negative-blanket-of-difference-of-g_1-and-g_2},
\begin{equation}\label{local-uniqueness-eqn}
v_1(r)=v_2(r) \qquad \forall  0\leq r<r_0.
\end{equation}
Then by \eqref{local-uniqueness-eqn} and standard O.D.E. theory,  \eqref{eq-uniqueness-of-solution-g-1-and-g-2-w-r-t-initial-constant-lambda} holds.
\end{proof}

\section{Decay estimates of solutions of the elliptic logarithmic  equation}
\setcounter{equation}{0}
\setcounter{thm}{0}

In this section we will prove the decay rate of solutions of the elliptic logarithmic  equation \eqref{elliptic-log-eqn}.

\begin{lemma}\label{cf-Lemma-2-1-of-Hs2}
Let $n\geq 3$, $\beta\in\R$, $\rho_1>0$ and $\alpha=2\beta+\rho_1$. Let $v=v^{(0)}$ be a radially symmetric solution of \eqref{elliptic-log-eqn} in $\R^n\bs B_1$ and $w(r)=r^2v(r)$. Suppose that there exists a constant $C_0>0$ such that
\begin{equation}\label{eq-upper-bound-of-V-equal-v-and-r-2}
w(r)\leq C_0 \qquad \forall r\geq 1.
\end{equation}
Then, any sequence $\left\{w(r_i)\right\}_{i=1}^{\infty}$, $r_i\to\infty$ as $i\to\infty$, has a subsequence $\left\{w(r_i')\right\}_{i=1}^{\infty}$ such that 
\begin{equation}\label{eq-limit-of-U-r-2-u-as-t-to-infty}
\lim_{i\to\infty}w(r_i')=\left\{
\begin{aligned}
&0\quad\mbox{ or }\quad w_{\infty}\quad\mbox{ if }v\notin L^1(\R^n\setminus B_1)\\
&0\quad\mbox{ or }\quad w_1\quad\mbox{ if }v\in L^1(\R^n\setminus B_1)\quad  \mbox{ and}\quad \beta>0\\
&0\qquad\qquad\quad\mbox{ if }v\in L^1(\R^n\setminus B_1)\quad\mbox{ and }\quad \beta\leq 0
\end{aligned}\right.
\end{equation}
where
\begin{equation*}
w_{\infty}=\frac{2\left(n-2\right)}{\alpha-2\beta} \qquad \mbox{and} \qquad w_1=\frac{2}{\beta}.
\end{equation*}
\end{lemma}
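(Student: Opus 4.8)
The plan is to convert \eqref{elliptic-log-eqn} into a planar autonomous ODE system and read off the admissible limits from its rest points. Set $w(r)=r^2v(r)$; by hypothesis $0<w\le C_0$, and $w$ is smooth. Passing to $s=\log r$ and denoting $d/ds$ by a dot, substitution of $v=w/r^2$ into \eqref{elliptic-log-eqn} (using $\alpha-2\beta=\rho_1$) shows that $(w,y)$, with $y:=rw'/w$, solves
\[
\dot w=wy,\qquad\dot y=2(n-2)-(n-2)\,y-\beta w y-\rho_1 w .
\]
Multiplying \eqref{elliptic-log-eqn} by $r^{n-1}$ and integrating once gives, moreover, the first integral
\[
y-2+\beta w=G(r):=r^{-(n-2)}\Bigl(K-(\alpha-n\beta)\!\int_{r_0}^{r}\rho^{\,n-1}v\,d\rho\Bigr),\qquad r>r_0>1 ,
\]
with $K$ a constant and $G$ itself satisfying $\dot G+(n-2)G=-(\alpha-n\beta)w$. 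Since $\rho^{n-1}v=\rho^{n-3}w\le C_0\rho^{n-3}$, the integral is $O(r^{n-2})$, so $G$ — and hence $y$ — is bounded; thus $(w(s),y(s))$ is a bounded orbit of the above system, whose only rest points are $(w_\infty,0)$ with $w_\infty=\tfrac{2(n-2)}{\rho_1}=\tfrac{2(n-2)}{\alpha-2\beta}$ and $(0,2)$, the latter lying on the invariant line $\{w=0\}$. The value $w_1=2/\beta$ will emerge only when $G(r)\to0$.

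Assume first $v\notin L^1(\R^n\setminus B_1)$ and $\beta\ge0$. The main tool is the Lyapunov functional $E(s)=\tfrac12 y^2+\rho_1 w-2(n-2)\log w$; using the system one computes $\dot E=-y^2\bigl((n-2)+\beta w\bigr)\le0$. The function $g(w):=\rho_1 w-2(n-2)\log w$ is convex with minimum at $w_\infty$ and tends to $+\infty$ as $w\to0^+$ or $w\to\infty$, so $E$ is bounded below; being nonincreasing it converges to some $E_\infty$, and then $g(w)=E-\tfrac12 y^2\le E(r_0)$ keeps $w$ bounded away from $0$. Barbalat's lemma (valid because $\ddot E$ is bounded) gives $\dot E\to0$, hence $y\to0$ since $(n-2)+\beta w\ge n-2>0$; therefore $g(w(s))\to E_\infty\ge g(w_\infty)$. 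If $E_\infty>g(w_\infty)$, the level set $\{g=E_\infty\}$ consists of two points straddling $w_\infty$, and continuity of $w$ forces $w(s)\to w_*\ne w_\infty$; but then $\dot y\to2(n-2)-\rho_1 w_*\ne0$, contradicting the boundedness of $y$. Hence $E_\infty=g(w_\infty)$ and $w(r)\to w_\infty$, which yields the claim. For $\beta<0$ the identity only gives $\dot E\le0$ where $w\le(n-2)/|\beta|$, so one must first rule out that $w$ stays large — I expect this preliminary upper bound to be the principal difficulty — after which a Lyapunov functional of the same type (or the Poincar\'e--Bendixson theorem with a Dulac factor such as $1/w$) finishes the argument.

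Now assume $v\in L^1(\R^n\setminus B_1)$. Then $\int_{r_0}^{r}\rho^{n-1}v\,d\rho$ converges, so $G(r)\to0$, and the first integral becomes $\dot w=w(2-\beta w)+wG$ with $G\to0$. Setting $U=1/w\ge1/C_0$ turns this into the scalar linear equation $\dot U=-(2+G(s))\,U+\beta$ with $2+G(s)\to2>0$, so by variation of constants $U$, and hence $w(r)$, converges — to $w_1=2/\beta$ when $\beta>0$ and to $0$ when $\beta\le0$. (Since $\int_{r_0}^{\infty}\rho^{n-1}v\,d\rho<\infty$ forces $\liminf_{r\to\infty}w(r)=0$, the positive-limit possibilities are in fact vacuous here, but the stated conclusion holds either way.) As in every case $w(r)$ converges as $r\to\infty$, any sequence $\{w(r_i)\}$ with $r_i\to\infty$ converges to the common limit, which lies in the prescribed set; this proves the lemma.
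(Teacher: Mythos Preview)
Your dynamical-systems route is genuinely different from the paper's, and where it works it yields more (full convergence rather than merely constraining subsequential limits). But as written the proof is incomplete: you explicitly leave the case $\beta<0$, $v\notin L^1(\R^n\setminus B_1)$ unproved, saying only that you ``expect'' a preliminary bound plus a Lyapunov or Dulac argument to finish it. That is the gap. A secondary issue is the $L^1$ case with $\beta\le 0$: your variation-of-constants computation actually gives $U(s)\to\beta/2\le 0$, which contradicts $U=1/w\ge 1/C_0>0$ and shows the hypothesis is vacuous, rather than giving $w\to 0$ as you assert; the stated conclusion survives, but the reasoning should be corrected.

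The paper avoids both difficulties by a much shorter device that needs no sign condition on $\beta$. You already have the first integral $y-2+\beta w=G(r)$ (equivalently, the paper's formula for $r^{n-1}v'/v+\beta r^n v$). Integrate once more over $(r,\infty)$ to write $v(r)$ itself as a combination of three improper integrals; then, given any convergent subsequence $w(r_i)\to a_0\in[0,C_0]$, apply l'H\^opital's rule termwise to $r_i^{2}v(r_i)$. Two applications collapse the expression to
\[
a_0=\tfrac12\Bigl(\beta a_0^2+(\alpha-n\beta)\,a_0\,\lim_{i\to\infty}r_i^{-(n-2)}\!\!\int_1^{r_i}\!\rho^{n-1}v\,d\rho\Bigr).
\]
If $v\notin L^1$ the remaining limit is $\tfrac{a_0}{n-2}$ by one more l'H\^opital step, giving $a_0=\tfrac{\alpha-2\beta}{2(n-2)}a_0^2$, hence $a_0\in\{0,w_\infty\}$; if $v\in L^1$ the limit is $0$, giving $a_0=\tfrac{\beta}{2}a_0^2$, hence $a_0\in\{0,w_1\}$ for $\beta>0$ and $a_0=0$ for $\beta\le 0$. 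This algebraic route treats all $\beta\in\R$ at once and matches exactly the (weaker, subsequential) statement of the lemma; your phase-plane machinery is not needed for what is claimed, and in the $\beta<0$ case it is not yet complete.
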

\begin{proof}
We will use a modification of the proof of Lemma 2.1 of \cite{Hs4} to prove the lemma. Let $\left\{r_i\right\}_{i=1}^{\infty}$ be a sequence such that $r_i\to\infty$ as $i\to\infty$. By \eqref{eq-upper-bound-of-V-equal-v-and-r-2} the sequence $\left\{w(r_i)\right\}_{i=1}^{\infty}$ has a subsequence which we may assume without loss of generality to be the sequence itself that converges to some constant $a_0\in[0,C_0]$ as $i\to\infty$. Multiplying \eqref{elliptic-log-eqn} by $r^{n-1}$ and integrating over $(1,r)$,
\begin{equation}\label{eq-for-u-after-integration-once}
v'(r)=a_5\frac{v(r)}{r^{n-1}}-\beta rv^2(r)+\frac{(n\beta-\alpha)}{r^{n-1}}v(r)\int_1^{r}\rho^{n-1}v(\rho)\,d\rho \qquad \forall r\geq 1.
\end{equation}
where 
\begin{equation}\label{a5-defn}
a_5=v(1)^{-1}v'(1)+\beta v(1).
\end{equation} 
Integrating \eqref{eq-for-u-after-integration-once} over $(r,\infty)$, by \eqref{eq-upper-bound-of-V-equal-v-and-r-2} we have
\begin{equation}\label{eq-for-u-after-integration-twice}
v(r)=-a_5\int_{r}^{\infty}s^{1-n}v(s)\,ds+\beta\int_{r}^{\infty}sv^2(s)\,ds+(\alpha-n\beta)\int_{r}^{\infty}s^{1-n}v(s)\left(\int_{1}^{s}\rho^{n-1}v(\rho)\,d\rho\right)\,ds  \qquad \forall r>1.
\end{equation}
By \eqref{eq-upper-bound-of-V-equal-v-and-r-2}, \eqref{eq-for-u-after-integration-twice} and l'Hospital rule,
\begin{align}\label{eq-aligned-equation-for-limit-of-r-2-u-1}
a_0&=\lim_{i\to\infty}r_i^2v(r_i)\notag\\
&=-a_5\lim_{i\to\infty}\frac{\int_{r_i}^{\infty}s^{1-n}v(s)\,ds}{r_i^{-2}}+\beta\lim_{i\to\infty}\frac{\int_{r_i}^{\infty}sv^2(s)\,ds}{r_i^{-2}}+(\alpha -n\beta)\lim_{i\to\infty}\frac{\int_{r_i}^{\infty}s^{1-n}v(s)\left(\int_1^{s}\rho^{n-1}v(\rho)\,d\rho\right)\,ds}{r_i^{-2}}\notag\\
&=\frac{1}{2}\left(-a_5\lim_{i\to\infty}\frac{v(r_i)}{r_i^{n-4}}+\beta\lim_{i\to\infty}\frac{r_iv^2(r_i)}{r_i^{-3}}+(\alpha -n\beta)\lim_{i\to\infty}\frac{r_i^{1-n}v(r_i)\int_1^{r_i}\rho^{n-1}v(\rho)\,d\rho}{r_i^{-3}}\right)\notag\\
&=\frac{1}{2}\left(\beta a_0^2+(\alpha -n\beta)\lim_{i\to\infty}\frac{r_i^2v(r_i)\int_1^{r_i}\rho^{n-1}v(\rho)\,d\rho}{r_i^{n-2}}\right)\notag\\
&=\frac{1}{2}\left(\beta a_0^2+(\alpha -n\beta)a_0\lim_{i\to\infty}\frac{\int_1^{r_i}\rho^{n-1}v(\rho)\,d\rho}{r_i^{n-2}}\right).
\end{align}
If $v\not\in L^1(\R^n\setminus B_1)$, then by \eqref{eq-aligned-equation-for-limit-of-r-2-u-1} and the l'Hospital rule,
\begin{align}\label{eq-limit-a-to-zero-or-w-infty}
&a_0=\frac{1}{2}\left(\beta a_0^2+\frac{(\alpha -n\beta)}{n-2}a_0\lim_{i\to\infty}r_i^2v(r_i)\right)=\frac{\alpha-2\beta}{2(n-2)}a_0^2\notag\\
\Rightarrow\quad&a_0=0 \qquad \mbox{or} \qquad a_0=\frac{2(n-2)}{\alpha-2\beta}=w_{\infty}.
\end{align}
If  $v\in L^1(\R^n\setminus B_1)$, then by \eqref{eq-aligned-equation-for-limit-of-r-2-u-1},
\begin{equation}\label{eq-lim-a-to-zero-or-w-1}
a_0=\frac{\beta}{2}a_0^2 \qquad \Rightarrow \qquad\left\{\begin{aligned}
&a_0=0 \quad \mbox{or} \quad a_0=\frac{2}{\beta}=w_1\quad\mbox{ if }\beta>0\\
&a_0=0\qquad\qquad\qquad\qquad\quad\mbox{ if }\beta\le 0.\end{aligned}\right.
\end{equation} 
By \eqref{eq-limit-a-to-zero-or-w-infty} and \eqref{eq-lim-a-to-zero-or-w-1}, the lemma follows.
\end{proof}

\begin{cor}\label{cf-Corollary-2-3-of-cite-Hs2}
Let $n\geq 3$, $\beta\in\R$, $\rho_1>0$ and $\alpha=2\beta+\rho_1$. Let $v=v^{(0)}$ be a radially symmetric solution of \eqref{elliptic-log-eqn} in $\R^n\bs B_1$ and $w(r)=r^2v(r)$. Suppose that there exist constants $C_0>C_1>0$ such that
\begin{equation*}
C_1\leq w(r)\leq C_0 \qquad \forall r\geq 1.
\end{equation*}
Then \eqref{eq-limit-of-r-2-u-r-at-infty-00} holds.
\end{cor}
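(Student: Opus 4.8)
The plan is to obtain Corollary \ref{cf-Corollary-2-3-of-cite-Hs2} as a quick consequence of Lemma \ref{cf-Lemma-2-1-of-Hs2}, by ruling out every possibility in \eqref{eq-limit-of-U-r-2-u-as-t-to-infty} except the value $w_\infty$ and then upgrading subsequential convergence to genuine convergence.

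First I would observe that the lower bound $w(r)\ge C_1>0$ forces $v\notin L^1(\R^n\bs B_1)$: writing $w(r)=r^2v(r)$ and using $n\ge 3$, one has $\int_{\R^n\bs B_1}v\,dx=\omega_{n-1}\int_1^\infty r^{n-3}w(r)\,dr\ge\omega_{n-1}C_1\int_1^\infty r^{n-3}\,dr=+\infty$, where $\omega_{n-1}$ denotes the area of the unit sphere in $\R^n$. Hence only the first alternative in \eqref{eq-limit-of-U-r-2-u-as-t-to-infty} is available, and Lemma \ref{cf-Lemma-2-1-of-Hs2} yields that every sequence $r_i\to\infty$ admits a subsequence $r_i'$ along which $w(r_i')$ converges either to $0$ or to $w_\infty=\frac{2(n-2)}{\alpha-2\beta}$. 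Next I would use the lower bound once more to discard the value $0$: any subsequential limit of $w$ at infinity must be $\ge C_1>0$, so it is necessarily equal to $w_\infty$.

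Finally, since $w$ takes values in the compact interval $[C_1,C_0]$, a standard Bolzano--Weierstrass argument promotes the statement "every convergent subsequence of $\{w(r_i)\}$ has limit $w_\infty$" to $\lim_{r\to\infty}w(r)=w_\infty$, that is, to \eqref{eq-limit-of-r-2-u-r-at-infty-00}. I do not expect a real obstacle here; the only two points deserving a little care are the divergence of $\int_1^\infty r^{n-3}w(r)\,dr$ — which is precisely where the hypothesis $n\ge 3$ enters, guaranteeing that $v$ is not integrable near infinity and hence that the first case of Lemma \ref{cf-Lemma-2-1-of-Hs2} applies — and the passage from subsequential limits to the full limit, both of which are routine.
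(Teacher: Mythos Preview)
Your proposal is correct and matches the paper's intended approach: the paper states the result as an immediate corollary of Lemma \ref{cf-Lemma-2-1-of-Hs2} without writing out a proof, and your argument --- using the lower bound $w\ge C_1$ first to force $v\notin L^1(\R^n\setminus B_1)$ (since $\int_1^\infty r^{n-3}w(r)\,dr=\infty$ for $n\ge 3$) and then to exclude the subsequential limit $0$, followed by a compactness argument to pass to the full limit --- is exactly the deduction the paper has in mind.
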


\begin{lemma}\label{cf-Lemma-2-4-of-Hs2}
Let $n\geq 3$, $\rho_1>0$, $\beta>\beta_{1}^{(0)}:=\frac{\rho_1}{n-2}$ and $\alpha=2\beta+\rho_1$. Let $v=v^{(0)}$ be a radially symmetric solution of \eqref{elliptic-log-eqn} in $\R^n\setminus B_1$  and $w(r)=r^2v(r)$. Then there exists a constant $C_1>0$ such that
\begin{equation}\label{eq-lower-bound-of-w-in-case-of-alpha-bigger-tnan-n-beta}
w(r)\ge C_1 \qquad \forall r\ge 1. 
\end{equation}
\end{lemma}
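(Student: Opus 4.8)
The plan is to distill from the already-established first-order identity \eqref{eq-for-u-after-integration-once} a clean differential inequality for $w(r)=r^2v(r)$ and then close the estimate with a barrier/continuity argument. Starting from \eqref{eq-for-u-after-integration-once}, divide by $v(r)>0$ and observe that $n\beta-\alpha=(n-2)\beta-\rho_1>0$ is precisely the hypothesis $\beta>\beta_{1}^{(0)}=\frac{\rho_1}{n-2}$; since $\int_1^r\rho^{n-1}v(\rho)\,d\rho\ge 0$, the nonlocal term carries a favourable sign and may be discarded. Using $w=r^2v$ and $(\log w)'=(\log v)'+\frac{2}{r}$, this gives
\[
(\log w)'(r)\ \ge\ \frac{a_5}{r^{n-1}}+\frac{2-\beta w(r)}{r},\qquad r\ge 1,
\]
with $a_5$ as in \eqref{a5-defn}. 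This is the only place the hypothesis $\beta>\beta_{1}^{(0)}$ enters, and I expect it to be the conceptual crux: the role of the threshold $\beta_{1}^{(0)}$ is exactly to make the integral term drop out with the correct sign.

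Next I would isolate the mechanism that prevents $w$ from getting small. Choose $R_1\ge 1$ large enough that $|a_5|\,r^{2-n}\le\frac12$ for all $r\ge R_1$ (possible since $n\ge 3$; if $a_5\ge 0$ one may take $R_1=1$). Then for any $r\ge R_1$ with $w(r)\le\frac1\beta$ one has $2-\beta w(r)\ge 1$, so the displayed inequality yields $(\log w)'(r)\ge\frac1r\big(1+\frac{a_5}{r^{n-2}}\big)\ge\frac1{2r}>0$. In other words, beyond the radius $R_1$ the function $w$ is strictly increasing at every point where it lies in the regime $\{w\le 1/\beta\}$, so it can never drop back below a level it has attained.

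Finally I would conclude by compactness on $[1,R_1]$ together with a connectedness argument beyond it. Set $m_0=\min_{[1,R_1]}w>0$ (positive since $w$ is continuous and strictly positive on this compact interval) and $C_1=\min\{m_0,1/\beta\}>0$. On $[1,R_1]$ one has $w\ge C_1$ trivially. If $w(r_2)<C_1$ for some $r_2>R_1$, let $r_*=\max\{r\in[R_1,r_2]:w(r)\ge C_1\}$; then $R_1\le r_*<r_2$, $w(r_*)=C_1$, and $w<C_1\le 1/\beta$ throughout $(r_*,r_2]$, so by the previous step $w$ is strictly increasing on $[r_*,r_2]$, forcing $w(r_2)>w(r_*)=C_1$, a contradiction. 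Hence $w(r)\ge C_1$ for all $r\ge 1$, which is \eqref{eq-lower-bound-of-w-in-case-of-alpha-bigger-tnan-n-beta}. Apart from recognizing the role of $\beta_{1}^{(0)}$, the only technical nuisance is that $a_5$ may be negative, which is why the argument is anchored at $r=R_1$ rather than at $r=1$; the remaining steps are routine.
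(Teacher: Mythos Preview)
Your proof is correct and uses the same starting point as the paper---namely, dividing \eqref{eq-for-u-after-integration-once} through and discarding the integral term via the sign of $n\beta-\alpha>0$, which is exactly the content of $\beta>\beta_1^{(0)}$. The difference lies in how the resulting differential inequality is closed. The paper absorbs the $a_5$ term with the integrating factor $H(r)=e^{-\frac{|a_5|}{n-2}r^{2-n}}v(r)$, obtains the Bernoulli-type inequality $-H^{-2}H'\le \beta e^{|a_5|/(n-2)}\,r$, and integrates it directly to get the explicit pointwise bound $v(r)\ge\big(\tfrac{\beta}{2}e^{|a_5|/(n-2)}r^2+H(1)^{-1}\big)^{-1}$ for all $r\ge1$. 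Your route instead passes to $\log w$, localizes to a radius $R_1$ beyond which the $a_5$ term is harmless, and finishes with a barrier/continuity argument showing $w$ cannot dip below $\min\{\min_{[1,R_1]}w,\,1/\beta\}$. The paper's argument is a bit more quantitative (it yields an explicit constant in one stroke), while yours is arguably more transparent about the mechanism and avoids the integrating-factor device; both are short and entirely adequate for the lemma as stated.
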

\begin{proof}
By \eqref{eq-for-u-after-integration-once},
\begin{equation}\label{v'-lower-ineqn}
v'(r)+\beta rv^2(r)+|a_5|r^{1-n}v(r)\ge 0\quad \forall r\ge 1
\end{equation}
where $a_5$ is given by \eqref{a5-defn}. Let $H(r)=e^{-\frac{|a_5|}{n-2}r^{2-n}}v(r)$. Then by \eqref{v'-lower-ineqn},
\begin{align}
&H'(r)\ge-\beta  e^{\frac{|a_5|}{n-2}r^{2-n}}rH(r)^2\ge -\beta  e^{\frac{|a_5|}{n-2}}rH(r)^2\quad \forall r\ge 1\label{eq-integral-equationfor-v-of-log-diff-after-integration-8}\\
\Rightarrow\quad &-H(r)^{-2}H'(r)\le\beta  e^{\frac{|a_5|}{n-2}}r\qquad \qquad \qquad \qquad \forall r\ge 1\notag\\
\Rightarrow\quad &v(r)\ge H(r)\ge\left(\frac{\beta  e^{\frac{|a_5|}{n-2}}}{2}r^2+H(1)^{-1}\right)^{-1}\qquad \quad\forall r\ge 1\label{v-H-lower-bd}.
\end{align}
By \eqref{v-H-lower-bd} there exists a constant $C_1>0$ such that
\eqref{eq-lower-bound-of-w-in-case-of-alpha-bigger-tnan-n-beta} holds and the lemma follows.
\end{proof}

\begin{lemma}\label{cf-Lemma-2-2-of-cite-Hs3}
Let $n\geq 3$, $\rho_1>0$, $\beta\leq\beta_1^{(0)}:=\frac{\rho_1}{n-2}$ and $\alpha=2\beta+\rho_1$.  Let $v=v^{(0)}$ be a radially symmetric solution of \eqref{elliptic-log-eqn} in $\R^n\setminus B_1$  and $w(r)=r^2v(r)$. Then there exists a constant $C_1>0$ such that \eqref{eq-lower-bound-of-w-in-case-of-alpha-bigger-tnan-n-beta} holds.
\end{lemma}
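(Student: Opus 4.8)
The plan is to recast \eqref{elliptic-log-eqn} as a second order ODE for $w(r)=r^2v(r)$ and to run an energy argument. The new difficulty compared with Lemma \ref{cf-Lemma-2-4-of-Hs2} is that, since $\beta\le\beta_1^{(0)}$ forces $\alpha-n\beta\ge 0$, the integral term $\frac{n\beta-\alpha}{r^{n-1}}v\int_1^r\rho^{n-1}v\,d\rho$ in \eqref{eq-for-u-after-integration-once} is now non--positive and cannot simply be discarded as it was in the proof of Lemma \ref{cf-Lemma-2-4-of-Hs2}; hence the plain Riccati comparison used there is unavailable. Substituting $v=w/r^2$ into \eqref{elliptic-log-eqn} and using $\alpha-2\beta=\rho_1$ gives
\[
w''-\frac{(w')^2}{w}+\frac{(n-1+\beta w)w'}{r}+\frac{\rho_1 w\,(w-w_\infty)}{r^2}=0\qquad\forall r\ge 1,
\]
where $w_\infty=\tfrac{2(n-2)}{\alpha-2\beta}$ is the constant appearing in Lemma \ref{cf-Lemma-2-1-of-Hs2}; the constant $w\equiv w_\infty$ (i.e.\ $v=w_\infty r^{-2}$) solves it.

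The key step is to introduce
\[
L(r)=\frac12\left(\frac{rw'(r)}{w(r)}\right)^2+\rho_1 w(r)-2(n-2)\log w(r),
\]
and to verify, using the displayed ODE together with $\rho_1 w_\infty=2(n-2)$, the identity
\[
L'(r)=-\frac{r\,(w'(r))^2}{w(r)^2}\,\bigl(n-2+\beta w(r)\bigr)\qquad\forall r\ge 1 .
\]
If $\beta\ge 0$ then $n-2+\beta w\ge n-2>0$, so $L$ is non--increasing and $L(r)\le L(1)$ for all $r\ge 1$; hence $\rho_1 w(r)-2(n-2)\log w(r)\le L(1)$. Since the function $s\mapsto\rho_1 s-2(n-2)\log s$ tends to $+\infty$ as $s\to 0^+$ and as $s\to+\infty$, its sublevel set at height $L(1)$ is a compact interval $[\delta,\delta']\subset(0,\infty)$; therefore $\delta\le w(r)\le\delta'$ for all $r\ge 1$, which gives \eqref{eq-lower-bound-of-w-in-case-of-alpha-bigger-tnan-n-beta} with $C_1=\delta$ (and, as a by--product, the matching upper bound needed in Corollary \ref{cf-Corollary-2-3-of-cite-Hs2}). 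Since $\beta_1^{(0)}=\tfrac{\rho_1}{n-2}>0$, this already settles the only range, $0<\beta\le\beta_1^{(0)}$, that is used in the proof of Theorem \ref{thm-behaviour-of-v-m-at-infty}.

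The main obstacle is the remaining range $\beta<0$: there $n-2+\beta w$ changes sign at $w=(n-2)/|\beta|$, so $L$ is no longer globally monotone. The way out is to first obtain an a priori upper bound $w(r)\le C_0$ for large $r$ (for instance by a barrier argument for \eqref{elliptic-log-eqn}, or by ruling out unbounded growth via the first order system $rw'/w=P$, $rP'=-(n-2+\beta w)P-\rho_1(w-w_\infty)$); once $w$ is bounded, Lemma \ref{cf-Lemma-2-1-of-Hs2}, applied with $v\notin L^1(\R^n\setminus B_1)$ (which one checks directly from \eqref{eq-for-u-after-integration-once}), forces $w(r)\to 0$ or $w(r)\to w_\infty$ as $r\to\infty$, and the first alternative is excluded: if $w(r)\to 0$ then for $r$ large enough that $w(r)<(n-2)/|\beta|$ one has $L'\le 0$, so $L$ stays bounded while $\rho_1 w(r)-2(n-2)\log w(r)\to+\infty$, a contradiction. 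Hence $w(r)\to w_\infty$, so $w$ is bounded below for large $r$; combined with $w(r)\ge\min_{[1,R]}w>0$ on compact intervals (by continuity and positivity of $v$), \eqref{eq-lower-bound-of-w-in-case-of-alpha-bigger-tnan-n-beta} follows in this case too.
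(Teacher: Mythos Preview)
Your proof for $\beta\ge 0$ is correct and takes a genuinely different route from the paper. You introduce the Lyapunov functional $L(r)=\tfrac12(rw'/w)^2+\rho_1 w-2(n-2)\log w$ and verify $L'(r)=-r(w')^2(n-2+\beta w)/w^2\le 0$; since $s\mapsto\rho_1 s-2(n-2)\log s$ is coercive on $(0,\infty)$, this traps $w$ in a compact interval and delivers \emph{both} the lower bound of the present lemma and the upper bound of Lemma~\ref{cf-Lemma-2-5-of-cite-Hs2} in one stroke. The paper instead integrates the equation once to obtain
\[
\frac{rw'(r)}{w(r)}=2-\beta w(r)+\frac{n\beta-\alpha}{r^{n-2}}\int_1^r\rho^{n-3}w(\rho)\,d\rho+\frac{a_6}{r^{n-2}},
\]
and argues on the sublevel set $\mathcal{B}=\{r>1:w(r)\le 1/\rho_1\}$: if $\mathcal{B}$ is unbounded one shows $\liminf_{r\to\infty,\,r\in\mathcal{B}}rw'/w\ge 2-\tfrac{1}{n-2}>0$, which forces $w$ to be eventually increasing on $\mathcal{B}$ and hence bounded below; the upper bound is obtained separately in Lemma~\ref{cf-Lemma-2-5-of-cite-Hs2}. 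Your energy argument is cleaner and merges these two lemmas in the range $0\le\beta\le\beta_1^{(0)}$, which, as you correctly observe, is the only range actually invoked in the proof of Theorem~\ref{thm-behaviour-of-v-m-at-infty}.

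For $\beta<0$, however, your argument has a genuine gap: the a~priori upper bound on $w$ that you need in step one is only gestured at (``by a barrier argument \dots\ or by ruling out unbounded growth''), not established, and without it you cannot invoke Lemma~\ref{cf-Lemma-2-1-of-Hs2}. Note too that Lemma~\ref{cf-Lemma-2-5-of-cite-Hs2} itself uses the present lemma (through the estimate $r^{n-1}v(r)\ge C_1 r^{n-3}$), so you cannot appeal to it here without circularity. The paper's integral-identity argument, by contrast, handles all $\beta\le\beta_1^{(0)}$ uniformly and needs no prior upper bound on $w$.
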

\begin{proof}
As observed in \cite{Hs1}, $w$ satisfies
\begin{equation}\label{eq-for-w-by-direct-computation}
\left(\frac{w'}{w}\right)'+\frac{n-1}{r}\cdot\frac{w'}{w}+\frac{\beta}{r}w'+\frac{(\alpha-2\beta)w-2(n-2)}{r^2}=0\quad \forall r\ge 1.
\end{equation}
Multiplying \eqref{eq-for-w-by-direct-computation} by $r^{n-1}$ and  integrating over $(1,r)$, $r>1>0$, 
\begin{align}
&\frac{r^{n-1}w'(r)}{w(r)}+\beta r^{n-2}w(r)\notag\\
&\qquad\quad =\frac{w'(1)}{w(1)}+\beta w(1)+(n\beta-\alpha)\int_1^{r}\rho^{n-3}w(\rho)\,d\rho+2(r^{n-2}-1)\quad\forall r>1\notag\\
\Rightarrow\quad &\frac{rw'(r)}{w(r)}=2-\beta w(r)+\frac{(n\beta-\alpha)}{r^{n-2}}\int_{0}^{r}\rho^{n-3}w(\rho)\,d\rho+\frac{a_6}{r^{n-2}} \quad \forall r>1\label{eq-for-w-after-integration-once-23}
\end{align}
where $a_6=w(1)^{-1}w'(1)+\beta w(1)-2$. Let 
\begin{equation*}
\mathcal{B}=\left\{r>1:w(r)\leq\frac{1}{\rho_1}\right\}.
\end{equation*} 
If there is a constant $R_0>1$ such that $\mathcal{B}\cap[R_0,\infty)=\emptyset$, then
\begin{equation*}
w(r)\geq \frac{1}{\rho_1} \qquad \forall r\geq R_0
\end{equation*}
and \eqref{eq-lower-bound-of-w-in-case-of-alpha-bigger-tnan-n-beta} follows. Hence we may assume that 
\begin{equation}\label{eq-existence-of-point-in-B-and-r-to-infty-for-all-r-greater-tahn-1}
\mathcal{B}\cap[R_1,\infty)\neq\emptyset \qquad \forall R_1>1.
\end{equation}
If $\int_1^{\infty}\rho^{n-3}w(\rho)\,d\rho=\infty$ holds, then
by \eqref{eq-for-w-after-integration-once-23} and the l'Hospital rule,
\begin{equation*}
\liminf_{\begin{subarray}{c}r\to\infty\\r\in\mathcal{B}\end{subarray}}\frac{rw'(r)}{w(r)}\geq 2-\frac{\beta}{\rho_1}-\frac{\alpha-n\beta}{n-2}\cdot\frac{1}{\rho_1}=2-\frac{1}{n-2}>0.
\end{equation*}
If $\int_1^{\infty}\rho^{n-3}w(\rho)\,d\rho<\infty$ holds, then by \eqref{eq-for-w-after-integration-once-23},
\begin{equation*}
\liminf_{\begin{subarray}{c}r\to\infty\\r\in\mathcal{B}\end{subarray}}\frac{rw'(r)}{w(r)}\geq 2-\frac{\beta}{\rho_1}>2-\frac{1}{n-2}>0.
\end{equation*}
Hence in both cases there exists a constant $R_2\in\mathcal{B}$ such that 
\begin{equation}\label{eq-positivity-of-w-prime-over-R-3-to-infty-and-in-B}
w'(r)>0 \quad\forall r\in\mathcal{B}\cap [R_2,\infty). 
\end{equation}
Suppose that there exists a constant $R_3>R_2$ such that $R_3\notin\mathcal{B}$. Let
\begin{equation*}
R_4=\sup\left\{r_1>R_3:w(r)>\frac{1}{\rho_1}\quad\forall R_3\le r<r_1\right\}.
\end{equation*}
By \eqref{eq-existence-of-point-in-B-and-r-to-infty-for-all-r-greater-tahn-1},
\begin{equation*}
R_4<\infty \qquad \Rightarrow \qquad w(R_4)=\frac{1}{\rho_1}, \quad R_4\in \mathcal{B}\quad\mbox{ and }\quad w'(R_4)\leq 0
\end{equation*}
which contradicts \eqref{eq-positivity-of-w-prime-over-R-3-to-infty-and-in-B}. Thus no such point $R_3$ exists. Hence
\begin{equation}\label{B-contain-right-interval}
[R_2,\infty)\subset\mathcal{B}.
\end{equation}
By \eqref{eq-positivity-of-w-prime-over-R-3-to-infty-and-in-B} and \eqref{B-contain-right-interval},
\begin{equation*}
w(r)\ge w(R_2)\quad\forall r\ge R_2
\end{equation*}
and the lemma follows.
\end{proof}

\begin{lemma}\label{cf-Lemma-2-5-of-cite-Hs2}
Let $n\geq 3$, $\rho_1>0$, $\beta\leq\beta_1^{(0)}:=\frac{\rho_1}{n-2}$ and $\alpha=2\beta+\rho_1$. Let $v=v^{(0)}$ be a radially symmetric solution of \eqref{elliptic-log-eqn} in $\R^n\setminus B_1$ and $w(r)=r^2v(r)$. Then there exists a constant $C_0>0$ such that
\eqref{eq-upper-bound-of-V-equal-v-and-r-2} holds.
\end{lemma}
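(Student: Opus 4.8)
The plan is to analyse the ODE \eqref{eq-for-w-by-direct-computation} satisfied by $w(r)=r^{2}v(r)$ together with its once‑integrated form \eqref{eq-for-w-after-integration-once-23},
\begin{equation*}
\frac{rw'(r)}{w(r)}=2-\beta w(r)+\frac{(n\beta-\alpha)}{r^{n-2}}\int_1^{r}\rho^{n-3}w(\rho)\,d\rho+\frac{a_6}{r^{n-2}},\qquad r>1,
\end{equation*}
and to use two structural facts valid under the hypothesis $\beta\le\beta_1^{(0)}=\frac{\rho_1}{n-2}$: one has $\alpha-2\beta=\rho_1>0$, and $n\beta-\alpha=(n-2)\beta-\rho_1\le 0$, so the integral term above is non‑positive. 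First I would dispose of the easy alternative: the argument carried out inside the proof of Lemma \ref{cf-Lemma-2-2-of-cite-Hs3} shows that either $w(r)\le 1/\rho_1$ for all $r\ge R_2$ (some $R_2>1$), in which case \eqref{eq-upper-bound-of-V-equal-v-and-r-2} is immediate with $C_0=\max\{1/\rho_1,\ \max_{1\le r\le R_2}w(r)\}$, or else there is $R_0>1$ with $w(r)>1/\rho_1$ for all $r\ge R_0$. Assume the latter, argue by contradiction supposing $\limsup_{r\to\infty}w(r)=\infty$, and enlarge $R_0$ so that also $|a_6|\,r^{-(n-2)}\le 1$ for $r\ge R_0$.

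For $\beta>0$ the contradiction is then quick, by a first‑crossing (maximum principle) argument. Given any height $M>\max\{w(R_0),\,3/\beta\}$, let $\bar r$ be the smallest $r>R_0$ with $w(\bar r)=M$; then $w'(\bar r)\ge 0$, so the left side of the displayed identity at $\bar r$ is $\ge 0$, while, discarding the non‑positive integral term, its right side is $\le 2-\beta M+1=3-\beta M<0$, a contradiction. Hence $w\le\max\{w(R_0),3/\beta\}$ on $[R_0,\infty)$, and, including $\sup_{[1,R_0]}w$ in the constant, \eqref{eq-upper-bound-of-V-equal-v-and-r-2} follows. (This sub‑argument in fact needs neither the dichotomy nor the reduction above.)

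The case $\beta\le 0$ is the main obstacle, since now $-\beta w(r)\ge 0$ and the decisive negativity must be extracted from the good integral term $\tfrac{n\beta-\alpha}{r^{n-2}}\int_1^{r}\rho^{n-3}w$. The idea is that $w$ cannot reach a large value $M$ without being large on a whole interval, which forces that integral to be large: at a local maximum $r_0$ of $w$ with $w(r_0)=M$, \eqref{eq-for-w-by-direct-computation} gives $w''(r_0)/w(r_0)=-(\rho_1M-2(n-2))/r_0^{2}$, so (controlling the variation of $w''$ on the relevant scale) $w\ge M/2$ on an interval about $r_0$ of length $\gtrsim r_0/\sqrt M$, whence $W(r_0):=r_0^{-(n-2)}\int_1^{r_0}\rho^{n-3}w\,d\rho\gtrsim\sqrt M$; but $w'(r_0)=0$ turns the integrated identity into $(\alpha-n\beta)\,W(r_0)=2-\beta M+o(1)$. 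When $\beta=0$ the right side is bounded, contradicting $\sqrt M\to\infty$, while the case of $w$ monotone increasing to $\infty$ is even simpler, since then $W(r)\gtrsim w(r/2)\to\infty$ and the identity forces $rw'/w\to-\infty$, contradicting $w'\ge 0$. When $\beta<0$ one has to iterate: the identity then additionally forces $w$ to be comparable to $M$ on a fixed proportion of $[1,r_0]$ (writing $w=rW'+(n-2)W$ and using $\alpha-n\beta=\rho_1+(n-2)|\beta|$, $\alpha-2\beta=\rho_1$), which, substituted into a Gronwall‑type inequality for $F(r)=\int_{R_0}^{r}s^{-1}W(s)\,ds$ (of the shape $rF'(r)\ge(\rho_1/|\beta|)F(r)-C\log r-C$), rules out $W(r)\to\infty$ and hence bounds $w$. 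This bootstrap for $\beta<0$ is the delicate technical step I expect to be the crux; once it is in place, $w$ is bounded on $[R_0,\infty)$, hence on all of $[1,\infty)$ by continuity on $[1,R_0]$, which is \eqref{eq-upper-bound-of-V-equal-v-and-r-2}.
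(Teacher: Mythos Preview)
Your approach is genuinely different from the paper's, and you are missing the main simplifying device the paper exploits.

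The paper's proof does not work with $w$ and the identity \eqref{eq-for-w-after-integration-once-23} at all. Instead it uses the monotonicity $v'(r)<0$ (from Corollary \ref{v'<0-cor}), which gives $v(\rho)\ge v(r)$ for $1\le\rho\le r$ and hence
\[
(\alpha-n\beta)\int_1^r\rho^{n-1}v(\rho)\,d\rho\ \ge\ \frac{\alpha-n\beta}{n}\,r^n v(r)-C.
\]
Substituting this into the once--integrated equation \eqref{eq-for-u-after-integration-once} for $v$ (not $w$) yields
\[
\frac{v'(r)}{v(r)^2}+\frac{\alpha}{n}\,r\ \le\ \frac{C}{r^{n-3}w(r)}\ \le\ C_2,
\]
where the last step uses the lower bound $w\ge C_1$ from Lemma \ref{cf-Lemma-2-2-of-cite-Hs3}. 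This Bernoulli inequality integrates directly to $1/v(r)\ge \frac{\alpha}{4n}r^2+C'$ for large $r$, i.e.\ $w(r)\le C_0$. The whole argument is a handful of lines.

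Your first--crossing argument for $\beta>0$ is correct and rather neat; in that sub--case it is arguably cleaner than the paper's, since it uses neither the lower bound on $w$ nor the monotonicity of $v$. But your plan for $\beta\le 0$ is only a heuristic sketch: the curvature estimate at a local maximum does give $w\ge M/2$ on an interval of length $\sim r_0/\sqrt{M}$, yet turning that into the Gronwall bootstrap you outline (especially when $w$ might oscillate between peaks) is not at all routine, and you yourself flag it as the crux. The paper's monotonicity trick bypasses this difficulty in one stroke: once $v'<0$ is known, only $\alpha>0$ and $\alpha\ge n\beta$ enter, and the sign of $\beta$ is irrelevant. In the paper's actual setting $v^{(0)}$ is the singular solution on $\R^n\setminus\{0\}$ with $\beta>0$ satisfying \eqref{log v-soln-x=0-rate}, so Corollary \ref{v'<0-cor} is available; you should invoke it rather than attempt to squeeze the bound out of the $w$--equation alone.
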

\begin{proof}
By Corollary \ref{v'<0-cor} $v$ satisfies \eqref{v'<0-eqn1}. Since $\alpha\ge n\beta$, by \eqref{elliptic-log-eqn}, \eqref{log v-soln-x=0-rate}, \eqref{v'<0-eqn1} and Lemma \ref{cf-Lemma-2-2-of-cite-Hs3},
\begin{align}
r^{n-1}\frac{v'(r)}{v(r)}+\beta r^n v(r)=&\frac{v'(1)}{v(1)}+\beta v(1)-(\alpha-n\beta)\int_1^{r}\rho^{n-1}v(\rho)\,d\rho\quad \forall r>1\notag\\
\le&\frac{v'(1)}{v(1)}+\beta v(1)-(\alpha-n\beta)\int_1^{r}\rho^{n-1}v(r)\,d\rho\quad \forall r>1\notag\\
\le&a_5-\frac{(\alpha-n\beta)}{n}r^nv(r) \quad \qquad\qquad\qquad \qquad \forall r>1\notag\\
\Rightarrow \quad r^{n-1}\frac{v'(r)}{v(r)}+\frac{\alpha}{n} r^n v(r)\le&a_5\quad \forall r>1\notag\\
\Rightarrow \qquad\qquad\,\,\,\frac{v'(r)}{v(r)^2}+\frac{\alpha}{n} r\le&\frac{|a_5|}{r^{n-1}v(r)}\le\frac{C_2}{r^{n-3}}\leq C_2 \qquad\forall r>1 
\label{v-ineqn1}
\end{align}
for some constant $C_2>0$ where $a_5$ is given by \eqref{a5-defn}. Integrating \eqref{v-ineqn1} over $(1,r)$, 
\begin{equation}\label{eq-lower-bound-of-1-over-u-by-r-square-and-something}
\begin{aligned}
\frac{1}{v(r)}\geq \frac{\alpha r^2}{2n}-C_2(r-1)-\frac{\alpha}{2n}+\frac{1}{v(1)}\geq \frac{\alpha r^2}{4n}+C_3 \quad \forall r>\max\left(1,\frac{4nC_2}{\alpha}\right)
\end{aligned}
\end{equation}
where $C_3=C_2-\frac{\alpha}{2n}+\frac{1}{v(1)}$. Then by \eqref{v'<0-eqn1} and \eqref{eq-lower-bound-of-1-over-u-by-r-square-and-something},
\eqref{eq-upper-bound-of-V-equal-v-and-r-2} holds for some constant $C_0>0$ and the lemma follows.
\end{proof}

\begin{lemma}[cf. Lemma 2.6 of \cite{Hs4}]\label{cf-Lemma-2-6-of-cite-Hs2}
Let $n\geq 3$, $\rho_1>0$, $\beta>\beta_1^{(0)}=\frac{\rho_1}{n-2}$ and $\alpha=2\beta+\rho_1$. Let $v=v^{(0)}$ be a radially symmetric solution of \eqref{elliptic-log-eqn} in $\R^n\setminus B_1$ and $w(r)=r^2v(r)$. Then there exists a constant $C_0>0$ such that \eqref{eq-upper-bound-of-V-equal-v-and-r-2} holds. 
\end{lemma}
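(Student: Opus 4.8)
The plan is to argue exactly along the lines of the proof of Lemma~\ref{cf-Lemma-2-5-of-cite-Hs2}, but exploiting the reversed sign coming from $\beta>\beta_1^{(0)}$. Working with $w(r)=r^2v(r)$, I would start from the first-order identity \eqref{eq-for-w-after-integration-once-23} established in the proof of Lemma~\ref{cf-Lemma-2-2-of-cite-Hs3}, which I recall in the form
\begin{equation*}
\frac{rw'(r)}{w(r)}=2-\beta w(r)+\frac{(n\beta-\alpha)}{r^{n-2}}\int_1^r\rho^{n-3}w(\rho)\,d\rho+\frac{a_6}{r^{n-2}},\qquad r>1,
\end{equation*}
where $a_6=w(1)^{-1}w'(1)+\beta w(1)-2$. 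The point is that the hypothesis $\beta>\beta_1^{(0)}=\frac{\rho_1}{n-2}$ now reads $n\beta-\alpha=(n-2)\beta-\rho_1>0$, so the integral term is no longer of the good sign; the algebraic observation that replaces it is the identity
\begin{equation*}
\frac{n\beta-\alpha}{n-2}-\beta=\frac{2\beta-\alpha}{n-2}=-\frac{\rho_1}{n-2}<0 ,
\end{equation*}
which will drive the whole argument.

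I would then proceed by contradiction: suppose $w$ is unbounded on $[1,\infty)$. Since $v>0$ and $w\in C^1$, the running maximum $M(r)=\max_{1\le s\le r}w(s)$ is continuous, nondecreasing, and (under the contradiction hypothesis) $M(r)\to\infty$ as $r\to\infty$. For each $L>M(1)=w(1)$, set $r_L=\inf\{r\ge 1:M(r)\ge L\}$; then $r_L>1$, $w<L$ on $[1,r_L)$, hence by continuity $w(r_L)=M(r_L)=L$ and $w'(r_L)\ge 0$, and since $M$ is finite on every bounded interval one has $r_L\to\infty$ as $L\to\infty$. Evaluating the displayed identity at $r=r_L$, using $\int_1^{r_L}\rho^{n-3}w(\rho)\,d\rho\le L\int_1^{r_L}\rho^{n-3}\,d\rho\le\frac{L\,r_L^{n-2}}{n-2}$ (valid because $n\ge 3$), together with $w(r_L)=L$ and $w'(r_L)\ge 0$, I obtain
\begin{equation*}
0\le\frac{r_L w'(r_L)}{w(r_L)}\le 2-\beta L+\frac{(n\beta-\alpha)L}{n-2}+\frac{|a_6|}{r_L^{n-2}}=2-\frac{\rho_1}{n-2}\,L+\frac{|a_6|}{r_L^{n-2}} .
\end{equation*}
Letting $L\to\infty$ (so that $r_L\to\infty$ and the last term vanishes) forces the right-hand side to $-\infty$, a contradiction. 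Hence $w$ is bounded on $[1,\infty)$, which is precisely \eqref{eq-upper-bound-of-V-equal-v-and-r-2}.

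The only delicate point I anticipate is the elementary bookkeeping around the \emph{record} radii $r_L$: one must check carefully that $M$ is continuous and nondecreasing, that the infimum defining $r_L$ is attained with $w(r_L)=L$, that $w'(r_L)\ge 0$ because $r_L$ is the place where the running maximum is freshly achieved, and that $r_L\to\infty$ as $L\to\infty$. Once this is in place the contradiction is a one-line sign computation. Note that, in contrast to the case $\beta\le\beta_1^{(0)}$ treated in Lemma~\ref{cf-Lemma-2-5-of-cite-Hs2}, this argument uses neither the monotonicity of $v$ (Corollary~\ref{v'<0-cor}) nor any lower bound on $w$.
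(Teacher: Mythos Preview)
Your argument is correct and in fact cleaner than the paper's. Both proofs start from the same first-order identity (your form \eqref{eq-for-w-after-integration-once-23} is identical to the paper's \eqref{eq-for-u-after-integration-once} once one rewrites $\rho^{n-3}w(\rho)=\rho^{n-1}v(\rho)$ and notes $a_5=a_6$), and both exploit the algebraic fact $\frac{n\beta-\alpha}{n-2}-\beta=-\frac{\rho_1}{n-2}<0$. The difference is in how the integral term is controlled. The paper works on the entire set $A=\{r\ge 1:w'(r)\ge 0\}$ and uses the lower bound $w\ge C_1$ from Lemma~\ref{cf-Lemma-2-4-of-Hs2} together with a l'Hospital argument to obtain $\limsup_{r\in A,\,r\to\infty}\frac{\int_1^r z^{n-1}v\,dz}{r^n v(r)}\le\frac{1}{n-2}$, and from this deduces that $w$ is bounded on $A\cap[R_1,\infty)$; it then finishes by the ``$w'<0$ off $A$'' observation. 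Your running-maximum construction instead singles out record radii $r_L$ at which, in addition to $w(r_L)=L$ and $w'(r_L)\ge 0$, one has the pointwise bound $w\le L$ on $[1,r_L]$; this lets you estimate the integral term trivially by $\frac{(n\beta-\alpha)L}{n-2}$, with no appeal to l'Hospital and, as you note, no use of the lower bound on $w$ or the monotonicity of $v$. So your route is genuinely more self-contained; the paper's route has the minor advantage of giving an explicit a~priori bound $w\le C_3$ on $A$ rather than a proof by contradiction.
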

\begin{proof}
The proof of the lemma is similar to the proof of Lemma 2.6 of \cite{Hs4}. For the sake of completeness we will give a sketch of the proof here.
Let $A=\left\{r\in[1,\infty):w'(r)\geq 0\right\}$. If there exists a constant $R_0>1$ such that $A\cap[R_0,\infty)=\emptyset$. Then $w'(r)<0$ for all $r\geq R_0$ and \eqref{eq-upper-bound-of-V-equal-v-and-r-2} holds with $C_0=\max_{1\geq r\geq R_0}w(r)$.

We next suppose that $A\cap[R_0,\infty)\neq\emptyset$ for any $R_0>1$.  
By Lemma \ref{cf-Lemma-2-4-of-Hs2} and the l'Hospital rule, 
\begin{equation*}
\limsup_{r\in A,\,r\to\infty}\frac{\int_1^rz^{n-1}v(z)\,dz}{r^nv(r)}=\limsup_{r\in A,\,r\to\infty}\frac{\int_1^rz^{n-1}v(z)\,dz}{r^{n-2}w(r)}\leq \limsup_{r\in A,\,r\to\infty}\frac{r^{n-1}v(r)}{(n-2)r^{n-3}w(r)+r^{n-2}w'(r)}\le\frac{1}{n-2}.
\end{equation*}
Hence there exists a constant $R_1>1$ such that
\begin{equation}\label{v-l1-upper-bd}
\int_1^rz^{n-1}v(z)\,dz\le\left(\frac{1}{n-2}+\frac{\rho_1}{2(n-2)(n\beta-\alpha)}\right)r^nv(r)\quad\forall r\ge R_1, r\in A.
\end{equation}
By \eqref{eq-for-u-after-integration-once} and \eqref{v-l1-upper-bd} for any  $r\geq R_1,r\in A$,
\begin{equation}\label{rv'/v-upper-bd}
\begin{aligned}
\frac{rv'(r)}{v(r)}&\le\frac{a_5}{r^{n-2}}-\beta r^2v(r)+(n\beta-\alpha)\left(\frac{1}{n-2}+\frac{\rho_1}{2(n-2)(n\beta-\alpha)}\right)r^2v(r)
\leq \frac{a_5}{R_1^{n-2}}-\frac{\rho_1}{2(n-2)}w(r).
\end{aligned}
\end{equation}
where $a_5$ is given by \eqref{a5-defn}. Hence by \eqref{rv'/v-upper-bd},
\begin{align}
0\leq &w'(r)=\frac{2w(r)}{r}\left(1+\frac{1}{2}\frac{rv'(r)}{v(r)}\right)\leq \frac{2w(r)}{r}\left(1+\frac{a_5}{2R_1^{n-2}}-\frac{\rho_1}{4(n-2)}w(r)\right)\quad \forall r\geq R_1,r\in A\notag\\
\Rightarrow \quad w(r)&\leq C_3 \qquad\qquad\qquad\forall r\geq R_1,r\in A\label{w-upper-bd-2}
\end{align}
for some constant $C_3>0$.
Since $w'(r)<0$ for any $r\in[R_1,\infty)\bs A$, by \eqref{w-upper-bd-2} and the same argument as the proof of Lemma 2.6 of \cite{Hs4}
\eqref{eq-upper-bound-of-V-equal-v-and-r-2} follows.
\end{proof}

\begin{proof}[\textbf{Proof of Theorem \ref{thm-behaviour-of-v-m-at-infty}}]
If $n\beta>\alpha$, by Corollary \ref{cf-Corollary-2-3-of-cite-Hs2}, Lemma \ref{cf-Lemma-2-4-of-Hs2} and Lemma \ref{cf-Lemma-2-6-of-cite-Hs2}, \eqref{eq-limit-of-r-2-u-r-at-infty-00} follows. If $\alpha\geq n\beta$, by Lemma \ref{cf-Lemma-2-2-of-cite-Hs3}, Lemma \ref{cf-Lemma-2-5-of-cite-Hs2} and Corollary \ref{cf-Corollary-2-3-of-cite-Hs2}, \eqref{eq-limit-of-r-2-u-r-at-infty-00} follows.
\end{proof}

\section{Singular limits of solutions}
\setcounter{equation}{0}
\setcounter{thm}{0}

In this section we will prove the singular limits of solutions of \eqref{fde} and \eqref{elliptic-eqn} as $m\to 0^+$. We first start with a lemma.

\begin{lemma}\label{lemma-upper-bound-of-radially-symmetric-solution-1}
Let $n\geq 3$, $0<\2{m}_0<\frac{n-2}{n}$, $\rho_1>0$, $\lambda>0$, $\beta\ge\beta_0^{(\2{m}_0)}$ and $\alpha_m=\frac{2\beta+\rho_1}{1-m}$. For any $0<m<\2{m}_0$, let $v^{(m)}$ be the radially symmetric solution 
of \eqref{elliptic-eqn} in $\R^n\setminus\{0\}$ which satisfies \eqref{blow-up-rate-at-x=0} given by Theorem \ref{elliptic-eqn-existence-thm}.
Then there exists a constant $m_0\in (0,\2{m}_0)$ such that 
\begin{equation}\label{eq-condition-of-g-lambda-radially-symmetric-solution-by-inequality-2}
\lambda^{-\frac{\rho_1}{(1-m)\beta}} \leq r^{\frac{\alpha_m}{\beta}}v^{(m)}(r)\leq \lambda^{-\frac{\rho_1}{(1-m)\beta}} \exp\left(C_m\lambda^{\frac{\rho_1}{\beta}}r^{\frac{\rho_1}{\beta}}\right) \qquad \forall |x|=r>0, 0<m<m_0
\end{equation}
holds where 
\begin{equation}\label{eq-constant-C-0-of-upper-bound-of-g-lambda}
C_m=\frac{\alpha_m}{\rho_1\beta}\left(n-2-\frac{m\alpha_m}{\beta}\right).
\end{equation}
\end{lemma}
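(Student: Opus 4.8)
The plan is to read off the lower bound from Section~2 and to obtain the upper bound through a comparison argument for the logarithmic derivative of the rescaled profile $\overline{w}$. For the lower bound, note that $\beta_0^{(\cdot)}$ is strictly increasing on $\left(0,\tfrac{n-2}{n}\right)$, so for every $0<m<\2{m}_0$ one has $\beta\ge\beta_0^{(\2{m}_0)}>\beta_0^{(m)}$, and Lemma~\ref{lem-strictly-positivity-of-overline-q-rho-over-zeor-to-infty} applies to $v^{(m)}$; its conclusion \eqref{vm-lower-bd5} reads $r^{\alpha_m/\beta}v^{(m)}(r)\ge\lambda^{-\rho_1/((1-m)\beta)}$, which is exactly the left-hand inequality in \eqref{eq-condition-of-g-lambda-radially-symmetric-solution-by-inequality-2} and needs no smallness of $m$.

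For the upper bound I would work with $h(\rho):=\overline{w}_{\rho}(\rho)/\overline{w}(\rho)$. Lemma~\ref{lem-strictly-positivity-of-overline-q-rho-over-zeor-to-infty} gives $\overline{w}_{\rho}>0$, so $\overline{w}$ is strictly increasing and, by \eqref{q-bar-value-x=0}, $\overline{w}(\rho)>\lambda^{-\rho_1/((1-m)\beta)}$, whence $\overline{w}(\rho)^{1-m}>\lambda^{-\rho_1/\beta}$ for $\rho>0$. By \eqref{eq-for-overline-q-corresponding-to-v-not-u}, $h$ solves
\[ h_{\rho}+mh^{2}+\frac{a_1}{\rho}\,h+\frac{a_2}{\rho^{2}}\,\overline{w}^{\,1-m}h=\frac{a_3}{\rho^{2}},\qquad\rho>0, \]
where $a_2>0$ and $a_3>0$ (the latter because $\beta>\beta_0^{(m)}$), and, writing $a_1=\frac{(n-2)\beta+\rho_1-2m\alpha_m}{\rho_1}$, also $a_1>0$ as soon as $m<\frac{(n-2)\beta+\rho_1}{(n+2)\beta+3\rho_1}$; fix $m_0\in(0,\2{m}_0)$ below this threshold. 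For $0<m<m_0$, discarding the nonnegative terms $mh^{2}$ and $\frac{a_1}{\rho}h$ and using $\overline{w}^{\,1-m}>\lambda^{-\rho_1/\beta}$ yields
\[ h_{\rho}(\rho)\le\frac{1}{\rho^{2}}\Bigl(a_3-a_2\lambda^{-\rho_1/\beta}\,h(\rho)\Bigr),\qquad\rho>0. \]
Set $L:=\frac{a_3}{a_2}\lambda^{\rho_1/\beta}$; a direct computation from \eqref{a1-2-3-4-defn} and \eqref{eq-constant-C-0-of-upper-bound-of-g-lambda} gives $L=C_m\lambda^{\rho_1/\beta}$, and by Lemma~\ref{lem-limit-of-w-rho-1-over-overline-q--rgo} together with \eqref{q-bar-value-x=0}, $h$ extends continuously to $[0,\infty)$ with $h(0)=L$.

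The key step is to prove $h(\rho)\le L$ for all $\rho>0$. If this fails, choose $\rho_1'>0$ with $h(\rho_1')>L$ and let $(\rho_\ast,\rho_1']$ be the maximal interval to the left of $\rho_1'$ on which $h>L$, so that either $\rho_\ast>0$ and $h(\rho_\ast)=L$, or $\rho_\ast=0$ and $h(\rho)\to L$ as $\rho\to0^{+}$. On $(\rho_\ast,\rho_1']$ we have $h>L$, hence $a_3-a_2\lambda^{-\rho_1/\beta}h<a_3-a_2\lambda^{-\rho_1/\beta}L=0$, so the last displayed inequality forces $h_{\rho}<0$ on all of $(\rho_\ast,\rho_1']$; since $h$ is then strictly decreasing there with left-endpoint value $L$, this gives $h\le L$ on $(\rho_\ast,\rho_1']$, contradicting $h>L$. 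Therefore $h\le L$ on $(0,\infty)$, and integrating gives $\log\overline{w}(\rho)-\log\overline{w}(0)=\int_0^{\rho}h(s)\,ds\le L\rho$, i.e. $\overline{w}(\rho)\le\lambda^{-\rho_1/((1-m)\beta)}\exp\!\bigl(C_m\lambda^{\rho_1/\beta}\rho\bigr)$. Recalling $\rho=r^{\rho_1/\beta}$ and $\overline{w}(\rho)=r^{\alpha_m/\beta}v^{(m)}(r)$, this is exactly the right-hand inequality in \eqref{eq-condition-of-g-lambda-radially-symmetric-solution-by-inequality-2}.

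The main obstacle is the sign bookkeeping in the comparison step: discarding $\frac{a_1}{\rho}h$ requires $a_1\ge0$, i.e. $m<\frac{(n-2)\beta+\rho_1}{(n+2)\beta+3\rho_1}$, which is precisely why one cannot take $m_0=\2{m}_0$ in general; apart from this the only computations are the identity $a_3/a_2=C_m$ and the boundary value $h(0)=C_m\lambda^{\rho_1/\beta}$, which follow from $\overline{w}_{\rho}(0)=A_1\lambda^{-m\rho_1/((1-m)\beta)}$ with $A_1=a_3/a_2$ and $\overline{w}(0)=\lambda^{-\rho_1/((1-m)\beta)}$.
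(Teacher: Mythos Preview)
Your proof is correct. Both arguments establish the same pointwise bound $h\le L$ for the logarithmic derivative of $r^{\alpha_m/\beta}v^{(m)}$ via a first-order differential inequality, but the execution differs in two respects. The paper works with the approximating solutions $v_i$ from the proof of Theorem~\ref{elliptic-eqn-existence-thm} in \cite{Hu4}, uses the variable $s=\log r$ and the quantity $z_i=w_i^{-1}\partial_s w_i$, exploits the explicit initial condition $z_i(-\log i)=0$ coming from the prescribed data of $v_i$ at $r=1/i$, and then passes to the limit $i\to\infty$. You instead work directly with $v^{(m)}$ in the variable $\rho=r^{\rho_1/\beta}$ and replace the initial data of the $v_i$ by the boundary value $h(0)=L$, which you obtain from Lemma~\ref{lem-limit-of-w-rho-1-over-overline-q--rgo}. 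This makes your argument self-contained within the paper (no appeal to the construction in \cite{Hu4}) and, as a by-product, your smallness condition $a_1\ge 0$ is strictly weaker than the paper's $n-2-\tfrac{2m\alpha_m}{\beta}>0$ (indeed $a_1=1+\tfrac{\beta}{\rho_1}\bigl(n-2-\tfrac{2m\alpha_m}{\beta}\bigr)$), so your admissible $m_0$ is at least as large. The trade-off is that your route relies on the finer information $\overline{w}\in C^{1}([0,\infty))$ with $\overline{w}_\rho(0)=A_1\lambda^{-m\rho_1/((1-m)\beta)}$ from Lemma~\ref{lem-limit-of-w-rho-1-over-overline-q--rgo}, whereas the paper's approximation argument needs only the lower bound from Lemma~\ref{lem-strictly-positivity-of-overline-q-rho-over-zeor-to-infty}.
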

\begin{proof}
We will use a modification of the technique of \cite{Hu4} to prove the theorem. Note that
\begin{equation*}
\beta>\frac{m\rho_1}{n-2-nm} \qquad \forall 0<m<\overline{m}_0.
\end{equation*}
By the proof of Theorem 1.1 of \cite{Hu4}, for any $i\in\Z^+$, $0<m<\2{m}_0$, there exists a radially symmetric solution $v_i$ of
\begin{equation*}
\begin{cases}
\begin{aligned}
&\La\phi_m(v)+\alpha_mv+\beta x\cdot\nabla v=0, \quad v>0,\qquad \qquad \mbox{ in }\R^n\bs B_{\frac{1}{i}}\\
&v_i(1/i)=\lambda^{-\frac{\rho_1}{(1-m)\beta}}i^{\frac{\alpha_m}{\beta}}, \\
& v_i'(1/i)=-\frac{\alpha_m}{\beta}\lambda^{-\frac{\rho_1}{(1-m)\beta}}i^{\frac{\alpha_m}{\beta}+1}
\end{aligned}
\end{cases}
\end{equation*}
which satisfies
\begin{equation}\label{eq-aligned-inequality-for-lower-bound-of-g-i}
v_i'(r)<0\quad\forall r\ge \frac{1}{i}\quad\mbox{ and }\quad v_i(r)\ge\lambda^{-\frac{\rho_1}{(1-m)\beta}}r^{-\frac{\alpha_m}{\beta}}\quad\forall r\ge \frac{1}{i}.
\end{equation}
Moreover the sequence $v_i$ has a subsequence which we may assume without loss of generality to be the sequence $v_i$ itself that converges uniformly in $C^2(K)$ for any compact subset $K$ of $\R^n\bs\{0\}$  to $v=v^{(m)}$ as $i\to\infty$. Let  $w_i(r)=r^{\frac{\alpha_m}{\beta}}v_i(r)$, $s=\log r$ and $z_i(s)=w_i^{-1}\frac{\1 w_i}{\1 s}$. Then by the proof of Theorem 1.1 of \cite{Hu4} (cf. \cite{Hs2}) and \eqref{eq-aligned-inequality-for-lower-bound-of-g-i}, 
\begin{align}
&\left(\frac{w_{i,r}}{w_i}\right)_r+\frac{n-1-\frac{2m\alpha_m}{\beta}}{r}\cdot\frac{w_{i,r}}{w_i}+m\left(\frac{w_{i,r}}{w_i}\right)^2+\frac{\beta r^{-1-\frac{\rho_1}{\beta}}w_{i,r}}{w_i^m}=\frac{\alpha_m}{\beta}\cdot\frac{n-2-\frac{m\alpha_m}{\beta}}{r^2} \quad \forall r>1/i,i\in\mathbb{N}\notag\\
\Rightarrow\quad& z_{i,s}+\left(n-2-\frac{2m\alpha_m}{\beta}\right)z_i+mz_i^2+\beta e^{-\frac{\rho_1}{\beta}s}w_i^{1-m}z_i=\rho_1C_m \quad \forall s>-\log i,i\in\mathbb{N}.\label{zi-ineqn1}
\end{align}
We now choose $m_0\in(0,\overline{m}_0)$ such that 
\begin{equation*}
n-2-\frac{2m\alpha_m}{\beta}>0 \qquad \forall 0<m<m_0.
\end{equation*}
Since by the proof of Theorem 1.1. of \cite{Hu4} $z_i(s)=w_i^{-1}\frac{\1 w_i}{\1 s}\ge 0$ for all $s>-\log i$, by \eqref{eq-aligned-inequality-for-lower-bound-of-g-i} and \eqref{zi-ineqn1},
\begin{equation}\label{label-eq-for-z-i}
z_{i,s}+\beta \lambda^{-\frac{\rho_1}{\beta}}e^{-\frac{\rho_1}{\beta}s}z_i\le\rho_1C_m\qquad \forall s>-\log i, \,i\in\Z^+, 0<m<m_0
\end{equation}
By \eqref{label-eq-for-z-i} and an argument similar to the proof of Theorem 1.1 in \cite{Hu4},
\begin{align}
&z_i(s)\leq \frac{\rho_1C_m}{\beta}\lambda^{\frac{\rho_1}{\beta}}e^{\frac{\rho_1}{\beta}s}\qquad\qquad\qquad \forall s>-\log i,\,i\in\Z^+, 0<m<m_0\notag\\
\Rightarrow\quad&w_i(r)\leq \lambda^{-\frac{\rho_1}{(1-m)\beta}}\exp\left\{C_m\lambda^{\frac{\rho_1}{\beta}}r^{\frac{\rho_1}{\beta}}\right\} \qquad \forall r>1/i,\,i\in Z^+, 0<m<m_0\nonumber\\
\Rightarrow\quad& v_i(r)\leq \lambda^{-\frac{\rho_1}{(1-m)\beta}}r^{-\frac{\alpha_m}{\beta}}\exp\left\{C_m\lambda^{\frac{\rho_1}{\beta}}r^{\frac{\rho_1}{\beta}}\right\} \quad\forall r>1/i,\,i\in Z^+, 0<m<m_0.\label{vi-uniform-upper-bd}
\end{align}
Letting $i\to\infty$ in \eqref{eq-aligned-inequality-for-lower-bound-of-g-i} and \eqref{vi-uniform-upper-bd}, we get \eqref{eq-condition-of-g-lambda-radially-symmetric-solution-by-inequality-2} and the lemma follows
\end{proof}
\begin{proof}[\textbf{Proof of Theorem \ref{elliptic-singular-limit-thm}}]
Let $m_0\in (0,\2{m}_0)$ be given by Lemma \ref{lemma-upper-bound-of-radially-symmetric-solution-1}.
Let $\{m_i\}_{i=1}^{\infty}$, $0<m_i<m_0$ for all $i\in\Z^+$, be a sequence such that $m_i\to 0$ as $i\to\infty$. Let $R_2>R_1>0$. By \eqref{eq-condition-of-g-lambda-radially-symmetric-solution-by-inequality-2}, 
\begin{equation}\label{vm-upper-lower-bd1}
M_1(R_2)\le v^{(m)}(x)\le M_2(R_1,R_2) \qquad \forall R_1\le |x|\le R_2
\end{equation}
where 
\begin{equation*}
\left\{\begin{aligned}
&M_1(R_2)=\min\left(\lambda^{-\frac{n\rho_1}{2\beta}}, \lambda^{-\frac{\rho_1}{\beta}}\right)\min\left(R_2^{-\frac{n}{2}\left(2+\frac{\rho_1}{\beta}\right)},R_2^{-\left(2+\frac{\rho_1}{\beta}\right)}\right)\\
&M_2(R_1,R_2)=\max\left(\lambda^{-\frac{n\rho_1}{2\beta}}, \lambda^{-\frac{\rho_1}{\beta}}\right)\max\left(R_1^{-\frac{n}{2}\left(2+\frac{\rho_1}{\beta}\right)},R_1^{-\left(2+\frac{\rho_1}{\beta}\right)}\right)\exp\left(\frac{n(n-2)(2\beta+\rho_1)}{2\rho_1\beta}\lambda^{\frac{\rho_1}{\beta}}R_2^{\frac{\rho_1}{\beta}}\right).
\end{aligned}\right.
\end{equation*}
By \eqref{vm-upper-lower-bd1} and the mean value theorem, for any $0<m<m_0$ there exists $r_m\in (1,2)$ such that
\begin{equation}\label{vm'-uniformly-bd}
|(v^{(m)})'(r_m)|=|v^{(m)}(2)-v^{(m)}(1)|\le 2M_2(1,2).
\end{equation}
Multiplying \eqref{elliptic-eqn} by $r^{n-1}$ and integrating over $(r_m,r)$, $R_1\le r\le R_2$,
\begin{align}\label{vm-integral-eqn2}
&r^{n-1}(v^{(m)}(r))^{m-1}(v^{(m)})'(r)\notag\\
=&r_m^{n-1}(v^{(m)}(r_m))^{m-1}(v^{(m)})'(r_m)+\beta r_m^nv^{(m)}(r_m)-\beta r^nv^{(m)}(r)+(n\beta-\alpha_m)\int_{r_m}^{r}v^{(m)}(\rho)\rho^{n-1}\,d\rho.
\end{align}
By \eqref{vm-upper-lower-bd1},  \eqref{vm'-uniformly-bd} and \eqref{vm-integral-eqn2}, for any $R_2>R_1>0$ there exists a constant $M_3(R_1,R_2)>0$ such that
\begin{align}
&|(v^{(m)})'(r)|\le M_3(R_1,R_2)\qquad \qquad \forall R_1\le r\le R_2, 0<m<m_0\label{vm'-bd2}\\
\Rightarrow\quad&|v^{(m)}(r_1)-v^{(m)}(r_2)|\le M_3(R_1,R_2)|r_1-r_2|\quad\forall r_1,r_2\in [R_1,R_2], 0<m<m_0.\label{vm'-uniformly-bd2}
\end{align}
By \eqref{elliptic-eqn}, \eqref{vm-upper-lower-bd1} and \eqref{vm'-bd2}, for any $R_1\le r\le R_2$ and $0<m_0<\2{m}_0$,
\begin{align}
&(v^{(m)})''(r)=(1-m)(v^{(m)}(r))^{-1}(v^{(m)})'(r)^2-\alpha v^{(m)}(r)^{2-m}\notag\\
&\qquad \qquad \qquad \qquad -\beta rv^{(m)}(r)^{1-m}(v^{(m)})'(r)-(n-1)r^{-1}(v^{(m)})'(r)
\label{vm''-eqn}\\
\Rightarrow\quad&|(v^{(m)})''(r)|\le M_4(R_1,R_2)\qquad \qquad \forall R_1\le r\le R_2, 0<m<m_0\notag\\
\Rightarrow\quad&|(v^{(m)})'(r_1)-(v^{(m)})'(r_2)|\le M_4(R_1,R_2)|r_1-r_2|\quad\forall r_1,r_2\in [R_1,R_2], 0<m<m_0.\label{vm''-bd1}
\end{align}
for some constant $M_4(R_1,R_2)>0$. By differentiating \eqref{elliptic-eqn} with respect to $r>0$ and repeating the above argument, there exists a constant $M_5(R_1,R_2)>0$ such that
\begin{equation}\label{vm'''-bd1}
|(v^{(m)})'''(r)|\le M_5(R_1,R_2)\quad\mbox{ and }\quad |(v^{(m)})''(r_1)-(v^{(m)})''(r_2)|\le M_5(R_1,R_2)|r_1-r_2|\quad\forall r,r_1,r_2\in [R_1,R_2]
\end{equation} 
holds for any $0<m<m_0$.
By \eqref{vm-upper-lower-bd1}, \eqref{vm'-bd2}, \eqref{vm'-uniformly-bd2}, \eqref{vm''-bd1} and \eqref{vm'''-bd1}, the sequence $\{v^{(m_i)}\}_{i=1}^{\infty}$ is equi-Holder continuous in $C^2(K)$ for any compact subset $K$ of $\R^n\setminus\{0\}$. By the Ascoli Theorem and a diagonalization argument the sequence $\{v^{(m_i)}\}_{i=1}^{\infty}$ has a subsequence which we may assume without loss of generality to be the sequence itself that converges uniformly in $C^2(K)$ for any compact subset $K$ of $\R^n\setminus\{0\}$ to some positive function $v\in C^2(\R^n\setminus\{0\})$ as $i\to\infty$.

Putting $m=m_i$ in \eqref{vm''-eqn} and letting $i\to\infty$, 
\begin{equation*}
v''(r)=(v(r))^{-1}v'(r)^2-\alpha v(r)^{2-1}-\beta v(r)v'(r),\quad v>0,\quad\mbox{ in }\R^n\setminus\{0\}
\end{equation*}
and hence $v$ satisfies \eqref{elliptic-log-eqn}.
Letting $m=m_i\to 0$ in \eqref{eq-condition-of-g-lambda-radially-symmetric-solution-by-inequality-2}, 
\begin{align*}
&\lambda^{-\frac{\rho_1}{\beta}}\leq |x|^{\frac{\alpha}{\beta}}v(x)\leq \lambda^{-\frac{\rho_1}{\beta}} \exp\left(C_0\lambda^{\frac{\rho_1}{\beta}}|x|^{\frac{\rho_1}{\beta}}\right) \qquad \forall x\in\R^n\bs\{0\}\\
\Rightarrow \quad& \lim_{|x|\to 0}|x|^{\frac{\alpha}{\beta}}v(x)=\lambda^{-\frac{\rho_1}{\beta}}
\end{align*}
where $C_0=\frac{(2\beta+\rho_1)(n-2)}{\rho_1\beta}$.
Then by Theorem \ref{uniqueness-thm} $v$ is the unique solution of \eqref{elliptic-log-eqn} which satisfies \eqref{log v-soln-x=0-rate}. Since the sequence $\{m_i\}_{i=1}^{\infty}$ is arbitrary, $v^{(m)}$ converges uniformly in $C^2(K)$ for any compact subset of $\R^n\bs\{0\}$ to the unique solution $v$ of \eqref{elliptic-log-eqn} which satisfies  \eqref{log v-soln-x=0-rate} as $m\to 0^+$ and the theorem follows.
\end{proof}

\begin{proof}[\textbf{Proof of Theorem \ref{singular-log-diffusion-eqn-uniqueness-thm}:}]
We will use a modification of the proof of Lemma 2.5 of \cite{Hu2} to prove the theorem.
Let $h\in C^{\infty}_0(\R^n)$, $0\leq h\leq 1$, $h(x)=1$ for $|x|\leq 1$ and $h(x)=0$ for $|x|\geq 2$. Let $\eta(x)=h(x)^{4}$ and $\eta_R(x)=\eta(x/R)$ for any  $R>0$. For any $R>3\epsilon>0$, let 
\begin{equation*}
\eta_{\epsilon,R}(x)=(1-\eta(x/\epsilon))\eta_{R}(x).
\end{equation*}
Then
\begin{equation*}
\left\{\begin{aligned}
&\eta_{\epsilon,R}=0 \qquad\,\forall |x|\le\epsilon\mbox{ or } |x|\ge 2R\\
&\eta_{\epsilon,R}(x)=1\quad\forall 2\epsilon\le |x|\le R
\end{aligned}\right.
\end{equation*}
and
\begin{equation}\label{eq-behaviour-of-La-eta-epsilon-R-on-epsion-and-R-region}
\left|\La\eta_{\epsilon,R}(x)\right|\leq \frac{C_1}{\epsilon^2} \qquad \forall \epsilon\le |x|\le 2\epsilon, \qquad \qquad \left|\La\eta_{\epsilon,R}(x)\right|\leq \frac{C_1}{R^2} \qquad \forall R\le |x|\le 2R
\end{equation}
for some constant $C_1>0$. By Kato's inequality \cite{Ka},
\begin{align}
\frac{\partial}{\partial t}\int_{\R^n}\left(u_1-u_2\right)_+(x,t)\eta_{\epsilon,R}(x)\,dx\le&\int_{\R^n}\left(\log u_1-\log u_2\right)_+(x,t)\La\eta_{\epsilon,R}(x)\,dx\notag\\
\le&\frac{C_1}{\epsilon^2}\int_{\epsilon\le |x|\leq 2\epsilon}\left(\log u_1-\log u_2\right)_+(x,t)\,dx\notag\\
&\qquad +\int_{R\leq |x|\leq 2R}\left(\log u_1-\log u_2\right)_+(x,t)\left|\La \eta_{R}(x)\right|\,dx
\label{eq-aligned-after-kato-inequality-wiht-eta-epsilon-R-2}
\end{align}
By \eqref{log v-soln-x=0-rate} and Lemma \ref{lem-strictly-positivity-of-overline-q-rho-over-zeor-to-infty} there exists a constant $\epsilon_1>0$ such that
\begin{equation}\label{eq-bound-of-u-lambda-sb-i-by-1-over-2-lambda-to0-something-andp2-2-lambda-to-something}
\lambda_i^{-\frac{\rho_1}{\beta}}\le|x|^{\frac{\alpha}{\beta}}v_{\lambda_i}(x)\le 2\lambda_i^{-\frac{\rho_1}{\beta}},\qquad \forall |x|\leq \epsilon_1,i=1,2.
\end{equation}
Then by \eqref{eq-trapping-solution-u-i-between-V-lambda-1-and-V-lambda-2} and \eqref{eq-bound-of-u-lambda-sb-i-by-1-over-2-lambda-to0-something-andp2-2-lambda-to-something},
\begin{align}
(\log u_1-\log u_2)_+(x,t)\le&\log\left(2\lambda_2^{-\frac{\rho_1}{\beta}}\left((T-t)|x|\right)^{-\alpha/\beta}\right)-\log\left(\lambda_1^{-\frac{\rho_1}{\beta}}\left((T-t)|x|\right)^{-\alpha/\beta}\right)\notag\\
\le&\frac{\rho_1}{\beta}\log\left(\frac{\lambda_1}{\lambda_2}\right)+\log 2 \qquad  \forall |x|\leq \epsilon_1/T^{\beta}, 0<t<T\notag\\
\Rightarrow\quad\left|\frac{1}{\epsilon^2}\int_{\epsilon\le |x|\le 2\epsilon}\left(\log u_1-\log u_2\right)_+(x,t)\,dx\right|
\le&2^n\left(\frac{\rho_1}{\beta}\log\left(\frac{\lambda_1}{\lambda_2}\right)+\log 2\right)\omega_n\epsilon^{n-2}\quad \forall 0<\epsilon\le\frac{\epsilon_1}{2T^{\beta}},0<t<T\notag\\
\to&0 \qquad \qquad\forall 0<t<T\qquad \qquad \mbox{ as }\epsilon\to 0\label{integral-near-origin-goes-to-0}
\end{align}
where $\omega_n$ is the surface area of the unit sphere $S^{n-1}$ in $\R^n$. Letting $\epsilon\to 0$ in \eqref{eq-aligned-after-kato-inequality-wiht-eta-epsilon-R-2}, by \eqref{integral-near-origin-goes-to-0} we get
\begin{equation}\label{eq-process-for-L-1-contraction-after-letting-epsilon-to-zero-1}
\frac{\partial}{\partial t}\int_{\R^n}\left(u_1-u_2\right)_+(x,t)\eta_{R}(x)\,dx
\leq \int_{\R^n}\left(\log u_1-\log u_2\right)_+(x,t)\left|\La \eta_{R}(x)\right|\,dx\quad \forall 0<t<T.
\end{equation}
By Theorem \ref{thm-behaviour-of-v-m-at-infty} there exists a constant $C_3>0$ such that
\begin{equation}\label{v-lambda-i-lower-bd-5}
v_{\lambda_i}(x)\ge C_3|x|^{-2}\quad\forall |x|\ge 1, i=1,2.
\end{equation}
By \eqref{eq-trapping-solution-u-i-between-V-lambda-1-and-V-lambda-2} and \eqref{v-lambda-i-lower-bd-5},
\begin{equation}\label{parabolic-solns-uniform-lower-bd}
u_i(x,t)\ge (T-t)^{\alpha}\cdot C_3\left((T-t)^{\beta}|x|\right)^{-2}=C_3(T-T_1)|x|^{-2}\quad\forall |x|\ge (T-T_1)^{-\beta}, 0<t\le T_1<T.
\end{equation}
By \eqref{eq-process-for-L-1-contraction-after-letting-epsilon-to-zero-1}, \eqref{parabolic-solns-uniform-lower-bd} and the same argument as the
proof of Lemma 2.5 of \cite{Hu2} for any $T_1\in (0,T)$ we get $u_1\leq u_2$ in $\left(\R^n\bs\{0\}\right)\times(0,T_1)$. Hence \eqref{u1<u2-ineqn} holds.

If $u_{0,1}=u_{0,2}$ and both $u_1$, $u_2$ are solutions of \eqref{eq-log-diffusion-equation-in-R-to-n-spacse-bs-zero-324r5}  in $\left(\R^n\bs\{0\}\right)\times(0,T)$ which satisfy \eqref{eq-trapping-solution-u-i-between-V-lambda-1-and-V-lambda-2}, then we also have $u_2\leq u_1$ in $\left(\R^n\bs\{0\}\right)\times(0,T)$. Hence $u_1=u_2$ in $\left(\R^n\bs\{0\}\right)\times(0,T)$ and the theorem follows.
\end{proof}

\begin{proof}[\textbf{Proof of Theorem \ref{parabolic-singular-limit-thm1}}:]
Let $m_0\in (0,\2{m}_0)$ by given by Lemma \ref{lemma-upper-bound-of-radially-symmetric-solution-1}. Then by \eqref{self-similar-soln-defn1} and Lemma \ref{lemma-upper-bound-of-radially-symmetric-solution-1}, for any  $x\in\R^n\setminus\{0\}$, $0<t<T$, $0<m<m_0$, $i=1,2$,
\begin{align}
& \lambda_i^{-\frac{1}{(1-m)\beta}}|x|^{-\frac{\alpha_m}{\beta}} \leq V_{\lambda_i}^{(m)}(x,t)\leq \lambda_i^{-\frac{1}{(1-m)\beta}}|x|^{-\frac{\alpha_m}{\beta}} \exp\left(C_m\lambda_i^{\frac{1}{\beta}}T|x|^{\frac{1}{\beta}}\right)\label{v-lambda-i-upper-lower-bd3}\\
\Rightarrow \quad &\underline{\lambda}_i|x|^{-\frac{\alpha_m}{\beta}} \leq V_{\lambda_i}^{(m)}(x,t)\leq \overline{\lambda}_i|x|^{-\frac{\alpha_m}{\beta}}\exp\left(\overline{C}_0\lambda_i^{\frac{1}{\beta}}T|x|^{\frac{1}{\beta}}\right)
\label{eq-upper-and-lower-bound-of-self-similar-sols-34}
\end{align}
where $C_m$ is given by \eqref{eq-constant-C-0-of-upper-bound-of-g-lambda} and
\begin{equation*}
\overline{\lambda}_i=\max\left(\lambda_i^{-\frac{n}{2\beta}},\lambda_i^{-\frac{1}{\beta}}\right), \qquad  \underline{\lambda}_i=\min\left(\lambda_i^{-\frac{n}{2\beta}},\lambda_i^{-\frac{1}{\beta}}\right)\qquad \mbox{and}\qquad \overline{C}_0=\frac{n(n-2)(2\beta+1)}{2\beta}.
\end{equation*}
By \eqref{eq-compare-between-radially-symmetric-sols-and-sollutions} and \eqref{eq-upper-and-lower-bound-of-self-similar-sols-34},
\begin{equation}\label{eq-upper-and-lower-bound-of-u-m-by-self-similar-sols-34}
\underline{\lambda}_1\min\left(|x|^{-\frac{n}{2}\left(2+\frac{1}{\beta}\right)},|x|^{-\left(2+\frac{1}{\beta}\right)}\right)\leq u^{(m)}(x,t)\leq \overline{\lambda}_2\max\left(|x|^{-\frac{n}{2}\left(2+\frac{1}{\beta}\right)},|x|^{-\left(2+\frac{1}{\beta}\right)}\right)\exp\left(\overline{C}_0\lambda_2^{\frac{1}{\beta}}T|x|^{\frac{1}{\beta}}\right)
\end{equation}
holds for any  $x\in \R^n\setminus\{0\}$, $0<t<T$ and  $0<m<m_0$.
Let $\{m_i\}_{i=1}^{\infty}\subset(0,m_0)$ be a sequence of positive numbers such that $m_i\to 0$ as $i\to\infty$. By \eqref{eq-upper-and-lower-bound-of-u-m-by-self-similar-sols-34} the equation \eqref{fde} for the sequence $\{u^{(m_i)}\}_{i=1}^{\infty}$ is uniformly parabolic on every compact subset of 
$(\R^n\setminus\{0\})\times(0,T)$. By the Schauder estimates for parabolic equations \cite{LSU}, the sequence $u^{(m_i)}(x,t)$ is equi-bounded in $C^{2+\theta,1+\frac{\theta}{2}}(K)$ for some $\theta\in(0,1)$ for any  compact subset $K$ of $(\R^n\bs\{0\})\times(0,T)$. Hence by the Ascoli theorem and a diagonalization argument the sequence $u^{(m_i)}(x,t)$ has a subsequence which we may assume without loss of generality to be the sequence itself that converges uniformly in $C^{2+\theta,1+\frac{\theta}{2}}(K)$ for any  compact subset $K$ of $(\R^n\bs\{0\})\times(0,T)$ as $i\to\infty$ to a positive function  $u(x,t)\in C^2(\R^n\setminus\{0\})$ 
which by \eqref{eq-compare-between-radially-symmetric-sols-and-sollutions} and Theorem \ref{elliptic-singular-limit-thm} satisfies
\eqref{eq-solution-u-trapped-by-V-sub-is}.

Putting $m=m_i$ in \eqref{v-lambda-i-upper-lower-bd3} and letting $i\to\infty$, by Theorem \ref{elliptic-singular-limit-thm},
\begin{equation}\label{eq-upper-lower-of-solution-u-of-log-diffusion}
\lambda_i^{-\frac{1}{\beta}}|x|^{-\frac{\alpha}{\beta}} \leq V_{i}(x,t)\leq \lambda_i^{-\frac{1}{\beta}}|x|^{-\frac{\alpha}{\beta}}\exp\left(\2{C}_0\lambda_i^{\frac{1}{\beta}}T|x|^{\frac{1}{\beta}}\right) \qquad \forall x\in\R^n\setminus\{0\},0<t<T,i=1,2.
\end{equation} 
By \eqref{eq-upper-and-lower-bound-of-u-m-by-self-similar-sols-34} and the mean value theorem for any $(x,t)\in (\R^n\setminus\{0\})\times (0,T)$ there exists $\xi_i(x,t)\in (0,m_i]$ such that
\begin{equation}\label{eq-aligned-difference-between-u-m-on-x-t-over-m-and-log-u-on-x-t-to=zoer-as-i-to-infty}
\begin{aligned}
&\left|\frac{u^{(m_i)}(x,t)^{m_i}-1}{m_i}-\log u(x,t)\right|\\
=&\left|e^{\xi_i(x,t)\log u^{(m_i)}(x,t)}\log u^{(m_i)}(x,t)-\log u(x,t)\right|\\
\le& e^{\xi_i(x,t)\log u^{(m_i)}(x,t)}\left|\log u^{(m_i)}(x,t)-\log u(x,t)\right|+\left|e^{\xi_i(x,t)\log u^{(m_i)}(x,t)}-1\right|\cdot\left|\log u(x,t)\right|\\
\to& 0 \qquad \qquad \mbox{uniformly on every compact subset of $(\R^n\bs\{0\})\times(0,T)$ as $i\to\infty$}.
\end{aligned}
\end{equation}
Hence putting $m=m_i$ in \eqref{fde}  and letting $i\to\infty$, by \eqref{eq-aligned-difference-between-u-m-on-x-t-over-m-and-log-u-on-x-t-to=zoer-as-i-to-infty} $u$ satisfies \eqref{log-diffusion-eqn}. It remains to prove that $u$ has initial value $u_0$. For any $\psi\in C_0^{\infty}\left(\R^n\bs\{0\}\right)$, we choose constants $R_2>R_1>0$ such that $\supp\psi\subset B_{R_2}\bs B_{R_1}$.
Then
\begin{align}
&\left|\int_{\R^n\bs\{0\}}u^{(m_i)}(x,t)\psi(x)\,dx-\int_{\R^n\bs\{0\}}u_{0,m}(x)\psi(x)\,dx\right|\notag\\
=&\left|\int_{0}^{t}\int_{\R^n\bs\{0\}}u^{(m_i)}_t(x,s)\psi(x)\,dxds\right|
=\left|\int_{0}^{t}\int_{\R^n\bs\{0\}}\left(\frac{u^{(m_i)}(x,s)^{m_i}-(T-s)^{m_i\alpha_{m_i}}}{m_i}\right)\cdot\La\psi(x)\,dxds\right|\notag\\
\le&\left\|\La\psi\right\|_{L^{\infty}}\int_0^t\int_{B_{R_2}\bs B_{R_1}}\left(E_1+E_2\right)\,dxds\qquad \qquad \quad\forall 0<t<T
\label{eq-aligned-eq-for-u-m-andu-sub-m-zero-diffrerence-after-before-letting-m-to-zero}
\end{align}
where
\begin{equation*}
E_k=\left|\frac{V_{\lambda_k}^{(m)}(x,s)^{m_i}-(T-s)^{m_i\alpha_{m_i}}}{m_i}\right|\quad\forall k=1,2.
\end{equation*}
Since
\begin{equation*}
E_k=\left|(T-s)^{m_i\alpha_{m_i}}\right|\left|\frac{v^{(m)}_{\lambda_k}(\left(T-s\right)^{\beta}|x|)^{m_i}-1}{m_i}\right| 
\to\left|\log v_{\lambda_k}\left(\left(T-s\right)^{\beta}x\right)\right|\quad\mbox{ uniformly on }(B_{R_2}\bs B_{R_1})\times(0,T_1)
\end{equation*} 
for any $0<T_1<T$ as $i\to\infty$, letting $i\to\infty$ in \eqref{eq-aligned-eq-for-u-m-andu-sub-m-zero-diffrerence-after-before-letting-m-to-zero}, 
\begin{align}
&\left|\int_{\R^n\bs\{0\}}u(x,t)\psi(x)\,dx-\int_{\R^n\bs\{0\}}u_{0}(x)\psi(x)\,dx\right|\notag\\ 
\le&\left\|\La\psi\right\|_{L^{\infty}}\int_0^t\int_{B_{R_2}\bs B_{R_1}}\left(\left|\log v_{\lambda_1}\left((T-s)^{\beta}x\right)\right|+\left|\log v_{\lambda_2}\left((T-s)^{\beta}x\right)\right|\right)\,dxds\notag\\
\le& C_1\left\|\La\psi\right\|_{L^{\infty}}t \qquad \qquad \qquad \forall 0<t\leq T/2
\label{eq-aligned-eq-for-u-andu-sub-zero-diffrerence-after-letting-m-to-zero}
\end{align}
where
\begin{equation*}
C_1=\max_{\left(\frac{T}{2}\right)^{\beta}R_1\leq|y|\leq T^{\beta}R_2}\left|\log v_{\lambda_1}(y)\right|+\max_{\left(\frac{T}{2}\right)^{\beta}R_1\leq|y|\leq T^{\beta}R_2}\left|\log v_{\lambda_2}(y)\right|.
\end{equation*}
Letting $t\to 0$ in \eqref{eq-aligned-eq-for-u-andu-sub-zero-diffrerence-after-letting-m-to-zero},
\begin{equation}\label{eq-weak-convergence-in-L-1-of-u-to-initial-data}
\lim_{t\to 0}\int_{\R^n\bs\{0\}}u(x,t)\psi(x)\,dx=\int_{\R^n\bs\{0\}}u_{0}(x)\psi(x)\,dx \qquad \forall\psi\in C^{\infty}_0(\R^n\bs\{0\}).
\end{equation}
By \eqref{eq-weak-convergence-in-L-1-of-u-to-initial-data}, any sequence $\left\{t_{k}\right\}_{k=1}^{\infty}$ converging to $0$ as $k\to\infty$ will have a  subsequence $\{t_{k_l}\}_{l=1}^{\infty}$ such that $u(x,t_{k_l})$ converges to $u_0(x)$ for a.e.  $x\in\R^n\setminus\{0\}$ as $l\to \infty$. Then by the Lebesgue Dominated Convergence Theorem,
\begin{equation*}
\lim_{l\to\infty}\int_{R_1\le|x|\le R_2}\left|u(x,t_{k_l})-u_0(x)\right|\,dx=0\quad\forall R_2>R_1>0.
\end{equation*}
Since the sequence $\left\{t_k\right\}_{k=1}^{\infty}$ is arbitrary, $u(\cdot,t)$ converges to $u_0$ in $L_{loc}^1(\R^n)$ as $t\to 0$. Hence $u$ has initial value $u_0$. Thus by Theorem \ref{singular-log-diffusion-eqn-uniqueness-thm} $u$ is the unique solution  of \eqref{eq-cases-aligned-problem-of-parabolic-case-after-m-to-zero}.
Hence $u^{(m)}$ converges uniformly in $C^{2+\theta,1+\frac{\theta}{2}}(K)$  for some constant $\theta\in (0,1)$ and any  compact subset $K$ of $\left(\R^n\bs\left\{0\right\}\right)\times(0,T)$ to the solution $u$ of \eqref{eq-cases-aligned-problem-of-parabolic-case-after-m-to-zero} as $m\to 0^+$ and the theorem follows. 
\end{proof}

\noindent {\bf Acknowledgement:} We would like to thank the referee for the many valuable comments and suggestions on the paper. 
K.M.~Hui was partially supported by the Ministry of Science and Technology of Taiwan (MOST 105-2115-M-001-002). 
Sunghoon Kim was supported by the National Research Foundation of Korea (NRF) grant funded by the Korea government (MSIP, no. 2015R1C1A1A02036548). Sunghoon Kim was also supported by the Research Fund 2016 of The Catholic University of Korea.

\end{document}